\newtheorem{df}{Definition}[section]
\newtheorem{thm}[df]{Theorem}
\newtheorem{pro}[df]{Proposition}
\newtheorem{rema}[df] {Remark}
\begin{document}
\setcounter{page}{1}

\title[Joint spectra of the tensor product representation]{ Joint spectra of the tensor product representation
           of the direct sum of two solvable Lie algebras}

\author{Enrico Boasso}


\begin{abstract}Given two complex Banach spaces $X_1$ and $X_2$,
a tensor product $X_1\tilde{\otimes} X_2$  of $X_1$ 
and $X_2$ in the
sense of [14], two
complex solvable finite dimensional Lie algebras $L_1$ and
$L_2$, and two representations $\rho_i\colon L_i\to
{\rm L}(X_i)$ of the algebras, 
$i=1$, $2$, we consider the Lie algebra
$L=L_1\times L_2$, and the tensor product representation of $L$, 
$\rho\colon L\to
{\rm L}(X_1\tilde{\otimes}X_2)$, $\rho=\rho_1\otimes I
+I\otimes \rho_2$. In this work we study the 
S\l odkowski and the split joint spectra of the representation 
$\rho$, and we describe them in terms of 
the corresponding joint spectra of $\rho_1$ and $\rho_2$. 
Moreover, we study the essential 
S\l odkowski and the essential split joint
spectra of the representation $\rho$, and we describe 
them by means of the corresponding
joint spectra and the corresponding essential joint spectra of $\rho_1$ and $\rho_2$. 
In addition, with similar arguments we describe
all the above-mentioned joint spectra  
for the multiplication representation in an operator ideal between 
Banach spaces in the sense of [14]. 
Finally, we consider nilpotent  
systems of operators, in particular commutative, and we apply 
our descriptions to them. 
\end{abstract}

\maketitle
\tableofcontents

\section{Introduction}

\indent In this work we deal with several joint spectra defined for
representations of complex solvable finite dimensional Lie algebras in
complex Banach spaces. Our main concern is to study 
the behavior of some joint spectra with respect to the procedure 
of passing from two given such representations, $\rho_1\colon L_1
\to L(X_1)$ and $\rho_2\colon L_2\to L(X_2)$, 
to the tensor product representation of the direct sum of the algebras,
$\rho\colon L_1\times L_2\to L(X_1\tilde{\otimes} X_2)$, $\rho=\rho_1
\otimes I+I\otimes \rho_2$, where $X_1\tilde{\otimes}X_2$ is a 
tensor product of the Banach spaces 
$X_1$ and $X_2$ in the sense of [14], and
$I$ denotes the identity operator of both $X_1$ and $X_2$.
In addition, we describe the spectral contributions of  
$\rho_1$ and $\rho_2$ to some joint spectra
of the multiplication 
representation $\tilde{\rho}\colon L_1\times L_2^{op}
\to L(J)$, $\tilde{\rho}(T)=\rho_1(l_1)T+T\rho_2(l_2)$,
where $J\subseteq L(X_2,X_1)$ is an operator ideal
between the Banach spaces $X_1$ and $X_2$ in the sense of [14],
and $L_2^{op}$ is the opposite algebra of $L_2$.
However, in order to accurately present
the problems we are concerned with, we review
how the theory of tensor product is placed within the general theory of 
joint spectra. We first recall some of the best known
joint spectra in the commutative and non-commutative setting and 
their relation with tensor products.\par

\indent Given a commutative complex Banach algebra ${\mathcal A}$ 
with unit element $I$, if $a=(a_1, \ldots ,a_n)\in{\mathcal A}^n $, 
$n\ge 1$, then the joint spectrum of $a$ is defined by 

\begin{align*}
\sigma_{\mathcal A} (a)=&\{(\lambda_1,\ldots ,\lambda_n)\in\Bbb C^n: 
\hbox{ the elements $a_i-\lambda_iI$, $i=1,\ldots ,n,$
generate}\\
 &\hbox{a proper ideal in ${\mathcal A}$}\}.\\\end{align*}

Another well-known formula giving the same set is 
$$
\sigma_{\mathcal A}(a)=\{(f(a_1),\ldots ,f(a_n))\in\Bbb C^n:\hbox{}f\in{\mathcal M}
({\mathcal A})\},
$$
where ${\mathcal M}({\mathcal A})$ is the set of all non-zero multiplicative 
linear functionals of ${\mathcal A}$. \par

\indent The joint spectrum 
$\sigma_{\mathcal A}(a)$ is always a non-void compact subset of 
$\Bbb C^n$. Moreover, the joint spectrum is a fundamental concept
in the theory of commutative Banach algebras, for it provides an
analytic functional calculus for several elements in such an algebra; see
[22], [3], [26] and [2]; for a general account
of the joint spectrum see
[11] and [17].\par     

\indent When ${\mathcal A}$ is a non-commutative unital Banach algebra,  
say ${\mathcal A}=L(X)$, where $X$ is a Banach space, 
one could define the joint spectrum of a commutative $n$-tuple $a=(a_1,\ldots ,a_n)$
in ${\mathcal A}$ 
as the joint spectrum of $a$
relative to a maximal abelian subalgebra ${\mathcal B}$ 
containing $a_1,\ldots , a_n$, $\sigma_{\mathcal B}(a)$.
Unfortunately, the joint spectrum so defined depends very strongly
on the choice of ${\mathcal B}$. Indeed, if we consider 
two maximal abelian subalgebras ${\mathcal B}_1$ and ${\mathcal B}_2$ 
containing $a_i$, $i=1,\ldots n$, unlike the case $n=1$ it is not
generally true that $\sigma_{{\mathcal B}_1}(a)=\sigma_{{\mathcal B}_2}(a)$,
see [1].\par

\indent So far we have considered a Banach algebra convention, i.e.,
all concepts are related to a Banach algebra ${\mathcal A}$. However,
there is another way to introduce joint spectra, the so-called spatial
convention, i.e., the joint spectra are defined for tuples of
commuting operators in the algebras $L(X)$, 
$X$ a Banach space, and in the definitions elements of $X$ are involved.
For a given Banach algebra ${\mathcal A}$, we put $X={\mathcal A}$
and interpret the elements of ${\mathcal A}$ as operators of left multiplication,
i.e., to $a\in{\mathcal A}$ we associate the map $L_a\in L(A)$,
where $L_a(b)=a.b$, $b\in {\mathcal A}$. 
Thus, a joint spectrum defined for commutative tuples of Banach
space operators, $\sigma(\cdot)$,
gives rise to a joint spectrum on ${\mathcal A}$, 
$\sigma(a, {\mathcal A})=
\sigma(L_a)$, where $L_a=(L_{a_1},\ldots ,L_{a_n})$
and $a=(a_1,\ldots ,a_n)$ is a commutative tuple in ${\mathcal A}$.\par

\indent Among the most important joint spectra defined in the
spatial convention, we have the Taylor joint spectrum; see [24] and [11]. 
This joint spectrum is
defined for commuting systems of Banach space operators $T=(T_1,\ldots ,T_n)$,
and it has the advantage that its definition 
depends on the action of the maps
$T_1,\ldots ,T_n$. The Taylor joint spectrum, $\sigma_T(T)$,
is a compact non-void subset of $\Bbb C^n$ and it has several 
additional important properties, such as an analytic functional calculus
and the so-called projection property.
When ${\mathcal A}$ is a commutative Banach 
algebra, if $a=(a_1,\ldots ,a_n)\in
{\mathcal A}^n$, then $\sigma_T(a,{\mathcal A})
=\sigma_{{\mathcal A}}(a)$; see [24] and [11]. 
Therefore, the joint spectrum $\sigma_{\mathcal A}(a)$ can
be thought of as the Taylor joint spectrum $\sigma_T(a,{\mathcal
 A})$.\par

\indent There are many other interesting joint spectra defined
in the spatial convention, for example, the S\l odkowski 
joint spectra, [23], the Fredholm or essential joint spectra,
[15] and [19], and the split and the essential split joint spectra, [13]. 
All these joint spectra are related to the Taylor spectrum and
have similar properties.\par

\indent On the other hand, over the last years some of the 
joint spectra originally introduced for commuting systems of operators
have been extended to the non-commutative case. Indeed,   
the Taylor, the S\l odkowski and the split joint spectra have been extended
to representations of complex solvable finite dimensional Lie algebras in 
complex Banach spaces and their main properties
have been proved; see [5], [7], [16], [20] and [21].\par 

\indent One of the most deeply studied
problems within the theory of joint spectra has been the determination of the
spectral contributions that two commuting systems of operator $S=(S_1, \ldots ,S_n)$
and $T=(T_1,\ldots ,T_m)$ defined in the Banach spaces $X_1$ and $X_2$ respectively,
make to the joint spectra of the system $(S\otimes I, I\otimes T)=(S_1\otimes I, \ldots ,
S_n\otimes I, I\otimes T_1,\ldots ,I\otimes T_m)$ defined in 
$X_1\tilde{\otimes}_{\alpha}X_2$, i.e., the completation of the algebraic tensor
product $X_1\otimes X_2$ with respect to a quasi-uniform crossnorm $\alpha$, and where the symbol $I$
stands for the identity map both in $X_1$ and $X_2$.
For example, if $X_1$ and $X_2$ are Hilbert spaces and
$X_1\overline{\otimes}X_2$ is the canonical completion of $X_1{\otimes}X_2$, then in [10]
the Taylor joint spectrum of $(S\otimes I, 
I\otimes T)$ in $X_1\overline{\otimes} X_2$ was characterized. Indeed, it was proved that
$$
\sigma_T(S\otimes I, I\otimes T)=\sigma_T(S)\times \sigma_T(T);
$$
see the related work [9]. In addition, the
results in [9] and [10] were extended in [27] and [28] 
to Banach spaces and quasi-uniform crossnorms. \par   

\indent Furthermore, in an operator ideal $J\subseteq L(X_2,X_1)$ 
between the Banach spaces $X_1$ and $X_2$, it is possible to consider tuples of left and
right multiplication: $L_S=(L_{S_1},\ldots ,L_{S_n})$ and
$R_T=(R_{T_1},\ldots ,R_{T_m})$ respectively, induced by commuting systems
of operators $S=(S_1,\ldots ,S_n)$ and $T=(T_1,\ldots ,T_m)$ 
defined in $X_1$ and $X_2$ respectively,
where $L_U(A)=UA$ and $R_V(B)=BV$, $U\in L(X_1)$, 
$V\in L(X_2)$ and $A$, $B\in J$ . However, the
tuple $(L_S,R_T)$ is
closely related to the system $(S\otimes I,
I\otimes T^{'})$; see [12], [14]. Indeed, the completion
$H\tilde{\otimes}_{\alpha}H^{'}$ of the algebraic tensor product
of a Hilbert space $H$ and its dual relative to a uniform crossnorm
$\alpha$ can be regarded as an operator ideal in $L(H)$, see [14]. As regards this
identification the operators $S_i\otimes I$ and $I\otimes T_j^{'}$
correspond to the operators $L_{S_i}$ and $R_{T_j}$ respectively,
for $i=1,\ldots ,n$ and $j=1,\ldots ,m$. In particular, the joint spectra
of $(L_S, R_T)$ are closely related to the corresponding joint spectra of
$(S\otimes I ,I\otimes T^{'})$. 
The Taylor joint spectrum and the essential joint spectrum of $(L_S,R_T)$
were studied in the works [12] and [14] in the Hilbert and Banach space
setting respectively.\par

\indent In addition, an axiomatic tensor product was introduced in [14].
This tensor product is general and rich enough to allow, on the
one hand, the description of the Taylor, the split, 
the essential Taylor and the essential split joint spectra of a
system $(S\otimes I, I\otimes T)$ defined in the
tensor product of two Banach spaces and, on the other hand,
the description of all the above-mentioned joint spectra of tuples
of left and right multiplications $(L_S, R_T)$ defined in a class of
operator ideals between Banach spaces introduced in [14]. \par

\indent Some of the main results in [9], [10], [12], [14], [27] and [28]
were extended to the non-commutative setting.    
In fact, the main result in [10] was extended in [6] to solvable Lie algebras of operators
defined in Hilbert spaces, and
in [21] the descriptions in [14] in connection with the Taylor and 
the split joint spectra
of a system $(S\otimes I, I\otimes T)$ and of a 
tuple of left and right multiplications $(L_S, R_T)$ were 
extended to the tensor product representation of the direct
sum of two solvable Lie algebras, and to the multiplication 
representation respectively; see [21, Chapter 3]. This work aims at extending
the central results in [14] and [21, Chapter 3] to other joint spectra in 
the commutative and non-commutative settings.\par  
  
\indent Indeed, one of the main objetives of this work is to 
describe, by
means of the tensor product introduced in [14], the S\l odkowski 
and the split joint spectra of the 
tensor product representation of the 
direct sum of two solvable Lie algebras, and of the multiplication 
representation in an operator ideal between
Banach in the sense of [14]; see sections 5 and 7. 
These descriptions provide an extension from the Taylor
joint spectrum and the usual split
joint spectrum to the
S\l odkowski and the split joint spectra 
of two of the main results in [21, Chapter 3] for the tensor
product introduced in [14]. 
Moreover, we consider nilpotent systems
of operators, in particular commutative, and we 
describe the S\l odkowski and the split joint spectra 
of a system $(S\otimes I , I\otimes T)$, 
and of a tuple of left and right  
multiplications $(L_S, R_T)$ in operator ideals between
Banach spaces in the sense of [14]; see section 5 and 7. 
\par  

\indent Our second main objective 
is to describe the essential S\l odkowski and the essential split joint spectra 
of the tensor product representation of the direct
sum of two solvable Lie algebras and of the multiplication 
representation in an operator 
ideal between Banach spaces in the sense of [14]; see section 6 and 7. 
These results are an extension of the
description proved in [14],
from the essential Taylor and the essential split joint spectra to the
essential S\l odkowski and the essential split joint spectra, 
and from commuting tuples
of operators to representations of solvable Lie algebras.
Furthermore, we consider nilpotent systems 
of operators and we describe the essential S\l odkowski and 
the essential split joint spectra of 
the systems  
mentioned in the last paragragh. 
\par
 
\indent However, in order to prove our second main result, we need 
to introduce the essential S\l odkowski and the essential split joint spectra of
a representation of a complex solvable finite dimensional Lie algebra in a 
complex Banach
space, and to prove the main properties of these joint
spectra; see section 3.\par  

\indent In addition, as an application, in section 8 we describe all the 
above-mentioned 
joint spectra of two particular representations of a nilpotent Lie 
algebra, one in a tensor product of Banach spaces, where the
tensor product is the one introduced in [14], and the
otherone in an operator ideal between Banach spaces
in the sense of [14].\par

\indent This work is organized as follows. In section 2 we recall
the definitions and the main properties of the 
S\l odkowski and 
the split joint spectra; we also include a little review of Lie algebras. In section 3 we introduce the essential
S\l odkowski and the essential split joint spectra, and we prove their main 
properties. In section 4 we recall the axiomatic tensor product introduced
in [14] and we prove some results needed for our main 
theorems. In section 5 we describe the S\l odkowski
and the split joint spectra of the tensor product representation
of the direct sum of two solvable Lie algebras. In section 6 we describe the essential 
S\l odkowski and the essential split joint spectra of the tensor product 
representation of the direct sum of two solvable Lie algebras. 
In section 7 we describe the S\l odkowski, the split,
the essential S\l odkowski and the essential split joint spectra 
of the multiplication representation in
an operator ideal between Banach spaces in the sense of [14].
In addition, in sections 5, 6 and 7 we consider nilpotent 
systems of operators and we obtain descriptions of the corresponding
joint spectra. Finally, in section 8, we apply our main results to 
some representations of nilpotent Lie algebras.\par  

\section{The Taylor, the S\l odkowski and the split joint spectra}

\indent In this section we review the definitions and the main  
properties of the Taylor, the S\l odkowski and the split joint spectra of a 
representation of a  
Lie algebra in a Banach space; see [24], [23], 
[13], [14], [16]
[7], [5], [20] and [21]. However, in order to develop 
a self-contained exposition to a reasonable
extent, we first review  
some well-known facts of Lie algebras used in this 
work. Since we are interested in solvable Lie algebras acting on complex
Banach spaces, we limit our review to this case; 
for a complete exposition see [8]. \par

\indent  A complex Lie algebra is a vector space over the complex 
field $\Bbb C$ provided with a bilinear bracket, named the Lie product, 
$[.,.]\colon L\times L\to L$, which complies with the Lie conditions
$$
[x,x]=0,\hskip1cm [[x,y],z]+[[y,z],x]+[[z,x],y]=0,
$$
for every $x$, $y$ and $z\in L$. The second of these equations is called 
the Jacobi identity. By $L^{op}$ we denote the opposite Lie algebra 
of $L$, i.e., the algebra that  
as a vector space coincides with $L$ and has the bracket
$[x,y]^{op}=-[x,y]=[y,x]$, for $x$ and $y\in L$.
\par

\indent An example of a Lie algebra structure is given by the algebra
of all bounded linear maps defined in a Banach
space $X$, $L(X)$, and the bracket 
$[.,.]\colon L(X)\times L(X)\to L(X)$, $[S,T]=ST-TS$, 
for $S$ and $T\in L(X)$. \par

\indent Given two Lie algebras $L_1$ and $L_2$ with Lie brackets
$[.,.]_1$ and $[.,.]_2$ respectively, a morphism of Lie algebras
$H\colon L_1\to L_2$ is a linear map such that 
$H([x,y]_1)=[H(x),H(y)]_2$, for $x$ and $y\in L_1$. In particular, when $L_2=L(X)$, $X$ a 
Banach space, we say that $H\colon L_1\to L(X)$ is
a representation of $L_1$. \par   

\indent We say that a subspace $I$ of $L$ is a subalgebra when
$[I,I]\subseteq I$, and an ideal when $[I,L]\subseteq I$, where
if $M$ and $N$ are two subsets of $L$, then
$[M,N]$ denotes the set $\{[m,n]: m\in M,\hbox{ } n\in N\}$.
In  particular, $L^2=[L,L]=\{ [x,y]: x,\hbox{ }y\in L\}$
is an ideal of $L$. In addition, we say that a linear map $f\colon L\to \Bbb C$ is a character
when $f(L^{2})=0$, i.e., when $f\colon L\to \Bbb C$ is a Lie morphism.\par

\indent For any Lie algebra $L$ we can consider the following two
series of ideals. The derived series, i.e.
$$
L=L^{(1)}\supseteq L^{(2)}=[L,L]\supseteq L^{(3)}=
[L^{(2)},L^{(2)}]\supseteq \ldots \supseteq L^{(k)}=[L^{(k-1)},
L^{(k-1)}],
$$
and the descending central series, i.e.
$$
L=L^{1}\supseteq L^{2}=L^{(2)}=[L,L]\supseteq L^{3}=
[L,L^{2}]\supseteq \ldots \supseteq L^{k}=[L,
L^{k-1}]\supseteq\ldots.
$$

\indent A Lie algebra $L$ is considered solvable or nilpotent if there is some positive
integer $k$ such that $L^{(k)}=0$ or $L^k=0$ respectively. 
Obviously all nilpotent Lie algebras 
are solvable.\par

\indent One of the most useful properties of a complex solvable finite
dimensional Lie algebra $L$ is the existence of Jordan-H\" older
sequences, i.e., a sequence of 
ideals $(L_k)_{0\le k\le n}$ such that\par
\noindent (i) $L_0=0$, $L_n=L$,\par
\noindent  (ii) $L_i\subseteq L_{i+1}$, for $i=0,\ldots , n-1$,\par
\noindent  (iii) $\dim L_i=i$, where $n=\dim L$; see [8, Chapter 5, Section 3, Corollaire 3]. \par

\indent Another important property of these
algebras is the existence of polarizations. A polarization of a 
character $f$ of $L$ is a subalgebra $P(f)$ of $L$ maximal 
with respect to the property $f([I,I])=0$, where $I$ is a subalgebra 
of $L$. In fact, if $(L_k)_{0\le k\le n}$ is a Jordan-H\"older
sequence of ideals of $L$, then $P(f; (L_k)_{0\le k\le n})=
\sum_{i=0}^n N_i(f_i)$ is a polarization of $f$, where $N_i(f_i)=
\{x\in L_i: f([x,L_i])=0\}$; see [4, Chapter IV, Section 4, Proposition 4.1.1]. \par  

\indent Next we review the definitions of the Taylor, the S\l odkowski 
and the split joint spectra. From now on $L$ denotes a complex solvable finite dimensional 
Lie algebra, $X$ a complex Banach space and $\rho\colon L\to {\rm L}
(X)$ a representation of $L$ in $X$.
We consider the Koszul complex of the representation $\rho$, i.e., 
$(X\otimes\wedge L, d(\rho))$, where $\wedge L$ denotes the 
exterior algebra of $L$, and  
$d_p (\rho)\colon X\otimes\wedge^p L\rightarrow X\otimes
\wedge^{p-1} L$ is the map defined by

\begin{align*} d_p (\rho)&( x\otimes\langle l_1\wedge\dots\wedge l_p\rangle) 
=  \sum_{k=1}^p (-1)^{k+1}\rho (l_k)(x)\otimes\langle l_1
\wedge\ldots\wedge\hat{l_k}\wedge\ldots\wedge l_p\rangle\\
                                                         + & \sum_{1\le i< j\le p} (-1)^{i+j}
x\otimes\langle [l_i, l_j]\wedge l_1\wedge\ldots\wedge\hat{l_i}\wedge
\ldots\wedge\hat{l_j}\wedge
\ldots\wedge l_p\rangle ,\\ \end{align*} 

where $\hat{ }$ means deletion. 
For $p$ such that $p\le 0$ or $p\ge {n+1}$, $n=\dim L$, we 
define $d_p (\rho) =0$.\par

\indent In addition, if $f$ is a character of $L$, then we 
consider the representarion of $L$ in $X$ $\rho-f\equiv \rho-f\cdot I$, 
where $I$ denotes the identity map of $X$. Now, if $H_*(X\otimes
\wedge L , d(\rho-f))$ denotes the homology of the Koszul complex of 
the representation $\rho-f$, then we consider the set
$$
\sigma_p(\rho)= \{ f\in L^*: f(L^2)=0, H_p(X\otimes\wedge L,
d(\rho-f))\ne 0\}.
$$
Now we state the definition of the Taylor and the S\l odkowski joint 
spectra; see [5], [7], [16], [20] and [21]. We follow the notation of 
[21, Definition 2.11.1].\par

\begin{df} Let $X$ be a complex Banach
space, $L$ a complex solvable finite dimensional Lie algebra, 
and $\rho\colon L\to {\rm L}(X)$ a representation of $L$
in $X$. Then, 
the Taylor joint spectrum of $\rho$
is the set
$$\sigma(\rho)= \bigcup_{p=0}^n\sigma_p(\rho)=\{f\in L^*: f(L^2) =0, 
H_*(X\otimes\wedge L,d(\rho-f))\neq 0\}.
$$
\indent In addition, the $k$-th $\delta$-S\l odkowski joint spectrum of $\rho$ 
is the set
$$
\sigma_{\delta ,k}(\rho)= \bigcup_{p=0}^k\sigma_p(\rho) ,
$$
and the $k$-th $\pi$-S\l odkowski joint spectrum of $\rho$ is the set 
$$
\sigma_{\pi ,k}(\rho)= \bigcup_{p=n-k}^n\sigma_p(\rho)\cup \{f
\in L^*: f(L^2)=0, R(d_{n-k}(\rho-f)) \hbox{ is not closed}\},
$$
for $0\le k\le n=\dim L$. \par
\indent We observe that   $\sigma_{\delta ,n}(\rho)= \sigma_{\pi ,n}
(\rho)= \sigma(\rho)$.

\end{df}

\indent The Taylor and the S\l odkowki joint spectra are compact non-void
subsets of $L^*$.
When $L\subseteq L(X)$ is a commutative subalgebra of operators
and the representation is the inclusion $\iota
\colon L\to L(X)$, $\iota (T)=T$, $T\in L$, $\sigma (\iota )$, $\sigma_{\delta ,k}(\iota)$ and
$\sigma_{\pi ,k}(\iota)$ are reduced to the usual Taylor and the usual S\l odkowski joint spectra respectively
in the following sense. If $l=(l_1,\ldots , l_n)$ is a basis of $L$ and 
$\sigma$ denotes either the Taylor joint spectrum or 
one of the S\l odkowski joint spectra of $\iota$, 
then $\{(f(l_1),\ldots , f(l_n)): f\in\sigma \}=
\sigma(l_1,\ldots ,l_n)$, i.e., the joint 
spectrum $\sigma$ in terms of the
basis $l=(l_1,\ldots ,l_n)$ coincides with
the spectrum of the $n$-tuple $l$. 
In addition, these joint spectra have the so-called projection property. 
However, since this property is one of the most important
results that all the joint spectra considered in this work
have in common,
we give the explicit definition. \par

\begin{df} Let $X$ be a complex Banach space and $\sigma$
a function which assigns a compact non-void subset
of the characters of $L$ to each representation 
$\rho\colon L\to {\rm L}(X)$ of a complex
solvable finite dimensional Lie algebra $L$ in
$X$ . In addition, let $I$ be an ideal or a subalgebra
of $L$, in the solvable or nilpotent case respectively, and 
consider the representation
$\rho\mid I\colon I\to {\rm L}(X)$, i.e., the restriction of $\rho$ 
to $ I$. 
Then, we say that $\sigma$
has the projection property when for each ideal 
or subalgebra, in the solvable or nilpotent case respectively,  
we have 
$$
\pi (\sigma(\rho))=\sigma (\rho\mid I),
$$
where $\pi\colon L^*\to I^*$ is the restriction map.\par
\end{df} 

\indent Next we review the definition of the split joint spectra, and we
prove their most important properties, the projection
property among them.\par

\indent A finite complex of Banach space and bounded linear operators 
$(X,d)$ is a sequence 
$$
0\to X_n\xrightarrow{d_n}X_{n-1}\to\ldots\to X_1\xrightarrow{d_1}X_0\to 0,
$$
where $n\in \Bbb N$, $X_p$ is a Banach space, and the maps 
$d_p \in{\rm L}(X_p ,X_{p-1})$ are such that $d_p
\circ d_{p-1}=0$,  for $p=0,\ldots , n$. \par
\indent For a fixed integer $p$, $0\le p\le n$, 
we say that $(X,d)$ is split in degree $p$ if there are continous 
linear operators
$$
X_{p+1}\xleftarrow{h_p}X_p\xleftarrow{h_{p-1}}X_{p-1},
$$
such that $d_{p+1}h_p+h_{p-1}d_p=I_p$, where 
$I_p$ denotes the identity operator of $X_p$.\par

\indent In addition, if $L$, $X$ and $\rho$ are as above, then for each 
$p$ we consider the set

$$
sp_p (\rho )= \{ f\in L^*: f(L^2)=0, (X\otimes\wedge L,d(\rho-f))
\hbox{ is not split in degree p} \}.
$$
Next follows the definition of the split joint spectra; see 
[13] and [21].\par

\begin{df}Let $X$ be a complex Banach
space, $L$ a complex solvable finite dimensional Lie algebra, 
and $\rho\colon L\to {\rm L}(X)$ a representation of $L$
in $X$. Then, 
the split joint spectrum of $\rho$ is the set
$$
sp(\rho)=\bigcup_{p=0}^n sp_p(\rho).
$$
In addition, the $k$-th $\delta$-split joint spectrum of $\rho$ is the set    
$$
sp_{\delta ,k}(\rho)= \bigcup_{p=0}^k sp_p(\rho) ,
$$
and the $k$-th $\pi$-split joint spectrum of $\rho$ is the set
$$
sp_{\pi ,k}(\rho)= \bigcup_{p=n-k}^n sp_p(\rho),
$$
for $0\le k\le n=\dim L$.\par
\indent  We observe that   $sp_{\delta ,n}(\rho)= sp_{\pi ,n}(\rho)= 
sp(\rho)$.
\end{df}

\indent It is clear that $\sigma_{\delta ,k}(\rho)\subseteq 
sp_{\delta ,k}(\rho)$, $\sigma_{\pi ,k}(\rho)\subseteq 
sp_{\pi ,k}(\rho)$, and that $\sigma(\rho)\subseteq sp(\rho)$. 
Moreover,
if $X$ is a Hilbert space, the above inclusions are equalities. In addition, 
when $L\subseteq L(X)$ is a commutative subalgebra
of operators and the representation is the inclusion $\iota
\colon L\to L(X)$, these joint spectra
coincide with the ones introduced by J. Eschmeier in [13] for 
commuting 
tuples of operators in the same sense explained for the 
Taylor and the S\l odkowski joint spectra. \par

\indent In the following theorem we consider
the main properties of the split joint spectra; for
a complete exposition see [21, Chapter 3].\par

\begin{thm}  Let $X$ be a complex Banach space, 
$L$ a complex 
solvable  finite dimensional Lie algebra, and $\rho\colon L\to 
{\rm L}(X)$ 
a representation of $L$ in $X$. Then
the sets $sp(\rho)$,
$sp_{\delta ,k }(\rho)$, and $sp_{\pi ,k }(\rho)$
are compact non-void subsets of $L^*$ that have the 
projection property.
\end{thm}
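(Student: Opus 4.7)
The plan is to establish the three properties --- non-voidness, compactness, and the projection property --- in turn, with the projection property being the main obstacle.

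For non-voidness, I would first observe that if $(X\otimes\wedge L,d(\rho-f))$ is split in degree $p$, then $H_p(X\otimes\wedge L,d(\rho-f))=0$ and $R(d_p(\rho-f))$ is closed. Hence $\sigma_p(\rho)\subseteq sp_p(\rho)$, and the extra ``non-closed range'' condition distinguishing $\sigma_{\pi ,k}$ is automatically absorbed into $sp_{\pi ,k}$. Thus $\sigma(\rho)\subseteq sp(\rho)$, $\sigma_{\delta ,k}(\rho)\subseteq sp_{\delta ,k}(\rho)$ and $\sigma_{\pi ,k}(\rho)\subseteq sp_{\pi ,k}(\rho)$, and non-voidness of the three split spectra is inherited from that of the S\l odkowski spectra recalled in Section 2.

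For compactness, I would prove closedness and boundedness separately. Closedness: if $f_0\notin sp_p(\rho)$ and $h_{p-1}$, $h_p$ provide the splitting at $f_0$, then for $f\in L^*$ near $f_0$ the operator $d_{p+1}(\rho-f)h_p+h_{p-1}d_p(\rho-f)$ is a norm-small perturbation of the identity of $X\otimes\wedge^p L$, hence invertible, and composing its inverse with $h_p$ and $h_{p-1}$ produces a splitting of $d(\rho-f)$ in degree $p$. Boundedness: fix a basis $l_1,\ldots ,l_n$ of $L$; when $|f(l_j)|$ is large enough for some $j$, interior multiplication by $l_j$ on $X\otimes\wedge L$ rescaled by $(f(l_j))^{-1}$, together with a Neumann-series correction, produces a bounded chain homotopy splitting the entire Koszul complex. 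Hence $sp(\rho)$ is bounded in $L^*$, and so are the $\delta$- and $\pi$-variants.

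The projection property is the main obstacle. Given an ideal $I$ of $L$ and $\pi\colon L^*\to I^*$ the restriction map, I would adapt the projection-property proofs for the Taylor and the S\l odkowski joint spectra in the Lie-algebra setting (see [5], [7], [20] and [21]), replacing ``exactness'' throughout by ``existence of a bounded linear splitting.'' Along a Jordan--H\"older refinement of $L$ passing through $I$, one is reduced to the step $\dim L/I=1$, say $L=I\oplus\Bbb C l$. In this case the identification $\wedge^p L\simeq \wedge^p I\oplus (\wedge^{p-1}I\wedge l)$ realizes $(X\otimes\wedge L,d(\rho-f))$ as the mapping cone of the chain endomorphism induced by $\rho(l)-f(l)I$ on $(X\otimes\wedge I,d(\rho\mid I-f\mid I))$. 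A Schur-complement computation then shows that a degree-$p$ splitting of the cone is equivalent to bounded splittings of the smaller complex in degrees $p-1$ and $p$ together with a compatibility condition involving $\rho(l)-f(l)I$ on the induced level. This gives both the inclusion $\pi(sp_p(\rho))\subseteq sp_{p-1}(\rho\mid I)\cup sp_p(\rho\mid I)$ and a matching lift of splittings from $\rho\mid I$ to $\rho$; summing over the appropriate range of $p$ yields the three identities $\pi(sp(\rho))=sp(\rho\mid I)$, $\pi(sp_{\delta ,k}(\rho))=sp_{\delta ,k}(\rho\mid I)$ and $\pi(sp_{\pi ,k}(\rho))=sp_{\pi ,k}(\rho\mid I)$. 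The genuinely delicate point is this degree shift at the mapping-cone step: one must verify that the $\delta$- and $\pi$-truncations at level $L$ match those at level $I$ under the correspondence between degree $p$ on the cone and degrees $p$ and $p-1$ on the source, and then propagate this bookkeeping through every step of the Jordan--H\"older sequence.
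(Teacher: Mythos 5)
Your arguments for non-voidness and boundedness are fine, and closedness of the split spectra is a known fact, although the step ``compose its inverse with $h_p$ and $h_{p-1}$'' is not a valid operation: the invertible factor $I_p+E$ sits \emph{between} the two homotopy terms $d_{p+1}(\rho-f)h_p$ and $h_{p-1}d_p(\rho-f)$, so it cannot be absorbed by one-sided multiplication, and one must instead invoke a genuine openness criterion for split complexes (as in [13]). The real gap is in the projection property. Both of your ingredients --- the inclusion $\pi(sp_p(\rho))\subseteq sp_{p-1}(\rho\mid I)\cup sp_p(\rho\mid I)$ and the ``matching lift of splittings from $\rho\mid I$ to $\rho$'' --- are, in contrapositive, the same one-sided statement: a splitting at the $I$-level produces one at the $L$-level, giving $\pi(sp(\rho))\subseteq sp(\rho\mid I)$. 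The reverse containment $sp(\rho\mid I)\subseteq\pi(sp(\rho))$ is the hard half and your outline never touches it. It requires, from a character $\tilde f$ of $I$ whose Koszul complex fails to split, the production of a specific value $f(l)$ making the cone fail to split. For the Taylor and S\l odkowski spectra this is done by choosing $f(l)$ in the (nonempty) spectrum of the operator induced by $\rho(l)$ on a nonzero homology module; that device is unavailable here because failure to split is not a homological condition. (Also, the stated Schur-complement equivalence is false in the forward direction: if $S_2$ is invertible the Koszul complex of a commuting pair $(S_1,S_2)$ splits in every degree, regardless of whether the Koszul complex of $S_1$ splits anywhere.)

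The paper does not argue at the level of mapping cones at all; its proof is a citation. It quotes [21, Korollar 3.1.9] for $sp(\rho)$, and [21, Satz 3.1.5] and [21, Satz 3.1.7] --- the non-essential counterparts of Proposition 3.7, Proposition 3.8 and Theorem 3.9 of the present paper --- which identify $sp_{\delta ,k}(\rho)$ with $\sigma_{\delta ,k}(L_\rho)$ and $sp_{\pi ,k}(\rho)$ with $\sigma_{\delta ,k}(R_\rho)+h$, where $L_\rho$ and $R_\rho$ are the left and right multiplication representations on ${\rm L}(X)$. Under these identifications, compactness, non-voidness, and in particular both directions of the projection property transport directly from the already established S\l odkowski joint spectra, so the delicate reverse inclusion never has to be proved from scratch.
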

\begin{proof}

\indent First of all, in [21, Korollar 3.1.9] it was proved that 
$sp(\rho)$ is a compact non-void subset of $L^*$ that 
has the projection property.\par

\indent On the other hand, by [21, Satz 3.1.5], [21, Satz 3.1.7] 
and an argument similar to the one in [21, Korollar 3.1.9], 
it is easy to prove that
$sp_{\delta ,k}(\rho)$ and $sp_{\pi ,k}(\rho)$ are compact 
non-void subsets of $L^*$ that have the projection property.\par       

\end{proof}
\section{The Fredholm joint spectra}

\indent In order to prove the main results in section 6 and 7
we need to study several essential joint spectra.
We first consider the essential joint spectra 
introduced by A. S. Fainshtein, [15], and by V. M\"uller, [19],
for commuting tuples of operators 
and we extend them to representations of solvable Lie algebras
in Banach spaces.
In addition, we extend the essential 
split joint spectra introduced by J. Eschmeier in [13] to such representations .
We begin with the essential Taylor and the S\l odkowski joint 
spectra.\par

\indent Let $X$, $L$, and $\rho\colon L\to {\rm L}(X)$ be as in 
section 2, and for each $p$ consider the set
$$
\sigma_{p ,e}(\rho)=\{f\in L^*: f(L^2)=0, \dim\hbox{ }H_p(X\otimes
\wedge L,d(\rho-f))=\infty\}.
$$

\begin{df}Let $X$ be a complex Banach
space, $L$ a complex solvable finite dimensional Lie algebra,
and $\rho\colon L\to {\rm L}(X)$ a representation of $L$
in $X$. Then, the Fredholm, or essential Taylor, joint spectrum of $\rho$ is the set    
$$
\sigma_e(\rho)=\bigcup_{p=0}^n \sigma_{p ,e}(\rho).
$$
In addition the $k$-th Fredholm or essential $\delta$-S\l odkowski joint spectrum of $\rho$ 
is the set
$$
\sigma_{\delta , k ,e}(\rho)=\bigcup_{p=0}^k \sigma_{p ,e}(\rho),   
$$
and the $k$-th Fredholm or essential $\pi$-S\l odkowski joint spectrum of $\rho$ is the set
$$
\sigma_{\pi ,k ,e}(\rho)=\bigcup_{p=n-k}^n \sigma_{p ,e}(\rho)
\cup \{f\in L^*: f(L^2)=0, 
R(d_{n-k}(\rho))\hbox{ is not closed}\},
$$
for $0\le k\le n=\dim L$.\par
We observe that $\sigma_e(\rho)=\sigma_{\delta ,n ,e}(\rho)=
\sigma_{\pi ,n ,e}(\rho)$.
 
\end{df}

\indent Now we prove that these sets are really joint spectra. 
In fact, we first show 
the properties of the sets $\sigma_{\delta ,k ,e}(\rho)$ 
and then by a duality argument we obtain the 
properties of the sets $\sigma_{\pi ,k ,e}(\rho)$. Moreover, our proof
of the properties of the sets $\sigma_{\delta ,k ,e}(\rho)$ is a direct generalization of the one developed 
in [15].\par

\begin{thm}  Let $X$ be a complex Banach space, 
$L$ a complex 
solvable  finite dimensional Lie algebra, and $\rho\colon L\to 
{\rm L}(X)$ 
a representation of $L$ in $X$. Then 
the sets
$\sigma_{\delta ,k ,e}(\rho)$
are compact non-void subsets of $L^*$ that have the 
projection property.\par
\indent In particular, $\sigma_e(\rho)$ is a compact non-void subset 
of $L^*$ that has the projection property.\par 
\end{thm}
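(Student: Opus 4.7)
The plan is to adapt Fainshtein's strategy in [15] for the commuting case to the Lie algebra setting, using the Hochschild--Serre spectral sequence machinery that Boasso developed in [7] and [21] for the non-essential S\l odkowski joint spectra. Both compactness and non-voidness will ultimately rest on the projection property combined with an induction on $\dim L$, so the projection property is the heart of the proof.

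For compactness, boundedness follows from the inclusion $\sigma_{\delta,k,e}(\rho)\subseteq \sigma(\rho)$ together with the compactness of the Taylor joint spectrum. For closedness, I would fix $f_0$ with $\dim H_p(X\otimes\wedge L, d(\rho-f_0))<\infty$ for $p=0,\ldots,k$ and use the linear dependence of $d_p(\rho-f)$ on $f$ together with standard stability results for Fredholm complexes, as in [15], to obtain a neighborhood of $f_0$ on which these finiteness conditions persist. For the projection property, let $I\subseteq L$ be an ideal and $\pi\colon L^*\to I^*$ the restriction map. The key tool is the Hochschild--Serre spectral sequence
\[
E^2_{p,q}(f) = H_p\bigl(L/I, H_q(X\otimes\wedge I, d(\rho|_I - f|_I))\bigr) \Rightarrow H_{p+q}(X\otimes\wedge L, d(\rho-f)).
\]
The inclusion $\pi(\sigma_{\delta,k,e}(\rho))\subseteq \sigma_{\delta,k,e}(\rho|_I)$ is the easier direction: if $H_p(\rho-f)$ is infinite-dimensional for some $p\le k$, then some $E^{\infty}_{p',q'}$ with $p'+q'=p$ must be infinite-dimensional, forcing $H_{q'}(\rho|_I - f|_I)$ to be infinite-dimensional with $q'\le k$. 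The reverse inclusion requires, given a character $h\in\sigma_{\delta,k,e}(\rho|_I)$, producing an extension $g\in L^*$ with $g|_I=h$, $g(L^2)=0$, and such that infinite-dimensionality survives to the $E^{\infty}$-page at total degree $\le k$. I would reduce to the case $\dim(L/I)=1$ via Jordan--H\"older sequences and then analyze the action of a single generator of $L/I$ on $H_q(\rho|_I-h)$, invoking non-emptiness of the essential spectrum of the induced operator on this (possibly non-Hausdorff) quotient module.

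With the projection property in hand, non-voidness follows by induction on $\dim L$: the base case $\dim L=1$ reduces to the classical non-emptiness of the essential spectrum of a single operator, and the inductive step lifts any character in $\sigma_{\delta,k,e}(\rho|_I)$ (non-void by the inductive hypothesis, for $I$ of codimension one) to a character in $\sigma_{\delta,k,e}(\rho)$. The main obstacle is the reverse inclusion in the projection property: higher differentials in the Hochschild--Serre spectral sequence can a priori annihilate infinite-dimensional subspaces, so preserving infinite-dimensionality while passing from $E^2$ to $E^{\infty}$ is delicate. The analogous difficulty for $\sigma_{\delta,k}$ is resolved in [21] by direct manipulation of kernels and images of the Koszul differentials; the essential version adds the requirement of tracking cofiniteness rather than vanishing, which is precisely where the generalization of Fainshtein's argument must enter. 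Once $\sigma_{\delta,k,e}(\rho)$ is handled, the statement for $\sigma_e(\rho)$ follows from $\sigma_e(\rho)=\sigma_{\delta,n,e}(\rho)$.
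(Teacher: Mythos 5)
Your overall strategy lines up with the paper's: both reduce to codimension-one ideals via Jordan--H\"older and then adapt Fainshtein's argument from [15] (itself the essential version of S\l odkowski's Lemma 1.6 in [23]) to lift a character from $\sigma_{\delta,k,e}(\rho|_I)$ to $\sigma_{\delta,k,e}(\rho)$. Your Hochschild--Serre spectral sequence for the ``easy'' inclusion $\pi(\sigma_{\delta,k,e}(\rho))\subseteq\sigma_{\delta,k,e}(\rho|_I)$ is, for $\dim(L/I)=1$, equivalent to the short exact sequence of Koszul complexes the paper uses, and your worry about higher differentials evaporates once you reduce to codimension one (the spectral sequence degenerates at $E^2$). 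The compactness argument (boundedness via $\sigma_{\delta,k,e}\subseteq\sigma_{\delta,k}$, closedness via stability of Fredholm complexes, cited in the paper as [25, Theorem 2.11]) is the same.

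However, there are two concrete gaps. First, after invoking Fainshtein's argument to produce an extension $g\in L^*$ with $g|_I=\tilde f$ for which the homology of degree $\le k$ of $d(\rho-g)$ is infinite-dimensional, you still need $g(L^2)=0$, i.e., that $g$ is a character of $L$. Fainshtein's argument gives only a linear functional; the paper supplies a separate polarization argument (using that $L$ is a polarization of any of its characters, and comparing $f-f'$ for two extensions, via [4, Chapter IV, Section 4, Proposition 4.1.1] and [8, Chapter 5, Section 3, Corollaire 3]) to conclude that \emph{every} extension of $\tilde f$ is automatically a character. Without this, the reverse inclusion is incomplete, since $\sigma_{\delta,k,e}(\rho)$ is by definition a set of characters. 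Second, the projection property in this paper (Definition 2.2) is asserted for ideals of solvable $L$ \emph{and} for subalgebras of nilpotent $L$. Your proposal handles only the ideal case; the paper treats the nilpotent subalgebra case separately, interpolating between $L$ and $I$ via the chain of subalgebras $L^k+I$ (descending central series), each an ideal in the previous, and then iterating the ideal case. That step is missing entirely from your sketch.
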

\begin{proof}

\indent It is clear that $\sigma_{\delta ,k ,e}(\rho)\subseteq \sigma_{
\delta  ,k}(\rho)$.
Moreover, by [25, Theorem  2.11] $\sigma_{\delta ,k ,e}(\rho)$ is a closed set. 
Thus,
$\sigma_{\delta ,k ,e}(\rho)$ is a compact subset of $L^*$.\par 

\indent In order to prove the projection property for ideals of a solvable
Lie algebra, by 
[8, Chapter 5, Section 3, Corollaire 3] it is enough to consider an ideal $I$ of $L$ 
of codimension 1. Then, 
if we consider the usual decomposition of the chain complex 
$(X\otimes\wedge L, d(\rho))$ associated to the ideal $I$ and the 
short exact sequence defined by this decomposition (see [7], [5] 
and [20]), an easy calculation shows that 
$$
\pi (\sigma_{\delta ,k ,e}(\rho))\subseteq\sigma_{\delta ,k ,e}(\rho
\mid I).
$$
\indent On the other hand, to prove the reverse inclusion
we may apply A. S. Fainshtein's argument in [15, Lemma  1], i.e., 
the essential version of [23, Lemma 1.6]; see
also [19]. However, we have to verify the following fact: 
if $\tilde {f}\in \sigma_{\delta , k, e}(\rho\mid I)$, then for each 
$f\in L^*$
such that $f\mid I=\tilde {f}$, $f$ is a character of $L$, i.e., 
$f(L^2)=0$.\par

\indent Indeed, since $\tilde {f}\in\sigma_{\delta , k ,e}(\rho\mid I)
\subseteq \sigma_{\delta , k}(\rho\mid I)$,
$\tilde{f}$ is a character of $I$, i.e., $\tilde{f}(I^2)=0$. However, 
since $I$ is an ideal of $L$, by the projection property of the
joint spectrum $\sigma_{\delta ,k}(\rho)$ (see 
[5, Theorem 4.5], 
[20, Theorem  3.4] and [21, Satz 2.11.5]), there is $f\in \sigma_{\delta ,k ,e}(\rho)$ 
such that $f\mid I=\tilde{f}$.\par

\indent Now, since $f$ is a character of $L$, $L$ is a polarization for 
$f$ (see [4, Chapter IV, Section 4, Proposition 4.1.1] or section 2). Moreover, as $I$ is an ideal of codimension 1 in $L$
and $\tilde{f}$ is a character of $I$, if there was $f{'}
\in L^*$ 
such that $f{'}\mid I=\tilde{f}$ and such that $f{'}$ was not a 
character of $L$, then
$I$ would be a polarization of $f{'}$ (see [4, Chapter IV, Section 4, Proposition 4.1.1]). However, if we 
considered
$f-f{'}$, by  [8, Chapter 5, Section 3, Corollaire 3] and [4, Chapter IV, Section 4, Proposition 4.1.1] we would have $I=L$, 
which is impossible according to our assumption. Thus, every extension of 
$\tilde{f}$ to $L^*$ is a 
character of $L$. So, we showed the projection property for ideals
of a solvable Lie algebra.\par

\indent We suppose that $L$ is a nilpotent Lie algebra and 
that $I$ is a subalgebra of $L$. As in [21, Satz 0.3.7] we consider
a sequence of subalgebras of $L$, $(L^k+I)_{k\in {\Bbb N}}$,
where $(L^k)_{k\in {\Bbb N} }$ is the descending central series
of $L$. In particular, we have $L^1+I=
L+I=L$. Moreover, since $L$ is a nilpotent Lie algebra, there is $k_0\in\Bbb
N$ such that $L^k=0$ for all $k\ge k_0$, which implies that for all
$k\ge k_0$, $L^k+I=I$. In addition, since for all $k\in {\Bbb N}$
$[L, L^k]=L^{k+1}$, we have $[L^k+I, 
L^{k+1}+I]\subseteq [L,L^{k+1}]+ [L^{k}, L]+[I,I]
\subseteq L^{k+1}+I$, i.e., for each $k\in {\Bbb N}$, 
$L^{k+1}+I$ is an ideal of $L^{k}+I$. Thus, considering the projection
property for ideals, we get the projection property for subalgebras
of nilpotent Lie algebras.\par  
\end{proof}

\indent We proved the main properties of the joint 
spectra $\sigma_{\delta ,k ,e}(\rho)$. For $\sigma_{\pi ,k ,e}(\rho)$ 
we proceed by a duality argument. We begin with the following proposition,
which extends a result of 
Z. S\l odkowski (see [23, Lemma 2.1]).\par   

\begin{pro}Let $X\xrightarrow{A} Y\xrightarrow{B}Z$ be a chain complex 
of Banach spaces and
bounded linear operators. Then the following conditions are equivalent:\par

\noindent {\rm (i)} $\dim Ker (B)/R(A)<\infty$ and  $R(B)$ is closed,

\noindent  {\rm (ii)} $\dim Ker(A^*)/R(B^*)<\infty$ and $R(A^*)$ is closed.
\end{pro}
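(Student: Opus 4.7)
The plan is to reduce the whole statement to the closed range theorem together with the standard annihilator identities $\ker(T^*)=R(T)^\perp$ and (when $R(T)$ is closed) $R(T^*)=\ker(T)^\perp$. The auxiliary fact I would establish first, and then use in both directions, is the following: if $S\colon E\to F$ is a bounded operator between Banach spaces whose range has finite codimension in $F$, then $R(S)$ is automatically closed. I would prove this by choosing a finite-dimensional algebraic complement $G$ of $R(S)$ in $F$, forming the bounded surjection $\tilde S\colon E\oplus G\to F$, $\tilde S(x,g)=Sx+g$, invoking the open mapping theorem to get a topological isomorphism $(E\oplus G)/\ker\tilde S\cong F$, and observing that $R(S)$ corresponds to the image of the closed subspace $E\oplus\{0\}$, which contains the closed subspace $\ker\tilde S=\ker(S)\oplus\{0\}$.

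For (i)$\Rightarrow$(ii), I would apply the auxiliary fact to $A$ viewed as a bounded map into the closed subspace $\ker(B)$, obtaining that $R(A)$ is closed; the closed range theorem then gives that $R(A^*)$ is closed. Since $R(B)$ is closed by hypothesis, one has $R(B^*)=\ker(B)^\perp$ and $\ker(A^*)=R(A)^\perp$, so
\[
\ker(A^*)/R(B^*)=R(A)^\perp/\ker(B)^\perp.
\]
Because $R(A)\subseteq\ker(B)$ are both closed subspaces of $Y$, the standard duality for closed subspaces identifies the right-hand side with $(\ker(B)/R(A))^*$, a space of the same finite dimension as $\ker(B)/R(A)$.

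For (ii)$\Rightarrow$(i), the same auxiliary fact, applied to $B^*$ viewed as a map into $\ker(A^*)$, shows that $R(B^*)$ is closed; a second application of the closed range theorem upgrades this to closedness of $R(B)$, and the hypothesis on $R(A^*)$ similarly yields closedness of $R(A)$. Once all four ranges are closed, the annihilator computation of the previous paragraph runs verbatim, and the finite-dimensionality of $R(A)^\perp/\ker(B)^\perp$ now transfers back to $\ker(B)/R(A)$ via the same duality.

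The main obstacle is the auxiliary closedness fact, since the remainder of the argument is a clean bookkeeping consequence of the closed range theorem; the delicate point is precisely that finite algebraic codimension of the range inside a Banach ambient subspace forces topological closedness. A small subtlety to keep in mind throughout is that the chain-complex hypothesis $BA=0$ is used to guarantee $R(A)\subseteq\ker(B)$, so that the quotient $\ker(B)/R(A)$ becomes a genuine Banach space before its dual is taken.
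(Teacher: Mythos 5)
Your proof is correct, and it takes a genuinely different and arguably cleaner route than the paper's. The paper first deduces that $R(A)$ (hence $R(A^*)$) is closed exactly as you do, but then to control $\ker(A^*)/R(B^*)$ it builds an auxiliary operator $A'=A\oplus i\colon X\oplus N\to Y$ (where $R(A)\oplus N=\ker B$) so that $X\oplus N\xrightarrow{A'}Y\xrightarrow{B}Z$ is exact, invokes S\l odkowski's exactness--duality lemma to get $R(B^*)=\ker(A'^*)=\ker(A^*)\cap N^\perp$, and finally injects $\ker(A^*)/R(B^*)$ into the finite-dimensional space $Y'/N^\perp$. For the converse the paper embeds $Y$ and $Z$ into their biduals, applies the already-proved forward direction to $A^*,B^*$, and injects $\ker(B)/R(A)$ into $\ker(B^{**})/R(A^{**})$. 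Your argument instead goes entirely through the closed range theorem and the standard annihilator identification $(N/M)^*\cong M^\perp/N^\perp$ for closed $M\subseteq N$, together with the observation that a normed space with finite-dimensional dual is itself finite-dimensional. This makes the two directions symmetric, avoids the auxiliary operator and the bidual, and isolates the single analytic ingredient (finite algebraic codimension of a range forces closedness) more transparently. The one point worth spelling out in a final write-up is exactly the identification $(N/M)^*\cong M^\perp/N^\perp$: it follows by combining $(N/M)^*\cong\{\phi\in N^*:\phi|_M=0\}$ with $N^*\cong Y^*/N^\perp$, and it needs $M$ closed so that $N/M$ is Hausdorff; you correctly arrange for $R(A)$ to be closed before invoking it.
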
  

\begin{proof}

\indent First of all, if $\dim Ker(B)/R(A)<\infty$, then $R(A)$ is closed, 
and then $R(A^*)$ is 
closed. \par

\indent Now, if $N$ is a finite dimensional subspace of $Y$ such that 
$R(A)\oplus N =Ker(B)$ and if $i\colon N\to Y$ is the inclusion map, 
then we consider the chain complex
$$
X\oplus N\xrightarrow{A{'}}Y\xrightarrow{B}Z,
$$
where $A{'}=A\oplus i$, i.e., for $x\in X$ and $n\in N$,
$$
A{'}(x,n)=A(x)+i(n).
$$  
\indent Since $R(A{'})=Ker (B)$ and $R(B)$ is closed, by [23, Lemma 2.1] 
we have 
$$
R(B^*)=Ker(A{'}^*)=Ker(A^{*})\cap Ker (i^*)=Ker (A^*)\cap 
N^{\perp}\subseteq Ker(A^*).
$$
\indent Now we consider the inclusion map
$$
\iota_1\colon Ker (A^*)\to Y{'},
$$

where $Y{'}$ denotes the dual Banach space of  $Y$.\par
\indent Since $\iota_1(R(B^*))\subseteq N^{\perp}$, we may  
consider the quotient map
$$
\tilde{\iota}_1\colon Ker(A^*)/R(B^*)\to Y{'}/N^{\perp}.
$$
However, if $M$ is a closed subspace of $Y$ such that $N\oplus M=Y$, 
then
$$
Y{'}/N^{\perp}\cong M^{\perp}\cong N{'}.
$$ 
In particular, $\dim Y{'}/N^{\perp}<\infty$, and since $\iota_1^{-1}
(N^{\perp})=R(B^*)$,
we have $\tilde{\iota}_1$ is an injection, which implies that 
$\dim Ker(A^*)/R(B^*)<\infty$.\par

\indent On the other hand, if $\dim Ker(A^*)/R(B^*)<\infty$, then 
$R(B^*)$ is 
closed and then $R(B)$ is closed.\par

\indent Now, if in the canonical way we identify $Y$ and 
$Z$ with a 
subspace of $Y{''}$ and $Z{''}$ respectively, then 
$$
R(A^{**})\cap Y=R(A),\hskip2cm Ker(B^{**})\cap Y=N(B).
$$ 
Thus,
$$
\dim Ker(B)/R(A)=\dim Ker(B^{**})\cap Y/R(A^{**})\cap Y.
$$

In addition, if we consider the inclusion map
$$
\iota_2\colon Ker(B^{**})\cap Y\to Ker(B^{**}),
$$
since $\iota_2(R(A^{**})\cap Y)\subseteq R(A^{**})$ and 
$\iota_2^{-1}(R(A^{**}))=R(A^{**})\cap Y$,
the quotient map
$$
\tilde{\iota}_2\colon Ker(B^{**})\cap Y/R(A^{**})\cap Y \to 
Ker(B^{**})/R(A^{**})
$$
is an injection. In particular, $\dim Ker(B)/R(A)\le \dim N(B^{**})/
R(A^{**})$.\par

\indent However, from the first part of the proposition, that
has just been proved,
we know that $\dim N(B^{**})/R(A^{**})<\infty$.\par
\end{proof}

\indent When $\rho\colon L\to L(X)$ is a representation of the
Lie algebra $L$ in the Banach space $X$, we may consider the adjoint
representation of $\rho$, i.e., $\rho^{*}\colon L^{op}\to
L(X{'})$, $\rho^{*}(l)=(\rho (l))^{*}$, where $X{'}$ denotes
the dual space of $X$. Now we  
relate the joint spectra $\sigma_{\delta ,k,e}(\rho)$ and 
$\sigma_{\pi ,k,e}(\rho)$.\par

\begin{thm} Let $X$ be a complex Banach space, $L$ a 
complex solvable
finite dimensional Lie algebra, and $\rho\colon L \to {\rm L}(X)$ a 
representation of $L$ in $X$. If  
$\rho^{*}\colon L^{op}\to {\rm L}(X{'})$ is the adjoint 
representation of $\rho$, then there is a character of $L$, 
$h$, depending only on the Lie structure of $L$, such that  \par

\noindent {\rm (i)} $\sigma_{\delta ,k ,e}(\rho)+h=\sigma_{\pi ,k ,e}(\rho^*),$\par

\noindent  {\rm (ii)} $\sigma_{\pi ,k ,e}(\rho)+h=\sigma_{\delta ,k ,e}(\rho^*)$,\par
    
for $0\le k\le n$.\par

\end{thm}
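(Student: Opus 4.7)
The plan is to prove (i) by identifying the dual of the Koszul complex $(X\otimes\wedge L,d(\rho-f))$ with a regraded, twisted copy of the Koszul complex $(X'\otimes\wedge L,d(\rho^*-f-h))$, and then invoking Proposition 3.3 degree by degree; statement (ii) will then follow by a parallel argument together with the biduality embedding $X\hookrightarrow X''$. The character $h$ should be the \emph{modular character} $h(l)=\mathrm{tr}(\mathrm{ad}_l)$ of $L$, which automatically vanishes on $L^2$, depends only on the Lie structure of $L$, and is zero in the commutative case, so that the theorem specializes to Slodkowski's classical duality in [23, Proposition 2.2].

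First I would fix a nonzero $\omega\in\wedge^n L$ and use the contraction pairing $\wedge^p L\otimes\wedge^{n-p}L\to\wedge^n L\simeq\Bbb C$ to define canonical isomorphisms $(X\otimes\wedge^p L)^{*}\simeq X'\otimes\wedge^{n-p}L$. A direct computation of the transpose of $d_p(\rho-f)$ in these trivializations, separating the action term and the bracket term of the Koszul differential, shows that this transpose agrees up to sign with $d_{n-p}(\rho^{*}-f-h)$ under the identification. The shift by $h$ arises precisely because the sum of interior multiplications that makes up the bracket term has trace $\mathrm{tr}(\mathrm{ad}_l)$ on $\wedge L$, which is the correction picked up when passing to the transpose.

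Next I would apply Proposition 3.3 to each three-term subcomplex $X\otimes\wedge^{p+1}L\to X\otimes\wedge^p L\to X\otimes\wedge^{p-1}L$ of $(X\otimes\wedge L,d(\rho-f))$. Because the transpose of this subcomplex is, via the previous step, the three-term subcomplex at degree $n-p$ of $(X'\otimes\wedge L,d(\rho^{*}-f-h))$, Proposition 3.3 yields that $\dim H_p(\rho-f)<\infty$ together with $R(d_p(\rho-f))$ closed is equivalent to $\dim H_{n-p}(\rho^{*}-f-h)<\infty$ together with $R(d_{n-p}(\rho^{*}-f-h))$ closed. Running $p$ from $0$ up to $k$, the closed-range condition is automatic at the initial degree $p=0$ and, once it holds at one degree, the corresponding dual closed-range condition propagates upward along the complex, so that the intermediate closed-range clauses can be absorbed until only the single boundary clause at degree $n-k$ in $\sigma_{\pi,k,e}(\rho^{*})$ remains. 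Matching indices $p\leftrightarrow n-p$ with the definitions of the $\delta$- and $\pi$-variants then yields (i).

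To deduce (ii) I would run the same argument in reverse, starting from the Koszul complex $(X'\otimes\wedge L,d(\rho^{*}-g))$ with $g=f+h$ and applying the (ii)$\Rightarrow$(i) direction of Proposition 3.3 together with the biduality embedding $X\hookrightarrow X''$, exactly as in the second half of the proof of Proposition 3.3, to descend the resulting equivalence from $X''$ down to $X$. The main obstacle is the first step: verifying that the transpose of the Koszul differential written in the contraction identification is exactly $\pm d_{n-p}(\rho^{*}-f-h)$, which requires careful sign bookkeeping and the key computation that the bracket term contributes precisely the modular character $h$ upon transposition. Once this identification is in place, the remainder of the proof reduces to a mechanical degree-by-degree invocation of Proposition 3.3 and a matching of indices with the definitions of the essential $\delta$- and $\pi$-Slodkowski spectra.
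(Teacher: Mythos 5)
Your proposal is correct and follows essentially the same route as the paper: the paper's one-line proof simply cites [5, Theorem~1] and [21, Korollar 2.4.5] for the Poincar\'e-duality identification of the transposed Koszul complex with the Koszul complex of $\rho^*$ shifted by the modular character $h(l)=\mathrm{tr}(\mathrm{ad}_l)$ and with degrees reflected $p\leftrightarrow n-p$, and then applies Proposition~3.3 degree by degree exactly as you do. One small simplification worth noting: since Proposition~3.3 is stated as a biconditional, the index-matching argument already yields (ii) directly from the identification of the Koszul complex of $\rho^*-g$ with the dual of that of $\rho-f$, so the extra excursion through $X''$ and the biduality embedding that you describe is not needed here (it is only used internally in the proof of Proposition~3.3 itself).
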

\begin{proof}
\indent It is a consequence of Proposition 3.3, [5, Theorem 1 ] and [21, Korollar 2.4.5].\par 

\end{proof}

\indent Now we state the main properties of the joint spectra 
$\sigma_{\pi ,k,e}(\rho)$.\par

\begin{thm}  Let $X$ be a complex Banach space, $L$ a 
complex solvable
finite dimensional Lie algebra, and $\rho\colon L\to {\rm L}(X)$ a 
representation of $L$ in $X$. Then, the sets
$\sigma_{\pi ,k ,e}(\rho)$  
are compact non-void subsets of $L^*$ that have the projection 
property.
\end{thm}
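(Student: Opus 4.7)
The plan is to deduce Theorem 3.5 from Theorem 3.2 by transferring all the desired properties through the duality established in Theorem 3.4. Since Theorem 3.4 gives the identity $\sigma_{\pi,k,e}(\rho) = \sigma_{\delta,k,e}(\rho^{*}) - h$, with $h$ a character of $L$ depending only on its Lie structure, each of the three assertions (compact, non-void, projection property) for $\sigma_{\pi,k,e}(\rho)$ should follow, with essentially no new analytic work, from the corresponding assertion for $\sigma_{\delta,k,e}(\rho^{*})$ already proved in Theorem 3.2.

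First I would handle compactness and non-voidness. By Theorem 3.2 applied to the adjoint representation $\rho^{*}\colon L^{op}\to {\rm L}(X')$, the set $\sigma_{\delta,k,e}(\rho^{*})$ is a compact non-void subset of $(L^{op})^{*}=L^{*}$. Translation by $-h$ is a homeomorphism of $L^{*}$, so $\sigma_{\pi,k,e}(\rho)=\sigma_{\delta,k,e}(\rho^{*})-h$ is compact and non-void as well.

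For the projection property, let $I$ be an ideal of $L$ (or, in the nilpotent case, a subalgebra). Since the bracket of $L^{op}$ is the negative of that of $L$, the underlying set $I$ is also an ideal (respectively, a subalgebra) of $L^{op}$, and $(\rho\mid I)^{*}=\rho^{*}\mid I^{op}$. I would then combine three facts: (i) the restriction map $\pi\colon L^{*}\to I^{*}$ commutes with translation by characters in the sense that $\pi(\xi-h)=\pi(\xi)-\pi(h)$; (ii) $\pi(h)$ is precisely the character of $I$ produced by Theorem 3.4 when it is applied to $\rho\mid I$, because Theorem 3.4 asserts that $h$ depends only on the Lie structure and its construction is functorial under restriction to an ideal/subalgebra; (iii) the projection property for $\sigma_{\delta,k,e}(\rho^{*})$, already known from Theorem 3.2 applied to $L^{op}$. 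Chaining these together gives
\begin{equation*}
\pi(\sigma_{\pi,k,e}(\rho))=\pi(\sigma_{\delta,k,e}(\rho^{*}))-\pi(h)=\sigma_{\delta,k,e}(\rho^{*}\mid I^{op})-\pi(h)=\sigma_{\pi,k,e}(\rho\mid I),
\end{equation*}
which is exactly the projection property.

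The only delicate point, and hence the main obstacle, is verifying fact (ii): one must check that the character $h$ of Theorem 3.4 restricts to the analogous character for the subalgebra or ideal $I$. This is essentially bookkeeping on the explicit formula for $h$ coming from the proofs of \cite[Theorem 1]{5} and \cite[Korollar 2.4.5]{21}, namely that $h$ is (up to sign) the character obtained from the trace of the adjoint action of $L$ on itself used to identify the Koszul complex of $\rho$ with that of $\rho^{*}$; since this trace is compatible with passage to an ideal or subalgebra, the restriction identity $h_{I}=\pi(h)$ holds. Once this point is settled, the rest of the argument is purely formal.
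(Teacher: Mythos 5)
Your proposal is correct and follows essentially the same route as the paper: both deduce compactness and non-voidness from Theorem 3.2 applied to $\rho^{*}$ combined with the translation of Theorem 3.4, and both establish the projection property by chaining the duality identity with the key fact $h\mid I=h_{I}$ and the projection property for $\sigma_{\delta,k,e}$. The one (minor) place you diverge is in the nilpotent subalgebra case: the paper falls back on the descending-central-series reduction from the proof of Theorem 3.2 (which reduces subalgebras to a chain of ideals), whereas you run the duality computation uniformly. Your route is fine because for nilpotent $L$ the character $h$ — being the trace of the adjoint action — vanishes identically by Engel's theorem, so $h\mid I=0=h_{I}$ holds trivially for any subalgebra $I$; you flag this compatibility as the one delicate point and sketch the correct justification. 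One small correction: what you need is not that $h$ is compatible with restriction as a general property of traces, but the specific facts that for an ideal $I$ of solvable $L$ the quotient action of $\mathrm{ad}_x$ on $L/I$ is zero (so the traces on $L$ and $I$ agree), and that for nilpotent $L$ all these traces vanish; this is what the paper is implicitly invoking via [5, Theorem 1] and [21, Korollar 2.4.5].
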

\begin{proof}

\indent According to Theorems 3.2 and 3.4, $\sigma_{\pi ,k ,e}(\rho)$ are compact 
non-void subsets of $L^*$, $0\le k\le n=\dim L$. \par

\indent On the other hand, if $L$ is a solvable Lie algebra and $I$
an ideal of $L$, then by  [8, Chapter 5, Section 3, Corollaire 3] there is a Jordan-H\" older 
sequence of ideals of $L$ such that $I$ is one of its terms. Thus 
we may suppose that $\dim I=n-1$. In addition, if 
$h$ and $h_{ I}$ are the characters of $L$ and $I$ involved in formulas 
(i) and (ii) of Theorem 3.4
that correspond to the Lie algebra $L$ and the ideal $I$ respectively, 
then by [5, Theorem 1] and 
[8, Chapter 5, Section 3, Corollaire 3], or by [21, Korollar  2.4.5], $h\mid I=h_I$. In particular,
$$
\sigma_{\pi ,k ,e}(\rho\mid I)+h\mid I=\sigma_{\delta ,k ,e}(\rho^*
\mid I).
$$
Then, according to Theorem 3.4 we have 
$$
\pi (\sigma_{\pi ,k,e}(\rho))=\pi (\sigma_{\delta ,k,e}(\rho^{*})-h)=
\sigma_{\delta ,k,e}(\rho^{*})-h\mid I= \sigma_{\pi ,k ,e}(\rho\mid I).
$$

\indent So, we proved the projection property for ideals of a solvable
Lie algebra. On the other hand, to prove the projection property
for subalgebras of a nilpotent Lie algebra, it is enough to apply the 
corresponding proof of Theorem 3.2.\par 
\end{proof}

\indent Now we study the essential split joint 
spectra. These joint spectra are the 
extension to representations of solvable Lie algebras in Banach
space of the corresponding joint spectra introduced by J. 
Eschmeier in [13] for finite tuples of commuting Banach space operators.
Moreover, in order to show their main
properties, we use a characterization
proved in [13]. \par

\indent As in section 2, we now consider a finite complex of Banach 
spaces and bounded linear operators $(X,d)$, 
$$
0\to X_n\xrightarrow{d_n}X_{n-1}\to\ldots\to X_1\xrightarrow{d_1}X_0\to 0.
$$
\indent Given a fixed integer $p$, $0\le p\le n$, 
we say that $(X,d)$ is Fredholm split in degree $p$ if there 
are continous linear operators
$$
X_{p+1}\xleftarrow{h_p}X_p\xleftarrow{h_{p-1}}X_{p-1},
$$
and $k_p$ a compact operator defined in $X_p$ such that 
$d_{p+1}h_p+h_{p-1}d_p=I_p-k_p$, where $I_p$ denotes the
identity operator of $X_p$.\par

\indent Let $X$, $L$, and $\rho\colon L\to {\rm L}(X)$
be as in section 2, and for each $p$ consider the set

\begin{align*}
sp_{p,e} (\rho )= &\{ f\in L^*: f(L^2)=0, (X\otimes\wedge L,d(\rho-f))
\hbox{ is not Fredholm split}\\
                           &\hbox{in degree p} \}.\\\end{align*}

Now we state the definition of the essential split joint spectra; see [13].\par

\begin{df}Let $X$ be a complex Banach
space, $L$ a complex solvable finite dimensional Lie algebra,
and $\rho\colon L\to {\rm L}(X)$ a representation of $L$
in $X$. Then, 
the Fredholm or essential split joint spectrum of $\rho$ is the set
$$
sp_e(\rho)=\bigcup_{p=0}^n sp_{p,e}(\rho).
$$
In addition, the $k$-th Fredholm or essential $\delta$-split joint spectrum of $\rho$ 
is the set    
$$
sp_{\delta ,k,e}(\rho)= \bigcup_{p=0}^k sp_{p,e}(\rho) ,
$$
and the $k$-th Fredholm or essential $\pi$-split joint spectrum of $\rho$ is the set
$$
sp_{\pi ,k,e}(\rho)= \bigcup_{p=n-k}^n sp_{p,e}(\rho),
$$
for $0\le k\le n$.\par
\indent  We observe that $sp_{\delta ,n,e}(\rho)= 
sp_{\pi ,n ,e}(\rho)= sp_e(\rho)$.
\end{df}

\indent In order to show the main properties of these
joint spectra, we need to prove some technical results. We first
review several facts related to complexes of Banach space operators.\par

\indent Given a finite complex of Banach spaces and bounded 
linear operators $(X,d)$ and a Banach space $Y$, we denote the complex 
$$
0\to{\rm L}(Y, X_n)\xrightarrow{L_{d_n}}{\rm L}(Y,X_{n-1})\to\ldots
\to{\rm L}(Y, X_1)\xrightarrow{L_{d_1}}{\rm L}(Y,X_0)\to 0
$$
by ${\rm L}(Y,X.)$, where $L_{d_p}$ denotes the induced operator of left multiplication 
with $d_p$, i.e., for $T\in{\rm L}(Y,X_p)$, $L_{d_p}(T)= 
d_p\circ T\in{\rm L}(Y,X_{p-1})$, $0\le p\le n$; see [13].\par 

\indent In addition, if $X_1$ and $X_2$ are two complex Banach 
spaces,
and if ${\rm K}(X_1,X_2)$ denotes the ideal of
all compact operators in ${\rm L}(X_1,X_2)$, then it is clear that 
$L_{d_p}({\rm K}(Y,X_p))\subseteq {\rm K}(Y,X_{p-1})$. Thus,
we may consider the complex ${\rm C}(Y,X.)
=({\rm C}(Y,X_p),
\tilde{L}_{d_p})$, where ${\rm C}(Y,X_p)
={\rm L}(Y,X_p)/
{\rm K}(Y,X_p)$ and $\tilde{L}_{d_p}$ is the quotient operator 
associated to $L_{d_p}$; see [13].\par

\indent On the other hand, if $L$, $X$, and $\rho\colon L
\to{\rm L}(X)$ are as in section 2, then we consider the 
representation 
$$
L_{\rho}\colon L\to {\rm L}({\rm L}(X)),\hbox{}l\mapsto L_{\rho(l)},
$$
where $L_{\rho(l)}$ denotes the left multiplication
operator associated to $\rho(l)$, $l\in L$; 
see Chapter 3, Section 3.1 of [21]. \par

\indent In addition, since $L_{\rho(l)}({\rm K}(X))\subseteq 
{\rm K}(X)$, it is possible to consider the representation
$$
\tilde{L}_{\rho}\colon L\to {\rm L}({\rm C}(X)),
$$   
where ${\rm C}(X)={\rm L}(X)/{\rm K(X)}$
and $\tilde{L}_{\rho}(l)$ is the quotient operator defined in 
${\rm C}(X)$ associated to $L_{\rho(l)}$.\par 

\indent In the following proposition we relate 
the complexes $({\rm C}(X)\otimes \wedge L, 
d(\tilde{L}_{\rho}))$ and ${\rm C}
(X,(X\otimes\wedge L ,d(\rho)).)$.\par

\begin{pro}Let $X$ be a complex Banach space, 
$L$ a complex 
solvable finite dimensional Lie algebra, and $\rho\colon L\to{\rm L} 
(X)$ a representation of $L$ in $X$. Then, the complexes 
$({\rm C}(X)\otimes \wedge L, d(\tilde{L}_{\rho}))$ and 
${\rm C}(X,(X\otimes\wedge L ,
d(\rho)).)$ are naturally isomorphic.\end{pro}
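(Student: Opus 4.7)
The plan is to build a natural degree-wise isomorphism and then verify that it intertwines the two differentials. Since $L$ is finite dimensional, each $\wedge^p L$ is finite dimensional, so for any Banach space $Y$ there is a canonical isomorphism
$$\Phi_p\colon{\rm L}(Y,X)\otimes\wedge^p L\longrightarrow{\rm L}(Y,X\otimes\wedge^p L),\qquad T\otimes\omega\longmapsto(y\mapsto T(y)\otimes\omega),$$
obtained by fixing a basis of $\wedge^p L$. Taking $Y=X$, I would first note that $\Phi_p$ restricts to an isomorphism of ${\rm K}(X)\otimes\wedge^p L$ onto ${\rm K}(X,X\otimes\wedge^p L)$, since a finite sum $\sum T_i\otimes\omega_i$ has image in the finite-dimensional-in-the-$\wedge L$-factor subspace and is compact iff each $T_i$ is (using that projection onto each basis component of $\wedge^p L$ is continuous). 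Passing to quotients yields degree-wise isomorphisms $\tilde\Phi_p\colon{\rm C}(X)\otimes\wedge^p L\to{\rm C}(X,X\otimes\wedge^p L)$.

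The second step is to check that the family $\{\tilde\Phi_p\}$ is a chain map. On the right-hand side the differential in degree $p$ is $\tilde L_{d_p(\rho)}$, the quotient of the operator $T\mapsto d_p(\rho)\circ T$. On the left, it is the Koszul differential of $\tilde L_\rho$. Taking a basic tensor $T\otimes\langle l_1\wedge\cdots\wedge l_p\rangle$ with $T\in{\rm L}(X)$, I would compute
\begin{align*}
d_p(\rho)\circ\Phi_p(T\otimes\langle l_1\wedge\cdots\wedge l_p\rangle)(x)
&=\sum_{k=1}^p(-1)^{k+1}\rho(l_k)T(x)\otimes\langle l_1\wedge\cdots\hat{l_k}\cdots\wedge l_p\rangle\\
&\quad+\sum_{i<j}(-1)^{i+j}T(x)\otimes\langle[l_i,l_j]\wedge\cdots\hat{l_i}\cdots\hat{l_j}\cdots\wedge l_p\rangle,
\end{align*}
and recognize the right-hand side as $\Phi_{p-1}$ applied to $d_p(L_\rho)(T\otimes\langle l_1\wedge\cdots\wedge l_p\rangle)$, because $\rho(l_k)T=L_{\rho(l_k)}(T)$ by definition. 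Hence $L_{d_p(\rho)}\circ\Phi_p=\Phi_{p-1}\circ d_p(L_\rho)$ at the level of ${\rm L}(X)$, and passing to the Calkin quotient gives $\tilde L_{d_p(\rho)}\circ\tilde\Phi_p=\tilde\Phi_{p-1}\circ d_p(\tilde L_\rho)$.

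Naturality with respect to $X$ and $\rho$ is automatic from the formula, since $\Phi_p$ is defined by a universal construction involving only the finite-dimensional factor $\wedge^p L$. I do not expect any genuine obstacle here; the only step that requires some care is the identification of the compact parts under $\Phi_p$, which is where the finite dimensionality of $\wedge^p L$ is essential, and the bookkeeping of the Jacobi-type terms in the Koszul differential, which however pass through $\Phi_p$ untouched because they act only on the $\wedge L$-factor.
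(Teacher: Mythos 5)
Your proposal is correct and follows essentially the same route as the paper: define the degree-wise isomorphism $\Phi_p(T\otimes\omega)(x)=T(x)\otimes\omega$, observe it carries ${\rm K}(X)\otimes\wedge^p L$ onto ${\rm K}(X,X\otimes\wedge^p L)$ using the finite dimensionality of $\wedge^p L$, and then pass to Calkin quotients after checking compatibility with the differentials. The only cosmetic difference is that the paper cites [21, Satz 3.1.4] for the chain-map identity $L_{d_p(\rho)}\circ\Phi_p=\Phi_{p-1}\circ d_p(L_\rho)$ rather than computing it directly as you do.
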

\begin{proof}

\indent First of all, we consider the complexes $({\rm L}(X)\otimes
\wedge L, d(L_{\rho}))$ and ${\rm L}(X,(X\otimes\wedge L, 
d(\rho)).)$. 
In [21, Satz 3.1.4] it was proved that these
two complexes are naturally isomorphic. Indeed, if $\Phi_p\colon {\rm L}(X)\otimes
\wedge^p L\to{\rm L}(X,X\otimes\wedge^p L)$
is the map
$$
\Phi_p(T\otimes \xi)(x)=T(x)\otimes \xi,
$$
$T\in {\rm L}(X)$, $\xi\in\wedge^p L$ and $x\in X$,
then $\Phi:({\rm L}(X)\otimes\wedge L, d(L_{\rho}))\to
{\rm L}(X,(X\otimes\wedge L, d(\rho)).)$
is an isomorphism of chain complexes. In particular, the following 
diagram is commutative
$$
\CD
{\rm L}(X)\otimes\wedge^p L @>d_p(L_{\rho})>>
{\rm L}(X)\otimes\wedge^{p-1}L\\
@VV\Phi_p  V@VV\Phi_{p-1}V\\
{\rm L}(X,X\otimes\wedge^p L)@>L_{d_p}>>
{\rm L}(X,X\otimes\wedge^{p-1} L).
\endCD
$$

\indent However, since $\Phi_p$ is an isomorphism, 
an easy calculation shows that $\Phi_p({\rm K}(X)\otimes\wedge^p L)
={\rm K}(X,X\otimes\wedge^p L)$. Thus, 
we may consider the quotient map associated to $\Phi_p$,
$\tilde{\Phi}_p\colon{\rm C}(X)\otimes
\wedge^p L\to{\rm C}(X,X\otimes\wedge^p L)$, which is
an isomorphism.\par
 
\indent In addition, it is clear that $d_p(L_{\rho})({\rm K}(X)\otimes
\wedge^p L)\subseteq{\rm K}(X)\otimes\wedge^{p-1} L$. 
Furthermore, if $\pi_p\colon {\rm L}(X)\otimes\wedge^p L\to
{\rm C}(X)\otimes\wedge^p L$ denotes the projection map, it is 
easy to prove that the quotient map associated to $d_p(L_{\rho})$ 
coincides with
$d_p(\tilde{L}_{\rho})$, i.e., we have the commutative 
diagram
$$
\CD
{\rm L}(X)\otimes\wedge^p L @>d_p(L_{\rho})>>{\rm L}(X)
\otimes\wedge^{p-1}L\\
@VV\pi_p  V@VV\pi_{p-1}V\\
{\rm C}(X)\otimes\wedge^p L@>d_p(\tilde{L}_{\rho})>>
{\rm C}(X)\otimes\wedge^{p-1} L.
\endCD
$$
In particular, the family $(\pi_p)_{0\le p\le n}\colon ({\rm L}(X)\otimes\wedge L, 
d(L_{\rho}))\to({\rm C}(X)\otimes\wedge L, 
d(\tilde{L}_{\rho}))$ is a morphism of chain complexes. 

\indent Thus, we obtain the commutative diagram
$$
\CD
{\rm C}(X)\otimes\wedge^p L @>d_p(\tilde{L}_{\rho})>>
{\rm C}(X)\otimes\wedge^{p-1}L\\
@VV\tilde{\Phi}_p  V@VV\tilde{\Phi}_{p-1}V\\
{\rm C}(X,X\otimes\wedge^p L)@>\tilde{L}_{d_p}>>
{\rm C}(X,X\otimes\wedge^{p-1} L).
\endCD
$$
Finally, since for each $p$, $0\le p\le n$, the map $\tilde{\Phi}_p$ 
is an isomorphism, the family $\tilde{\Phi}=(\tilde{\Phi}_p)_{0\le p
\le n}\colon ({\rm C}(X)\otimes \wedge L, d(\tilde{L}_{\rho}))
\to {\rm C}(X,(X\otimes
\wedge L ,d(\rho)).)$ is an isomorphism of chain complexes.

\end{proof}

\indent In order to show that the sets  
introduced in Definition 3.6 are really joint spectra, 
we need to prove a similar isomorphism to the one in 
Proposition 3.7, but which is related to right multiplication instead of 
being related to left 
multiplication. We first review some results necessary 
for our objective.\par 

\indent Let $(X,d)$ be a finite complex of Banach spaces and bounded 
linear operators and $Y$ a complex Banach space. We denote the complex 
$$
0\to{\rm L}(X_0,Y)\xrightarrow{R_{d_1}}{\rm L}(X_1,Y)\to\ldots\to{
\rm L}(Y, X_{n-1})\xrightarrow{R_{d_n}}{\rm L}(X_n,Y)\to 0
$$
by ${\rm L}(X.,Y)$, where $R_{d_p}$ denotes the induced operator of right multiplication 
with $d_p$, i.e., for $T\in{\rm L}(X_{p-1}, Y)$, $R_{d_p}(T)= T\circ d_p
\in{\rm L}(X_p,Y)$, $0\le p\le n$; see [13].\par 

\indent Moreover, it is clear that $R_{d_p}({\rm K}(X_{p-1},Y))
\subseteq {\rm K}(X_p,Y)$. Thus,
we may consider the complex ${\rm C}(X.,Y)=({\rm C}(X_p,Y),
\tilde{R}_{d_p})$, where ${\rm C}(X_p,Y)={\rm L}(X_p,Y)/
{\rm K}(X_p,Y)$ and $\tilde{R}_{d_p}$ is the quotient operator 
associated to $R_{d_p}$; see [13].\par

\indent On the other hand, if $L$, $X$, and $\rho\colon L\to{\rm L}(X)$ 
are as in section 2, then we consider the 
representation 
$$
R_{\rho}\colon L^{op}\to {\rm L}({\rm L}(X)),\hbox{}l\mapsto 
R_{\rho(l)},
$$
where $R_{\rho(l)}$ denotes the right multiplication
operator associated to $\rho(l)$, $l\in L^{op}$; see Chapter 3, Section 3 of  [21]. \par

\indent Furthermore, since $R_{\rho(l)}({
\rm K}(X))\subseteq {\rm K}(X)$, it is possible to consider the 
representation
$$
\tilde{R}_{\rho}\colon L^{op}\to {\rm L}({\rm C}(X)),
$$   
where $\tilde{R}_{\rho}(l)$ is the quotient 
operator associated to $R_{\rho(l)}$.\par 

\indent Now we consider the Chevalley-Eilenberg 
cochain complex associated to the representation 
$\tilde{R}_{\rho}\colon L^{op}\to {\rm L}({\rm C}(X))$,
i.e., $ChE(\tilde{R}_{\rho})=({\rm Hom}(\wedge L, {\rm C}(X)),$
$\delta(\tilde{R}_{\rho}))$, where $\delta_p(\tilde{R}_{\rho})
\colon {\rm Hom}(\wedge^p L,{\rm C}(X))
\to {\rm Hom}(\wedge^{p+1} L,$ ${\rm C}(X))$ is the map defined by

\begin{align*}
(\delta_p(\tilde{R}_{\rho})f)&(x_1\ldots x_{p+1})=\sum_{i=1}^{p+1}
(-1)^{i-1}\tilde{R}_{\rho}(x_i)f(x_1\ldots\hat{x_i}\ldots x_{p+1})\\
                 &+\sum_{1\le i<k\le p+1}(-1)^{i+k}f([x_i,x_k].x_1\ldots
\hat{x_i}\ldots\hat{x_k}\ldots x_{p+1}),\\
\end{align*}

for $f\in {\rm Hom}(\wedge^p L, {\rm C}(X))$ and $x_i\in L^{op}$, $1\le i
\le p+1$; see [21, Satz und Definition 2.1.9].\par 

\indent In the following proposition we relate 
the complexes $ChE(\tilde{R}_{\rho})$ and ${\rm C}((X\otimes
\wedge L ,d(\rho)).,X)$.\par

\begin{pro} The complexes $ ChE(
\tilde{R}_{\rho})$ and ${\rm C}((X\otimes\wedge L ,d(\rho)).,X)$ 
are naturally isomorphic.\end{pro}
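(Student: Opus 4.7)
The plan is to mimic the two-stage strategy of Proposition 3.7. For each $p$ with $0\le p\le n$, define the natural map
\[
\Psi_p\colon {\rm Hom}(\wedge^p L, {\rm L}(X)) \to {\rm L}(X\otimes\wedge^p L, X)
\]
by $\Psi_p(g)(x\otimes \xi)=g(\xi)(x)$ for $g\in {\rm Hom}(\wedge^p L, {\rm L}(X))$, $x\in X$, $\xi\in \wedge^p L$, extended by linearity. Since $\wedge^p L$ has finite dimension $N=\binom{n}{p}$, choosing a basis of $\wedge^p L$ identifies both spaces canonically with ${\rm L}(X)^N$ in such a way that $\Psi_p$ becomes the identity; in particular $\Psi_p$ is a Banach space isomorphism.

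Next I verify that $\Psi=(\Psi_p)$ is a morphism of cochain complexes, i.e.\ $R_{d_{p+1}(\rho)}\circ \Psi_p = \Psi_{p+1}\circ \delta_p(R_\rho)$. This is a direct calculation: evaluating both sides on a simple tensor $x\otimes l_1\wedge\cdots\wedge l_{p+1}$ and expanding via the explicit formulas for the Koszul differential $d_{p+1}(\rho)$ and the Chevalley--Eilenberg differential $\delta_p(R_\rho)$, the two sums involving $\rho(l_k)$ coincide because $R_\rho(l_k)(S)(x)=S(\rho(l_k)x)$, while the bracket sums coincide once one observes that the brackets in $\delta_p(\tilde R_\rho)$ are taken in $L^{op}$ and therefore correspond, through the opposite-algebra sign convention, to those appearing in $d_{p+1}(\rho)$. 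I then show that $\Psi_p$ restricts to an isomorphism ${\rm Hom}(\wedge^p L, {\rm K}(X)) \xrightarrow{\sim} {\rm K}(X\otimes \wedge^p L, X)$; under the identifications above, both ideals become ${\rm K}(X)^N$, since a bounded operator from the finite direct sum $X^N$ to $X$ is compact if and only if each of its $N$ component operators on $X$ is compact.

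Since $\delta_p(R_\rho)$ and $R_{d_{p+1}(\rho)}$ both preserve their respective compact ideals, $\Psi_p$ descends to an isomorphism
\[
\tilde\Psi_p\colon {\rm Hom}(\wedge^p L, {\rm C}(X)) \xrightarrow{\sim} {\rm C}(X\otimes \wedge^p L, X),
\]
and the chain-map property transfers to the quotients, yielding the desired natural isomorphism of cochain complexes $ChE(\tilde R_\rho) \cong {\rm C}((X\otimes \wedge L, d(\rho)).,X)$. The only substantive step is the chain-map verification, and the only real subtlety there is the sign bookkeeping, since $\tilde R_\rho$ is a representation of $L^{op}$ while $d(\rho)$ uses the bracket of $L$; should a residual global sign emerge, it can be absorbed by composing $\Psi_p$ with the wedge-reversal $l_1\wedge\cdots\wedge l_p \mapsto l_p\wedge\cdots\wedge l_1$.
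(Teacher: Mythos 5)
Your proof follows essentially the same two-stage strategy as the paper's: first establish the isomorphism $\Psi_p\colon {\rm Hom}(\wedge^p L,{\rm L}(X))\to{\rm L}(X\otimes\wedge^p L,X)$ between $ChE(R_\rho)$ and ${\rm L}((X\otimes\wedge L,d(\rho)).,X)$, check compatibility with the compact ideals, and pass to the quotients by noting that both $\delta_p(R_\rho)$ and $R_{d_{p+1}}$ preserve compacts so that $\Psi_p$ descends to $\tilde\Psi_p$. The one point worth flagging is that the paper does not actually reprove the chain-map identity $\Psi_{p+1}\circ\delta_p(R_\rho)=R_{d_{p+1}}\circ\Psi_p$; it simply cites [21, Satz 3.1.6] for this, whereas you sketch the verification directly. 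Your closing remark that any residual sign ``can be absorbed by composing $\Psi_p$ with the wedge-reversal'' should be treated with care: the wedge-reversal introduces the sign $(-1)^{p(p-1)/2}$, which is degree-dependent but not of the right form to cancel a uniform sign on the bracket terms alone, so if a genuine discrepancy appeared it would require a more careful bookkeeping than that fix suggests. But since the underlying isomorphism is taken as established in [21], this hedging does not undermine the argument.
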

\begin{proof}

\indent First of all, we consider the representation $R_{\rho}\colon L^{op}
\to {\rm L}({\rm L}(X))$ and the Chevalley-Eilenberg cochain complex
associated to it, i.e., $ChE(R_{\rho})=({\rm Hom}(\wedge L,$
${\rm L}(X)),
\delta(R_{\rho}))$, where $\delta_p(R_{\rho})
\colon {\rm Hom}(\wedge^p L,{\rm L}(X))
\to {\rm Hom}(\wedge^{p+1} L,$ ${\rm L}(X))$ is the map defined by

\begin{align*}
(\delta_p(R_{\rho})f)&(x_1\ldots x_{p+1})=\sum_{i=1}^{p+1}
(-1)^{i-1}R_{\rho}(x_i)f(x_1\ldots\hat{x_i}\ldots x_{p+1})\\
                 &+\sum_{1\le i<k\le p+1}(-1)^{i+k}f([x_i,x_k].x_1\ldots
\hat{x_i}\ldots\hat{x_k}\ldots x_{p+1}),\\
\end{align*}

for $f\in {\rm Hom}(\wedge^p L, {\rm L}(X))$ and $x_i\in L^{op}$, $1\le i
\le p+1$; see [21, Satz und Definition  2.1.9].\par 

\indent Now, in [21, Satz 3.1.6] it was proved that 
the complexes $ChE(R_{\rho})$ and ${\rm L}((X\otimes\wedge L, 
d(\rho))., X)$ are naturally isomorphic. Indeed, if 
$\Psi_p\colon {\rm Hom}(\wedge^p L,{\rm L}$
$(X))\to{\rm L}(X\otimes\wedge^p L,X)$
is the map
$$
(\Psi_p(f))(x\otimes \xi)=f(\xi )(x),
$$
$f\in {\rm Hom}(\wedge^p L,{\rm L}(X))$, $\xi\in\wedge^p L$ and $x\in X$,
then $\Psi:ChE(R_{\rho})\to
{\rm L}((X\otimes\wedge L, d(\rho)).,X)$
is an isomorphism of chain complex. In particular, the following 
diagram is commutative:
$$
\CD
{\rm Hom}(\wedge^p L,{\rm L}(X)) @>\delta_p(R_{\rho})>>
{\rm Hom}(\wedge^{p+1} L ,{\rm L}(X))\\
@VV\Psi_p  V@VV\Psi_{p+1}V\\
{\rm L}(X\otimes\wedge^p L, X)@>R_{d_{p+1}}>>
{\rm L}(X\otimes\wedge^{p+1} L,X).
\endCD
$$

\indent Since $\Psi_p$ is an isomorphism, 
an easy calculation shows that $\Psi_p({\rm Hom}(\wedge^p L, {\rm K}(X))$
$={\rm K}(X\otimes\wedge^p L, X)$. Thus, we may 
consider the quotient map associated to $\Psi_p$, $\tilde{\Psi}_p\colon {\rm Hom}(\wedge^p L, {\rm C}(X))
\to{\rm C}(X\otimes\wedge^p L, X)$, which is an isomorphism.\par
 
\indent In addition, it is clear that $\delta_p(R_{\rho})( {\rm Hom}( \wedge^p L,{\rm K} (X))
\subseteq {\rm Hom} (\wedge^{p+1}L, {\rm K}$  $(X))$. 
Furthermore, if $\pi_p\colon {\rm Hom} (\wedge^p L,{\rm L}(X))\to
{\rm Hom} (\wedge^p L, {\rm C}(X))$ denotes the projection map, it is 
easy to prove that the quotient map associated to $\delta_p(R_{\rho})$ 
coincides with
$\delta_p(\tilde{R}_{\rho})$, i.e., we have the commutative 
diagram
$$
\CD
{\rm Hom} (\wedge^pL, {\rm L}(X)) @>\delta_p(R_{\rho})>>{\rm Hom}(\wedge^{p+1} L,{\rm L}(X))
\\
@VV\pi_p  V@VV\pi_{p+1}V\\
{\rm Hom} (\wedge^p L,{\rm C}(X))@>\delta_p(\tilde{R}_{\rho})>>
{\rm Hom} (\wedge^{p+1} L,{\rm C}(X)).
\endCD
$$
In particular, the family $(\pi_p)_{0\le p\le n} \colon ChE(R_{\rho})
\to ChE(\tilde{R}_{\rho})$ is a morphism of chain complexes. 

\indent Thus, we obtain the commutative diagram
$$
\CD
{\rm Hom}( \wedge^p L, {\rm C}(X)) @>\delta_p(\tilde{R}_{\rho})>>
{\rm Hom}( \wedge^{p+1} L, {\rm C}(X))\\
@VV\tilde{\Psi}_p  V@VV\tilde{\Psi}_{p+1}V\\
{\rm C}(X\otimes\wedge^p L,X)@>\tilde{R}_{d_p}>>
{\rm C}(X\otimes\wedge^{p+1} L,X).
\endCD
$$
Finally, since for each $p$, $0\le p\le n$, the map $\tilde{\Psi}_p$ 
is an isomorphism, the family $\tilde{\Psi}=(\tilde{\Psi}_p)_{0\le p
\le n}\colon ChE(\tilde{R}_{\rho})\to {\rm C}((X\otimes
\wedge L ,d(\rho)).,X)$ is an isomorphism of chain complexes.

\end{proof}
 
\indent Now we state the main spectral properties of the essential
split joint spectra.\par

\begin{thm}Let $X$ be a complex Banach space, $L$ a 
complex 
solvable finite dimensional Lie algebra, and $\rho\colon L\to
{\rm L} (X)$ a 
representation of $L$ in $X$. Then \par

\noindent {\rm (i)} $sp_{\delta ,k,e}(\rho)=\sigma_{\delta ,k}(\tilde{L}_{\rho})$,\par

\noindent  {\rm (ii)} $sp_{\pi ,k,e}(\rho)=\sigma_{\delta ,k}(\tilde{R}_{\rho})+h$,\par

\noindent  {\rm (iii)} $sp_e(\rho)=\sigma_e(\tilde{L}_{\rho})=\sigma_e(
\tilde{R}_{\rho})+h$,\par

where $h$ is the character of $L$ considered in Theorem 3.4 
and $0\le k\le n$.\par
\end{thm}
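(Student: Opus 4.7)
The strategy is to combine the natural isomorphisms of Propositions 3.7 and 3.9 with Eschmeier's characterization of Fredholm splitness from [13]. The characterization, which needs to be invoked (or verified) at the outset, asserts that a finite complex $(X_\cdot,d)$ of Banach spaces is Fredholm split in degree $p$ if and only if $H_p({\rm C}(X,X_\cdot))=0$, and equivalently if and only if $H^p({\rm C}(X_\cdot ,X))=0$. The implication ``Fredholm split $\Rightarrow$ vanishing homology'' is direct: the compact remainder $k_p$ becomes zero in ${\rm C}(X,X_p)$ (since $L_{k_p}$ sends bounded operators to compact ones), so a Fredholm splitting descends to an honest splitting of ${\rm C}(X,X_\cdot)$ in degree $p$. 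The converse is the content of [13] and must be carried over essentially verbatim; this will be the main technical obstacle, since it requires a careful lifting argument combined with the observation that the identity of $X_p$ provides the compact remainder.

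For (i), fix $f\in L^*$ with $f(L^2)=0$ and note that $\tilde{L}_{\rho-f}=\tilde{L}_\rho-f\cdot I$. By Proposition 3.7 applied to the representation $\rho-f$, the complex $({\rm C}(X)\otimes\wedge L, d(\tilde{L}_\rho-f))$ is naturally isomorphic to ${\rm C}(X,(X\otimes\wedge L,d(\rho-f)).)$, so their homologies coincide degree by degree. The characterization above then yields $f\in sp_{p,e}(\rho)$ iff $H_p$ of either complex is nonzero, i.e.\ iff $f\in\sigma_p(\tilde{L}_\rho)$. Taking the union over $p=0,\ldots,k$ gives $sp_{\delta,k,e}(\rho)=\sigma_{\delta,k}(\tilde{L}_\rho)$.

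For (ii), the same strategy applies via Proposition 3.9, which identifies $ChE(\tilde{R}_{\rho-f})$ with ${\rm C}((X\otimes\wedge L, d(\rho-f)).,X)$. By the dual form of the characterization, $f\in sp_{p,e}(\rho)$ iff $H^p(ChE(\tilde{R}_{\rho-f}))\neq 0$. To convert the Chevalley-Eilenberg cohomology back into Koszul homology of the right multiplication representation, invoke the duality isomorphism of [21, Korollar 2.4.5] already used in Theorem 3.4: there is a character $h$ of $L$, depending only on the Lie structure, such that $H^p(ChE(\tilde{R}_{\rho-f}))\cong H_{n-p}({\rm C}(X)\otimes\wedge L, d(\tilde{R}_\rho-(f-h)))$. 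Consequently $f\in sp_{p,e}(\rho)$ iff $f-h\in\sigma_{n-p}(\tilde{R}_\rho)$, that is, $sp_{p,e}(\rho)=\sigma_{n-p}(\tilde{R}_\rho)+h$. Taking the union over $p=n-k,\ldots,n$ and reindexing yields $sp_{\pi,k,e}(\rho)=\sigma_{\delta,k}(\tilde{R}_\rho)+h$.

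Finally, (iii) is the special case $k=n$ of (i) and (ii), since $sp_{\delta,n,e}(\rho)=sp_{\pi,n,e}(\rho)=sp_e(\rho)$, $\sigma_{\delta,n}(\tilde{L}_\rho)=\sigma_e(\tilde{L}_\rho)$, and similarly for $\tilde{R}_\rho$. The only substantive step beyond the formal manipulations is the characterization of Fredholm splitness recalled in the first paragraph; once that is in hand, the rest of the argument is essentially a bookkeeping exercise threading together the two natural isomorphisms together with the character-shifted duality between chain and cochain Lie algebra complexes.
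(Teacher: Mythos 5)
Your proof follows essentially the same route as the paper: invoke Eschmeier's characterization of Fredholm splitness via vanishing homology of the quotient (Calkin) complex from [13, Proposition 2.4], transfer it along the natural isomorphisms of Propositions 3.7 and 3.8, and use the character-shifted duality to convert the Chevalley--Eilenberg cochain side back into Koszul homology for the $\pi$-spectra, with (iii) being the case $k=n$. A small label slip: in treating (ii) you cite ``Proposition 3.9,'' which is the theorem itself; the isomorphism $ChE(\tilde{R}_{\rho-f})\cong {\rm C}((X\otimes\wedge L,d(\rho-f))_{\cdot},X)$ you need is Proposition 3.8.
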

\begin{proof}
 
\indent Since the argument in [13, Proposition 2.4(a-iii)] 
applies in the non-commutative case, according to Proposition 3.7 we have 
$$
sp_{\delta ,k,e}(\rho)=\sigma_{\delta ,k}(\tilde{L}_{\rho}).
$$
\indent In addition, since the argument in [13, Proposition 2.4(b-iii)] applies
in the non-commutative case, if $h$ is the character of $L$ 
considered in Theorem 3.4 (see [5, Theorem 1] and [21, Korollar 2.4.5]), 
then according to Proposition 3.8 and [21, Satz 2.4.4] we have 
$$
sp_{\pi ,k,e}(\rho)=\sigma_{\delta ,k}(\tilde{R}_{\rho})+h.
$$
\indent The third statement is clear.\par

\end{proof}

\begin{thm}  Let $X$ be a complex Banach space, 
$L$ a complex 
solvable  finite dimensional Lie algebra, and $\rho\colon L\to 
{\rm L}(X)$ 
a representation of $L$ in $X$. Then, 
the sets $sp_e(\rho)$,
$sp_{\delta ,k, e}(\rho)$, and $sp_{\pi , k ,e }(\rho)$
are compact non-void subsets of $L^*$ that have the 
projection property.
\end{thm}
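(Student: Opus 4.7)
The plan is to use Theorem 3.9 as a reduction device. That theorem identifies each of $sp_{\delta,k,e}(\rho)$, $sp_{\pi,k,e}(\rho)$, $sp_e(\rho)$ with a (translate of a) Słodkowski joint spectrum of the induced representation $\tilde{L}_\rho$ or $\tilde{R}_\rho$ of $L$, respectively $L^{op}$, on the Calkin space $\mathrm{C}(X)=\mathrm{L}(X)/\mathrm{K}(X)$. The compactness and non-voidness announced for the non-essential Słodkowski joint spectra in Section 2 (and recorded, e.g., in [5], [20], [21, Satz 2.11.5]) therefore transfer immediately: compact non-void subsets of $L^*$ are preserved by the translation by the character $h$.

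Next I would verify the projection property. Fix an ideal $I$ of $L$ (the nilpotent/subalgebra case is handled exactly as in the proof of Theorem 3.2 via the chain $L^k+I$, so I restrict attention to the ideal case). The key elementary observation is that the restriction of $\tilde{L}_\rho$ to $I$ coincides with the representation $\tilde{L}_{\rho\mid I}$ obtained from the restricted representation, and similarly $\tilde{R}_\rho\mid I=\tilde{R}_{\rho\mid I}$. Applying the projection property for $\sigma_{\delta,k}$ (known in the solvable setting) together with Theorem 3.9(i) gives
$$
\pi(sp_{\delta,k,e}(\rho))=\pi(\sigma_{\delta,k}(\tilde{L}_\rho))=\sigma_{\delta,k}(\tilde{L}_\rho\mid I)=\sigma_{\delta,k}(\tilde{L}_{\rho\mid I})=sp_{\delta,k,e}(\rho\mid I).
$$

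For $sp_{\pi,k,e}(\rho)$ the situation is slightly more delicate because of the shift by $h$. Here I would argue as in the proof of Theorem 3.5: by [8, Chapter 5, Section 3, Corollaire 3] one may reduce to the case $\dim I=n-1$, and then by [5, Theorem 1] (or [21, Korollar 2.4.5]) the characters $h$ and $h_I$ associated with $L$ and $I$ satisfy $h\mid I=h_I$. Combining this with Theorem 3.9(ii) and the projection property for $\sigma_{\delta,k}$ applied to $\tilde{R}_\rho$ yields
$$
\pi(sp_{\pi,k,e}(\rho))=\pi(\sigma_{\delta,k}(\tilde{R}_\rho))+h\mid I=\sigma_{\delta,k}(\tilde{R}_{\rho\mid I})+h_I=sp_{\pi,k,e}(\rho\mid I).
$$
The corresponding identity for $sp_e(\rho)$ is the special case $k=n$ of either formula, in view of Theorem 3.9(iii).

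The main conceptual point, and the only place some care is needed, is the compatibility of the induced representations with restriction to ideals, together with the compatibility $h\mid I=h_I$ of the twisting characters; once these are in place, everything reduces to the already established results of Section 2 through Theorem 3.9. I do not expect any serious technical obstacle beyond checking these compatibilities and invoking the nilpotent-case argument of Theorem 3.2 to pass from ideals to subalgebras.
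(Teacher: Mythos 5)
Your proposal is correct and matches the paper's intended approach: the paper's proof is a one-line sketch that defers to Theorem 3.9, the known properties of the Słodkowski joint spectra, the behavior of $h$ under restriction to ideals, and the argument of Theorem 3.5, and you have simply filled in exactly those details (the compatibilities $\tilde{L}_\rho\mid I=\tilde{L}_{\rho\mid I}$, $\tilde{R}_\rho\mid I=\tilde{R}_{\rho\mid I}$, and $h\mid I=h_I$, plus the reduction to codimension-one ideals and the $L^k+I$ chain for the nilpotent case).
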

\begin{proof}

\indent The main properties of the essential split joint spectra
 may be deduced from the corresponding
ones of the S\l odkowski and the Taylor joint spectra, and from the
particular behavior of the character $h$ with respect to Lie ideals of $L$; see
the proof of Theorem 3.5.

\end{proof}
\indent Finally, in the following proposition we consider
two nilpotent Lie algebras and two representations of the algebras
in a complex Banach space 
related by an epimorphism, and we describe the connection 
between the joint spectra of the representations. We 
need this result for nilpotent and commutative systems of
operators. In addition, these results provide an
extension of [21, Satz 2.7.4]
and [21, Korollar 3.1.10] for representations of nilpotent Lie algebras, 
from the Taylor 
to the S\l odkowski joint spectra and from the usual split spectrum to
the split joint spectra. Moreover,
we consider the corresponding essential joint spectra
and prove similar characterizations. \par

\begin{pro} Let $X$ be a complex Banach space,
$L_1$ and $L_2$ two complex nilpotent finite dimensional Lie algebras, $\rho_1
\colon L_1\to{\rm L}(X)$ a representation of $L_1$, and $f\colon
L_2\to L_1$ a Lie algebra epimorphism. Then, if we consider
the representation $\rho_2=\rho_1\circ f\colon L_2
\to{\rm L}(X)$, we have \par

\noindent {\rm (i)}$\sigma_{\delta ,k}(\rho_2)=\sigma_{\delta ,k}(\rho_1)\circ f$,
$\sigma_{\pi ,k}(\rho_2)=\sigma_{\pi ,k}(\rho_1)\circ f$,par
 
\noindent  {\rm (ii)}$\sigma_{\delta ,k, e}(\rho_2)=\sigma_{\delta ,k ,e}(\rho_1)\circ f$,
$\sigma_{\pi ,k ,e}(\rho_2)=\sigma_{\pi ,k ,e}(\rho_1)\circ f$,\par

\noindent  {\rm (iii)}$sp_{\delta ,k}(\rho_2)=sp_{\delta ,k}(\rho_1)\circ f$,
$sp_{\pi ,k}(\rho_2)=sp_{\pi , k}(\rho_1)\circ f$,\par

\noindent  {\rm (iv)}$sp_{\delta ,k ,e}(\rho_2)=sp_{\delta ,k ,e}(\rho_1)\circ f$,
$sp_{\pi ,k ,e}(\rho_2)=\sigma_{\pi ,k ,e}(\rho_1)\circ f$,\par

where, $\sigma_*(\rho_1)\circ f=\{\alpha\circ f: \alpha\in
\sigma_*(\rho_1)\}$ and $sp_*(\rho_1)\circ f=\{\alpha\circ f: \alpha\in
sp_*(\rho_1)\}$.\par
\end{pro}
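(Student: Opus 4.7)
The approach is a two-stage reduction. First I use the projection property to force every spectral character to vanish on $K := \ker f$, so that it factors through $L_1$; second I compare Koszul complexes degree by degree and transfer the known Taylor and split cases from [21, Satz 2.7.4] and [21, Korollar 3.1.10] to all eight variants.

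For the first stage, $K$ is an ideal, hence a subalgebra, of the nilpotent algebra $L_2$, and $\rho_2|_K = \rho_1\circ f|_K = 0$. For the zero representation of $K$ on $X$, the Koszul differential induced by a character $\lambda\in K^*$ is, in each degree, a linear combination of scalar operators $\lambda(k_i)I_X$; when $\lambda\neq 0$ the complex is exact, splits with bounded homotopy, is Fredholm split with zero compact perturbation, and all ranges are closed. Hence each of the eight restricted joint spectra of $\rho_2|_K$ is contained in $\{0\}\subseteq K^*$. Applying the projection property for nilpotent subalgebras (Theorems 2.4, 3.2, 3.5 and 3.10 of this paper, together with [21, Satz 2.11.5]) to the restriction map $\pi\colon L_2^*\to K^*$, every $g$ in any of the spectra of $\rho_2$ vanishes on $K$. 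Since $f$ is an epimorphism, $f(L_2^2) = L_1^2$, so any such $g$ equals $h\circ f$ for a unique character $h$ of $L_1$.

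For the second stage pick a vector space section $s\colon L_1\to L_2$ of $f$; then $L_2 = s(L_1)\oplus K$ and $\wedge L_2\cong \wedge L_1\otimes \wedge K$ as graded vector spaces. Because $(\rho_2 - h\circ f)|_K\equiv 0$, the $K$-directions contribute to the Koszul differential $d(\rho_2 - h\circ f)$ only through bracket terms. Following the computation of [21, Satz 2.7.4] and [21, Korollar 3.1.10] in the Taylor/split setting, a block-triangular change of basis on $\wedge L_2$ built from the descending central series of $L_2$ diagonalizes these bracket contributions and produces a bounded invertible isomorphism of chain complexes between $(X\otimes\wedge L_2, d(\rho_2 - h\circ f))$ and $(X\otimes \wedge L_1, d(\rho_1 - h))\otimes (\wedge K, 0)$. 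Consequently $H_p(\rho_2 - h\circ f)\cong \bigoplus_{a+b=p} H_a(\rho_1 - h)\otimes \wedge^b K$, and closed range, bounded splittings and compact-perturbation splittings all transfer between the two complexes.

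From this isomorphism each of the eight identities becomes a bookkeeping exercise. For $\delta$-type spectra, $H_p(\rho_2 - h\circ f)\neq 0$ iff some $H_a(\rho_1 - h)\neq 0$ with $a\le p$ and $p-a\le \dim K$; summing over $p=0,\dots,k$ yields $\sigma_{\delta,k}(\rho_2)=\sigma_{\delta,k}(\rho_1)\circ f$. For $\pi$-type spectra one uses $\dim L_2=\dim L_1+\dim K$ to re-index $p = n_2-k+i$ as $a = n_1-k+j$. The essential versions follow because $\wedge^b K$ is finite-dimensional, so tensoring preserves infinite-dimensional homology degree by degree and compactness of residual splittings. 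The main obstacle is verifying that the change of basis really is bounded invertible on each $X\otimes \wedge^p L_2$ and preserves compactness of remainder terms; this is where the nilpotency of $L_2$ is essential, as it guarantees the bracket correction is a finite-step nilpotent perturbation of the identity, hence boundedly invertible with inverse a finite polynomial.
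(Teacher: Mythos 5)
Your Stage~1 is a legitimate alternative to the paper's route: the paper obtains the factorization $\beta=\alpha\circ f$ by invoking the already-known Taylor case [21, Satz 2.7.4], whereas you derive it from the projection property applied to the ideal $K=\ker f$, on which $\rho_2$ vanishes identically, together with the observation that the zero representation has joint spectrum $\{0\}$. Both give the needed reduction.

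Stage~2, however, contains a genuine error. You assert a chain-complex isomorphism
$$
(X\otimes\wedge L_2,\, d(\rho_2-h\circ f))\ \cong\ (X\otimes\wedge L_1,\, d(\rho_1-h))\otimes(\wedge K, 0),
$$
from which you read off $H_p(\rho_2-h\circ f)\cong\bigoplus_{a+b=p}H_a(\rho_1-h)\otimes\wedge^b K$. This cannot hold. Take $L_1=0$, $f=0$, so $\rho_1$ is the trivial representation of the zero algebra, $K=L_2$, and $\rho_2$ is the zero representation of $L_2$ on $X$. With $h=0$ the claimed isomorphism reduces to $(X\otimes\wedge L_2, d(\rho_2))\cong X\otimes(\wedge L_2,0)$, so that $H_p$ would be $X\otimes\wedge^p L_2$ for every $p$. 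But the differential $d(\rho_2)$ is precisely the Chevalley--Eilenberg homology differential of $L_2$ with trivial coefficients tensored with $X$, and for a non-abelian nilpotent $L_2$ (e.g.\ the Heisenberg algebra) this differential is nonzero, so $\dim H_p(\rho_2)<\dim X\cdot\binom{\dim L_2}{p}$ for some $p$. A linear change of basis on $\wedge L_2$ cannot change homology dimensions, so no such isomorphism exists. In general the internal brackets of $K$, the 2-cocycle $\kappa$ measuring the failure of your section $s$ to be a Lie morphism, and the adjoint action of $s(L_1)$ on $K$ all survive as genuine contributions to $d(\rho_2-h\circ f)$ and cannot be conjugated away; the precise relationship is that of a filtered complex with an associated spectral sequence, not a tensor product.

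This matters for the proof because your bookkeeping for all eight identities rests entirely on that decomposition. The paper instead (a) cites a refinement of [16, Propositions 2.5--2.6] to get exactness-up-to-degree-$k$ equivalence between the two Koszul complexes (which is exactly the weaker degree-range statement one needs, not a full homology decomposition); (b) for the $\pi$-Słodkowski and essential $\pi$-Słodkowski spectra it switches to the adjoint representations $\rho_i^*$ and transfers via the duality of Theorem~3.4, [5, Theorem~7] and [21, Lemma~2.11.4], noting $\rho_2^*=\rho_1^*\circ f$; and (c) for the split and essential split spectra it reduces to the Taylor/Słodkowski cases of the multiplication representations $L_{\rho}$, $R_{\rho}$ and their Calkin quotients $\tilde L_{\rho}$, $\tilde R_{\rho}$ via [21, Satz~3.1.5], [21, Satz~3.1.7] and Theorem~3.9, again observing that $L_{\rho_2}=L_{\rho_1}\circ f$, $\tilde L_{\rho_2}=\tilde L_{\rho_1}\circ f$, etc. Your sketch never engages with the split conditions directly and waves at ``compactness of residual splittings,'' which would need the (false) tensor-product decomposition to make sense. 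If you want a self-contained route you should either reproduce the [16]-style spectral-sequence argument at the level of exactness/Fredholmness in a degree range, or adopt the paper's reduction to the left/right multiplication representations.
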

\begin{proof}

\indent A careful inspection of [16, Proposition 2.5] and [16, Proposition 2.6] shows that
it is possible to refine the arguments of these results in order to prove
that the Koszul complex of $\rho_1$ is exact for $p=0,\ldots ,k$
if and only if the Koszul complex of $\rho_2$ is exact for $p=0,
\ldots ,k$. In particular, if $\alpha \in \sigma_{\delta ,k}(\rho_1)$,
then $\rho_2-\alpha\circ f=(\rho_1-\alpha)\circ f$, which implies 
that $\sigma_{\delta ,k}(\rho_1)\circ f\subseteq
\sigma_{\delta ,k}(\rho_2)$. On the other hand, since
$\sigma_{\delta , k}(\rho_2)\subseteq\sigma(\rho_2)$,
by [21, Satz 2.7.4], if $\beta\in\sigma_{\delta ,k}(\rho_2)$,
then there is $\alpha\in \sigma(\rho_1)$ such that $\beta=\alpha
\circ f$. However, by the above observation, since $\rho_2-\beta=
(\rho_1-\alpha)\circ f$, $\alpha\in\sigma_{\delta ,k}(\rho_1)$.
Thus, $\sigma_{\delta ,k}(\rho_2)=\sigma_{\delta ,k}(\rho_1)\circ f$.\par

\indent In addition, a careful inspection of [16, Proposition 2.5] and [16, Proposition 2.6]
shows that
it is possible to extend the arguments developed in these results
for the essential $\delta$-S\l odkowski
joint spectra, i.e., it is possible to prove that the Koszul complex of 
$\rho_1$ is Fredholm for $p=0,\ldots ,k$ if and only if the 
Koszul complex of $\rho_2$ is Fredholm for $p=0,\ldots ,k$.
In particular, we may apply the same argument that we developed for the joint 
spectra $\sigma_{\delta ,k}$ to the 
joint spectra
$\sigma_{\delta ,k ,e}$. Thus, $\sigma_{\delta ,k ,e}(\rho_2)=
\sigma_{\delta ,k ,e}(\rho_1)\circ f$.\par

Now if we consider the 
representations defined in Theorem 3.4 $\rho_1^*\colon L_1^{op}\to{\rm L}(X{'})$
and $\rho_2^*\colon L_1^{op}\to{\rm L}(X{'})$,
then $\rho_2^*=\rho_1^*\circ f$. However,
by [5, Theorem 7], [21, Lemma  2.11.4] and Theorem 3.4 we have 
$\sigma_{\pi ,k}(\rho_1)\circ f=\sigma_{\pi ,k}(\rho_2)$
and $\sigma_{\pi ,k ,e}(\rho_2)=\sigma_{\pi ,k ,e}(\rho_1)\circ f$.
\par

\indent Furthermore, if we consider the representations
$L_{\rho_i}\colon L_i\to{\rm L}({\rm L}(X))$ and
$R_{\rho_i}\colon L_i^{op}$
$\to{\rm L}({\rm L}(X))$,
for $i=1$, $2$, then 
$L_{\rho_2}=L_{\rho_1}\circ f$ and 
$R_{\rho_2}=R_{\rho_1}\circ f$. Then,
by [21, Satz 3.1.5] and [21, Satz 3.1.7] we have 
$sp_{\delta ,k}(\rho_2)=sp_{\delta ,k}(\rho_1)\circ f$ and
$sp_{\pi ,k}(\rho_2)=sp_{\pi ,k}(\rho_1)\circ f$.\par

Finally, if we consider the representations
$\tilde{L}_{\rho_i}\colon L_i\to{\rm L}({\rm C}(X))$ and
$\tilde{R}_{\rho_i}\colon L_i^{op}\to{\rm L}({\rm C}(X))$,
for $i=1$, $2$, then  
$\tilde{L}_{\rho_2}=\tilde{L}_{\rho_1}\circ f$ and 
$\tilde{R}_{\rho_2}=\tilde{R}_{\rho_1}\circ f$. Then,
according to Theorem 3.9 we have 
$sp_{\delta ,k,e}(\rho_2)=sp_{\delta ,k,e}(\rho_1)\circ f$ and
$sp_{\pi ,k,e}(\rho_2)=sp_{\pi ,k,e}(\rho_1)\circ f$.\par

\end{proof}

\section{Tensor products of Banach spaces}

\indent In this section we review the definition and the main properties
of the 
tensor product of complex Banach spaces 
introduced by J. Eschmeier in [14]. In addition, we prove some
propositions necessary for our main results. 
\par

\indent A pair $\langle X, \tilde{X}\rangle$ of Banach spaces will be called a dual 
pairing, if
$$
(A)\hbox{  }\tilde{X}=X{'} \hbox{   or   }(B)\hbox{  }X=
\tilde{X}{'}.
$$

\indent In both cases, the canonical bilinear mapping is denoted by
$$
X\times\tilde{X}\to\Bbb C ,\hbox{   }(x,u)\mapsto\langle x,u\rangle.
$$

\indent If $\langle X,\tilde{X}\rangle$ is a dual pairing, we consider the 
subalgebra ${\mathcal L}(X)$ of ${\rm L}(X)$ consisting of all 
operators $T\in{\rm L}(X)$ for which there is an operator $T{'}
\in {\rm L}(\tilde{X})$ with
$$
\langle Tx,u\rangle=\langle x,T{'}u\rangle,
$$
for all $x\in X$ and $u\in\tilde{X}$. It is clear that if the dual pairing 
is $\langle X,X{'}\rangle$, then ${\mathcal L}(X)={\rm L}(X)$, and that if the 
dual pairing is $\langle X{'},X\rangle$, then ${\mathcal L}(X)=\{T^*,\hbox{ }T
\in {\rm L}(\tilde{X})\}$. In particular, each operator of the form
$$
f_{y,v}\colon X\to X,\hbox{}x\mapsto\langle x,v\rangle y,
$$
is contained in ${\mathcal L}(X)$, for $y\in X$ and $v\in\tilde{X}$.
\par 

\indent Now we recall the definition of the tensor product introduced by J. 
Eschmeier in [14].\par

\begin{df} Given two dual pairings $\langle X,\tilde{X}\rangle$ and 
$\langle Y,\tilde{Y}\rangle$, a tensor product of the Banach spaces $X$ and $Y$ 
relative to the dual pairings $\langle X,\tilde{X}\rangle$ and 
$\langle Y,\tilde{Y}\rangle$ is a Banach space $Z$ together with 
continuous bilinear mappings
$$
X\times Y\to Z,\hbox{}(x,y)\mapsto x\otimes y;\hbox{  }{\mathcal L}(X)
\times{\mathcal L}(Y)\mapsto{\rm L}(Z),\hbox{}(T,S)\mapsto T\otimes S,
$$  
which satisfy the following conditions,

(T1)$\hbox{  }\parallel x\otimes y\parallel=\parallel x\parallel
\parallel y\parallel$,\par
(T2)$\hbox{  }T\otimes S(x\otimes y)=(Tx)\otimes(Sy)$,\par
(T3)$\hbox{  }(T_1\otimes S_1)\circ (T_2\otimes S_2)=(T_1T_2)
\otimes(S_1S_2),\hbox{}I\otimes I=I$,\par
(T4)$\hbox{  }Im(f_{x,u}\otimes I)\subseteq\{x\otimes y\colon\hbox{}y
\in Y\},\hbox{}Im(f_{y,v}\otimes I)\subseteq\{x\otimes y\colon\hbox{}x
\in X\}$.\par
\end{df}
\indent As in [14], we write $X\tilde{\otimes} Y$ instead of $Z$. In addition, as in [14] we have two 
applications of Definition 4.1, namely, 
the completion $X\tilde{\otimes_{\alpha}}Y$ of the 
algebraic tensor product of the
Banach spaces $X$ and $Y$ with respect to a quasi-uniform crossnorm 
$\alpha$, see [18], and an operator ideal between Banach spaces; see 
[14] and section 7.\par

\indent In order to prove our main
results we need to study the behavior of a split and Fredholm 
split complex of Banach spaces with respect to the procedure of
tensoring it with a fixed Banach space. We begin with some preparation
and then we prove our characterization.\par

\indent Let $(X,d)$ be, as in section 2, a complex of Banach spaces and bounded linear 
operators, and let us suppose that $(X,d)$ is Fredholm 
split for $p=0,\ldots ,k$. 
Then, by [13, Theorem 2.7] and its proof, the complex $(X,d)$ is Fredholm 
for $p=0,\ldots , k$ and $Ker (d_p)$ is a complemented 
subspace of $X_p$ for $p=1,\ldots , k+1$. In addition, if for $p=1,
\ldots ,k+1$ we decompose 
$X_p=Ker( d_p)\oplus L_p$, then for $p=1,\ldots , k$ we have
$X_p=R(d_{p+1})\oplus N_p\oplus L_p$, 
where $N_p$ is a finite dimensional subspace of $X_p$ such 
that $R(d_{p+1})\oplus N_p=Ker(d_p)$. Moreover,
for $p=0$ we know that $X_0=R(d_1)\oplus N_0$, where $N_0$
is a finite dimensional subspace of $X_0$; in particular,
we may define $L_0=0$. However, thanks to
these decompositions, for $p=0,\ldots , k$ 
there are well-defined operators $h_p\colon X_p  \to X_{p+1}$, 
such that
\par (i) $h_p\mid L_p=0$, $h_p\mid N_p=0$, $h_p\circ d_{p+1}=
I_p\mid L_{p+1}$, where $I_p$ denotes the identity operator of $X_p$,\par
(ii) $d_{p+1}h_p+h_{p-1}d_p=I_p-k_p$, where $k_p$ is  
the projector of $X_p$ with range $N_p$ and null space 
$R(d_{p+1})\oplus L_p$,\par
(iii) $h_ph_{p-1}=0$ for $p=1,\ldots ,k$.\par

\indent In addition, if the complex $(X,d)$ is split for $p=0,\ldots , k$, 
then it is exact for $p=0,\ldots ,k$, and in the above decompositions
$N_p=0$ for $p=0,\ldots ,k$. In particular, $k_p=0$
for $p=0,\ldots ,k$.
\par

\indent If there is a Banach space $Z$
such that for each $p\in \Bbb Z$ there
is an $n_p\in\Bbb N_0$ with $X_p=Z^{n_p}$,
and a Banach space Y such that there is 
a tensor products $Y\tilde{\otimes} Z$ relative to 
$\langle Y, Y{'}\rangle$ and $\langle Z,Z{'}\rangle$, then 
we may consider the chain complex
$$
Y\tilde{\otimes}X_{k+1}\xrightarrow{I\otimes d_{k+1}}
Y\tilde{\otimes}X_k \xrightarrow{I\otimes d_k} 
Y\tilde{\otimes} X_{k-1}\to\ldots\to 
Y\tilde{\otimes} X_1\xrightarrow{I\otimes d_1}
Y\tilde{\otimes}X_0\to 0,
$$
where $I$ denotes the identity of $Y$. Moreover, if for $p=0,\ldots ,k$
we consider the maps 
$I\otimes h_p\colon Y\tilde{\otimes} X_p\to Y\tilde{\otimes} 
X_{p+1}$,  then \par
(i) $I\otimes d_{p+1}\circ I\otimes h_p+I\otimes h_{p-1}\circ I
\otimes d_p=I-I\otimes k_p$, \par
(ii) $I\otimes h_p\circ I\otimes h_{p-1}=0$.\par
 
\indent It is worth noticing that the properties of the tensor product 
and the fact $X_p=Z^{n_p}$ imply that the
maps 
$I\otimes d_p$, $p=0,\ldots ,k+1$, and $I\otimes h_p$,
$p=0,\ldots ,k$, are well defined
and the compositions behave as usual. \par

\indent Similarly, we consider a chain complex that is split or
Fredholm split 
for $p= k,\ldots ,n$.\par 

\indent Let $(X,d)$ be, as in section 2, a complex of Banach spaces 
and bounded linear opertors, 
and let us suppose that $(X,d)$ is Fredholm split for $p=k,\ldots ,n$. 
Then, by [13, Theorem 2.7] and its proof, the complex $(X,d)$ is Fredholm 
for $p=k,\ldots , n$ and  
$R(d_{p+1})$ is a closed 
complemented subspace of $X_p$ for $p=k-1,\ldots , n-1$. In 
addition, for $p=k,\ldots ,n-1$ we may decompose $X_p=R( d_{p+1})\oplus N_p
\oplus L_p$, 
where $N_p$ is a finite dimensional subspace of $Ker(d_p)$
such that $Ker (d_p)=R(d_{p+1})\oplus N_p$. Moreover, for
$p=n$ we know that $X_n=N_n\oplus L_n$, where $N_n=
Ker (d_n)$, and for $p=k-1$ we define $N_{k-1}=0$
and $L_{k-1}$ such that $X_{k-1}=R(d_k)\oplus L_{k-1}$. 
However, thanks to these decompositions, for
$p=k-1,\ldots ,n$ there are well-defined operators 
$h_p\colon X_p  \to X_{p+1}$ such that
\par
(i) $h_p\mid L_p=0$, $h_p\circ d_{p+1}=I_p\mid L_{p+1}$, 
$h_p\mid N_p=0$, where $I_p$ denotes the identity operator of $X_p$,\par
(ii) $d_{p+1}h_p+h_{p-1}d_p=I_p-k_p$, for $p=k,\ldots ,n$, 
where $k_p$ is the projector of $X_p$
with range $N_p$ and null space $L_p
\oplus R(d_{p+1})$,\par
(iii) $h_ph_{p-1}=0$ for $p=k,\ldots ,n$.\par

\indent In addition, if the complex $(X,d)$ is split for $p=k,\ldots , n$,
it is exact for $p=k,\ldots ,n$, and in the above decompositions
$N_p=0$ for $p=k,\ldots ,n$. In particular, $k_p=0$
for $p=k,\ldots ,n$.\par

\indent If there is a Banach space $Z$
such that for each $p\in\Bbb Z$
there is an $n_p\in\Bbb N_0$ with $X_p=Z^{n_p}$, and 
a Banach space Y such that there is 
a tensor product $Y\tilde{\otimes} Z$ relative to 
$\langle Y, Y{'}\rangle$ and $\langle Z,Z{'}\rangle$, then
we may consider the chain complex
$$
0\to Y\tilde{\otimes}X_n\xrightarrow{I\otimes d_n}
Y\tilde{\otimes}X_{n-1}\to\ldots\to 
Y\tilde{\otimes} X_k\xrightarrow{I\otimes d_k}
Y\tilde{\otimes}X_{k-1}\to ,
$$
where $I$ denotes the identity of $Y$. Then, if for 
$p=k-1,\ldots ,n-1$ we consider the maps 
$I\otimes h_p\colon Y\tilde{\otimes} X_p\to Y\tilde{\otimes} 
X_{p+1}$, for $p=k,\ldots ,n$, we have\par
(i) $I\otimes d_{p+1}\circ I\otimes h_p+I\otimes h_{p-1}\circ I
\otimes d_p=I- I\otimes k_p$,\par
(ii) $I\otimes h_p\circ I\otimes h_{p-1}=0$.\par

\indent As before, the maps $I\otimes d_p$, $p=n,\ldots k$, and
$I\otimes h_p$, $p=k-1,\ldots n-1$, are well defined and the
compositions behave as usual.\par

\begin{pro} In the above conditions, for $p=0,\ldots ,k$ we have\par
{\rm (i)} $I\otimes h_p\circ I\otimes d_{p+1}=I\otimes h_pd_{p+1}$ 
is a projector defined in $Y\tilde{\otimes }X_{p+1}$. In particular, $Y\tilde
{\otimes} X_{p+1}=Ker(I\otimes h_pd_{p+1})\oplus R(I\otimes h_p
d_{p+1})$.\par
 {\rm (ii)} $Ker(I\otimes h_pd_{p+1})=Ker (I\otimes d_{p+1})$,
$R ( I\otimes h_pd_{p+1})= R (I\otimes h_p)$,
and $Ker(I\otimes h_p)=R(I\otimes h_{p-1})\oplus R(I\otimes k_p)$.
\par

\indent Similarly, for $p=k,\ldots ,n$ we have\par
{\rm (i)} $I\otimes d_p\circ I\otimes h_{p-1}=I\otimes d_ph_{p-1}$ 
is a projector defined in $Y\tilde{\otimes }X_{p-1}$. In particular, $Y\tilde
{\otimes} X_{p-1}=Ker(I\otimes d_ph_{p-1})\oplus R(I\otimes d_p
h_{p-1})$.\par
 {\rm (ii)} $Ker(I\otimes d_ph_{p-1})=Ker (I\otimes h_{p-1})$,
$R ( I\otimes d_ph_{p-1})= R (I\otimes d_p)$, and
$Ker(I\otimes h_p)=R(I\otimes h_{p-1})\oplus R(I\otimes k_p)$.\par
\end{pro}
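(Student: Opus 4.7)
The plan is to reduce everything to a handful of algebraic identities satisfied by $d_p$, $h_p$ and $k_p$ in the base spaces $X_p$, and then to transfer these identities to $Y\tilde{\otimes}X_\bullet$ by repeated application of axiom (T3) of the tensor product, which states $(T_1\otimes S_1)\circ(T_2\otimes S_2)=(T_1T_2)\otimes(S_1S_2)$ and $I\otimes I=I$. A direct inspection of the construction of the $h_p$'s preceding the proposition shows that, on $X_p$, the relation $d_{p+1}h_p+h_{p-1}d_p=I_p-k_p$ together with the decomposition $X_p=R(d_{p+1})\oplus L_p\oplus N_p$ makes $d_{p+1}h_p$, $h_{p-1}d_p$ and $k_p$ into three pairwise orthogonal continuous idempotents summing to $I_p$, with ranges $R(d_{p+1})$, $L_p$ and $N_p$ respectively. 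Moreover, one reads off the algebraic identities $h_pd_{p+1}h_p=h_p$, $d_{p+1}h_pd_{p+1}=d_{p+1}$, $h_ph_{p-1}=0$ and $h_pk_p=0$; in particular $h_pd_{p+1}$ is itself an idempotent on $X_{p+1}$ with range $R(h_p)$ and kernel $Ker(d_{p+1})$.

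Assertion (i) is then immediate: by (T3), $(I\otimes h_p)\circ(I\otimes d_{p+1})=I\otimes(h_pd_{p+1})$, and the right-hand side is a continuous idempotent on $Y\tilde{\otimes}X_{p+1}$; any continuous idempotent on a Banach space splits the space as the direct sum of its kernel and its range. For (ii), the identities $h_pd_{p+1}h_p=h_p$ and $d_{p+1}h_pd_{p+1}=d_{p+1}$ transfer via (T3) to $(I\otimes h_pd_{p+1})(I\otimes h_p)=I\otimes h_p$ and $(I\otimes d_{p+1})(I\otimes h_pd_{p+1})=I\otimes d_{p+1}$, which give the nontrivial inclusions $R(I\otimes h_p)\subseteq R(I\otimes h_pd_{p+1})$ and $Ker(I\otimes h_pd_{p+1})\subseteq Ker(I\otimes d_{p+1})$; the reverse inclusions follow from the factorisation $I\otimes h_pd_{p+1}=(I\otimes h_p)(I\otimes d_{p+1})$.

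For the description of $Ker(I\otimes h_p)$, I would apply (T3) to the orthogonal decomposition of $I_p$ to obtain three pairwise orthogonal continuous idempotents $I\otimes d_{p+1}h_p$, $I\otimes h_{p-1}d_p$, $I\otimes k_p$ on $Y\tilde{\otimes}X_p$ summing to the identity. If $(I\otimes h_p)(y)=0$, applying $I\otimes d_{p+1}$ on the left annihilates the $I\otimes d_{p+1}h_p$-component of $y$, so $y\in R(I\otimes h_{p-1}d_p)\oplus R(I\otimes k_p)$; the reverse inclusion uses $h_ph_{p-1}=0$ and $h_pk_p=0$. Finally $R(I\otimes h_{p-1}d_p)=R(I\otimes h_{p-1})$ is the same trick applied to $h_{p-1}d_ph_{p-1}=h_{p-1}$. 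The second half of the proposition, for $p=k,\ldots,n$, is entirely symmetric, with the roles of $d$ and $h$ interchanged; the same scheme of base-space identities plus (T3) yields all the assertions. The only delicate point I foresee is the verification of the orthogonality relations among $d_{p+1}h_p$, $h_{p-1}d_p$ and $k_p$ on $X_p$ itself; these are not deep but must be read off carefully from the three-summand decomposition preceding the proposition.
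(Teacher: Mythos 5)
Your proof is correct and follows essentially the same route as the paper's: the key computations $h_pd_{p+1}h_p=h_p$, $d_{p+1}h_pd_{p+1}=d_{p+1}$, $h_ph_{p-1}=0$, $h_pk_p=0$, transferred through (T3), are exactly what the paper uses, and your decomposition of an element of $Ker(I\otimes h_p)$ via the three commuting idempotents $I\otimes d_{p+1}h_p$, $I\otimes h_{p-1}d_p$, $I\otimes k_p$ is the same as the paper's step $z=I\otimes k_p(z)+I\otimes h_{p-1}d_p(z)$. The only cosmetic difference is that you make the pairwise-orthogonal idempotent structure explicit and derive $R(I\otimes h_{p-1}d_p)=R(I\otimes h_{p-1})$ as a separate sub-step, while the paper folds that into the direct computation; you also leave the degenerate case $p=0$ (where $h_{-1}$ is absent, so $Ker(I\otimes h_0)=R(I\otimes k_0)$) implicit, which the paper handles separately.
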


\begin{proof}

\indent We only prove the first part of the proposition; the proof
of the second one is similar.\par
 
\indent It is easy to prove that $h_pd_{p+1}\colon X_{p+1}\to X_{p+1} $ 
is a projector. Thus, according to the properties of the tensor product we obtain 
the first assertion.\par

\indent With regard to $R(I\otimes h_pd_{p+1})$, since 
$I\otimes h_pd_{p+1}=I\otimes h_p\circ I\otimes d_{p+1}$,
it is clear that $R(I\otimes h_pd_{p+1})\subseteq R(I\otimes h_p)$.
\par

\indent On the other hand, since
$$
I\otimes h_pd_{p+1}\circ I\otimes h_p=I\otimes h_pd_{p+1}
h_p= I\otimes h_p(I_p-k_p-h_{p-1}d_p)=I\otimes h_p,
$$
we have $R(I\otimes h_p)\subseteq R(I\otimes h_pd_{p+1})$. 
Thus, the equality is proved.\par

\indent With respect to $Ker(I\otimes h_pd_{p+1})$, since 
$I\otimes h_pd_{p+1}=I\otimes h_p\circ I\otimes d_{p+1}$,
it is clear that $Ker(I\otimes d_{p+1})\subseteq Ker(I\otimes h_p
d_{p+1})$. However,
$$
I\otimes d_{p+1}\circ I\otimes h_pd_{p+1}= I\otimes d_{p+1}h_p
d_{p+1}= I\otimes (I_p-k_p-h_{p-1}d_p)d_{p+1}=
I\otimes d_{p+1}.
$$
Thus $Ker(I\otimes h_pd_{p+1})\subseteq Ker(I\otimes d_{p+1})$, 
and we have the equality.\par

\indent In order to prove the characterization of $Ker(I\otimes h_p)$, we
first suppose that $p=1,\ldots ,k$. We observe that 
$I\otimes h_p\circ I\otimes h_{p-1}=I\otimes h_p
h_{p-1}=0$, and that $I\otimes h_p\circ I\otimes k_p=I\otimes 
h_pk_p=0$. Thus, $R(I\otimes k_p)+R(I\otimes h_{p-1})\subseteq
Ker(I\otimes h_p)$. Moreover, $R(I\otimes k_p)\cap 
R(I\otimes h_{p-1})=0$. \par

\indent In fact, since $k_p$ is a projector, $I\otimes k_p$
is a projector. In particular, we may suppose that if $z\in
R(I\otimes k_p)$, then $z=I\otimes k_p(z)$.
In addition, if $z=I\otimes h_{p-1}(w)$, then we have 
$$
z=I\otimes k_p(z)=I\otimes k_p(I\otimes h_{p-1}(w))=I\otimes 
k_ph_{p-1}(w)=0.
$$
Then, $R(I\otimes k_p)\oplus R(I\otimes h_{p-1})\subseteq 
Ker (I\otimes h_p)$. \par

\indent On the other hand, if $z\in Ker (I\otimes h_p)$, then 
we have $z=I\otimes k_p (z)+I\otimes h_{p-1}d_p(z)$.
Thus, $z\in R(I\otimes h_{p-1})\oplus R(I\otimes k_p)$,
and we have the equality.\par

\indent Now, if $p=0$, it is clear that $R(I\otimes k_0)\subseteq 
Ker(I\otimes h_0)$. On the other hand, $I-I\otimes k_0=
I\otimes d_1\circ I\otimes h_0$. 
In particular, if $z\in Ker(I\otimes h_0)$, then $z\in R(I\otimes k_0)$.
Thus, $Ker(I\otimes h_0)=R(I\otimes k_0)$.\par
  
\end{proof}

\begin{rema}\rm  In the above conditions, if there is a Banach space $Y$ and  
a tensor product $Z\tilde{\otimes} Y$ relative to 
$\langle Z,Z{'}\rangle$ and $\langle Y, Y{'}\rangle$, then 
by similar arguments it is possible to obtain similar results  
to the ones of  Proposition 4.2, but in which  
the order of the spaces and maps in the tensor products are
interchanged.\end{rema}

\indent Now we review the relation between the tensor product of 
J. Eschmeier and complexes of Banach spaces; see 
[14, Section 3]. \par

\indent Let $(\langle X_i,\tilde{X}_i\rangle )_{0\le i\le n}$ be a system of
dual pairings of Banach spaces such that $\tilde{X}_i=X_i{'}$
for all $i=0,\ldots ,n$, or $X_i=\tilde{X}_i{'}$ for all $i=0,\ldots ,n$. 
Then, if ${\mathcal X}=\bigoplus_{p=0}^nX_p$ and if 
$\tilde{\mathcal X}=\bigoplus_{p=0}^n \tilde{X}_p$, according to the observations
in [14, Section 3], $\langle {\mathcal X },\tilde{\mathcal X}\rangle$ is a dual 
pairing. Moreover, if for all $i=1,\ldots ,n$ there is an operator 
$d_i^{'}\in{\mathcal L}(X_i,X_{i-1})$ such that $d^{'}_{i-1}
\circ d^{'}_i=0$, then 
$$
0\to X_n\xrightarrow{d^{'}_n}X_{n-1}\to\ldots\to X_1\xrightarrow{d^{'}_1}X_0
\to 0
$$
is a complex of Banach spaces and bounded linear operators; 
we denote it by $(X,d^{'})$. In
addition, if $\partial^{'}=\oplus_{p=1}^n d_p^{'}$, then
$({\mathcal X},\partial^{'}) $ is the differential space associated to the
complex $(X,d^{'})$ and $\partial^{'}\in {\mathcal L}({\mathcal X})$.\par  

\indent Now we consider another system of dual pairings $(\langle Y_j,
\tilde{Y}_j\rangle)_{0\le j\le m}$, with the property stated above, i.e., 
$\tilde{Y}_j=Y_j{'}$ for all $j=0,\ldots ,m$,
or $Y_j=\tilde{Y}_j{'}$ for 
all $j=0,\ldots ,m$. As above, we suppose that for all 
$j=1,\ldots ,m$ there is an operator 
$d_j^{''}\in{\mathcal L}(Y_j,Y_{j-1})$ such that $d^{''}_{j-1}
\circ d^{''}_j=0$. Thus, we have a
differential complex
$$
0\to Y_m\xrightarrow{d^{''}_m}Y_{m-1}\to\ldots\to Y_1\xrightarrow{d^{''}_1}
Y_0\to 0;
$$
we denote it by $(Y,d^{''})$. In
addition, if $\partial^{''}=\oplus_{q=1}^m d_q^{''}$, then
$({\mathcal Y},\partial^{''}) $ is the differential space associated to the
complex $(Y,d^{''})$ and $\partial^{''}\in {\mathcal L}({\mathcal Y})$.\par  

 \indent We suppose that for each $i=0,\ldots ,n$ and for each $j=0,\ldots , m$
there is a tensor product $X_i\tilde{\otimes} Y_j$ relative to
$\langle X_i,\tilde{X}_i\rangle$ and $\langle Y_j,\tilde{Y}_j\rangle$, in such a way that
all these tensor products are compatible in the sense described at the 
end of section 1 in [14]. In particular, it is possible to consider 
the tensor
product ${\mathcal X}\tilde{\otimes}{\mathcal Y}$ relative to 
$\langle\mathcal X,\tilde{\mathcal X}\rangle$ and
$\langle\mathcal Y,\tilde{\mathcal Y}\rangle$; see [14, Section 1]. 
Moreover, if $\eta\in{\mathcal L}({\mathcal X})$ is the map defined by
$\eta\mid X_p=(-1)^pI_p$, where $I_p$ denotes the identity
of $X_p$, then the map
$\partial \colon{\mathcal X}\tilde{\otimes}{\mathcal Y}\to 
{\mathcal X}\tilde{\otimes}{\mathcal Y}$ defined by 
$$
\partial=\partial^{'}\otimes I_q+\eta \otimes 
\partial^{''},
$$
is such that $\partial\circ\partial=0$ and that $\partial\in
{\mathcal L}({\mathcal X}\tilde{\otimes}{\mathcal Y})$, where $I_q$ denotes
the identity of $Y_q$.\par

However, if we consider the double complex
$$
\CD
X_{p-1}\tilde{\otimes}Y_q @<d^{'}_p\otimes I_q <<X_p\tilde{\otimes}
Y_q\\
@V(-1)^{p-1}I_{p-1}\otimes d^{"}_q VV          @VV(-1)^p I_p\otimes d^{"}_q V\\
X_{p-1}\tilde{\otimes}Y_{q-1}@<<^d{'}_p\otimes I_{q-1}<X_p\tilde{\otimes} 
Y_{q-1},\\
\endCD
$$ 
then the differential space associated to the total complex of this 
double complex is $(\mathcal X\tilde{\otimes}\mathcal Y, \partial)$.\par

\indent Now, if $L_1$ and $L_2$ are 
two complex solvable
finite dimensional Lie algebras of dimensions $n$ and $m$
respectively, 
$X_1$ and $X_2$ two complex Banach 
spaces, and $\rho_i \colon L_i\to {\rm L}(X_i)$, $i=1$, $2$,
two representations of the Lie 
algebras, then we may consider the Koszul complexes
associated to the representations $\rho_1$ 
and $\rho_2$, i.e., $(X_1\otimes\wedge 
L_1, d(\rho_1))$
and $(X_2\otimes\wedge L_2, d(\rho_2))$ respectively.
\par

\indent It is clear that for $p=0,\ldots , n$ and for $q=0,\ldots , m $
$\langle X_1\otimes\wedge^p L_1,X_1\otimes \wedge^p L_1{'}\rangle$ 
and $\langle X_2\otimes\wedge^q L_2,X_2\otimes \wedge^q L_2{'} \rangle$
are dual pairings. Moreover, $d_p(\rho_1)\in{\mathcal L }(X_1\otimes\wedge^p L_1,
X_1\otimes \wedge^{p-1} L_1)$ and $d_q(\rho_2)\in{\mathcal L }(X_2
\otimes\wedge^q L_2,X_2\otimes \wedge^{q-1} L_2)$, for $p=0,\ldots ,n$ and
$q=0,\ldots ,m$. Thus, we may consider the differential spaces
$({\mathcal X}_1,\partial_1)$ and $({\mathcal X}_2,\partial_2)$,
where ${\mathcal X}_1=X_1\otimes\wedge L_1$,
${\mathcal X}_2=X_2\otimes\wedge L_2$, $\partial_1=\oplus^n_{p=1}
d_p(\rho_1)$ and $\partial_2=\oplus^m_{q=1}
d_q(\rho_2)$.\par

\indent We suppose that there is a tensor product of 
$X_1$ and $X_2$ with respect to $\langle X_1,X_1{'}\rangle$ and 
$\langle X_2,X_2{'}\rangle$, $X_1\tilde{\otimes} X_2$. Then, 
according to the
considerations at the end of section 1 in [14], for all $p=0,\ldots ,n$
and $q=0,\ldots , m$ there is a well-defined tensor product of
$X_1\otimes \wedge^p L_1$ and $X_2\otimes \wedge^q L_2$,   
$X_1\otimes\wedge^p L_1\tilde{\otimes}
X_2\otimes\wedge^q L_2$, relative to  
$\langle X_1\otimes\wedge^p L_1,X_1\otimes\wedge^p L_1{'}\rangle$  
and $\langle X_2\otimes\wedge^q L_2,X_2\otimes\wedge^q L_2{'}\rangle$. 
Furthermore, since
for all $p$ and $q$ such that $p=0,
\ldots, n$ and $q=0,\ldots, m$, these tensor products
are compatible in the sense described at the end of section 1 in [14];
as above, we may consider the tensor product of ${\mathcal X}_1$
and ${\mathcal X}_2$, ${\mathcal X}_1\tilde{\otimes}{\mathcal X}_2$,
which is a differential space with differential $\partial\in{\mathcal L}
({\mathcal X}_1\tilde{\otimes}{\mathcal X}_2)$, $\partial=\partial_1
\otimes I+\eta\otimes\partial_2$. However, 
$({\mathcal X}_1\tilde{\otimes}{\mathcal X}_2,\partial)$ is the 
differential space associated to the total complex of the 
double complex

$$
\CD   
 X_1\otimes\wedge^{p-1 }L_1\tilde{\otimes} X_2\otimes
\wedge^q L_2@<d_p(\rho_1)\otimes I_q<<X_1\otimes\wedge^p L_1
\tilde{\otimes} X_2\otimes\wedge^q L_2 \\
@V(-1)^{p-1}I_{p-1}\otimes d_q(\rho_2)VV@V(-1)^pI_p\otimes 
d_q(\rho_2)VV\\
X_1\otimes\wedge^{p-1 }L_1\tilde{\otimes} X_2\otimes
\wedge^{q-1} L_2@<d_p(\rho_1)\otimes I_{q-1}<<X_1\otimes\wedge^pL_1
\tilde{\otimes} X_2\otimes\wedge^{q-1} L_2. \\
\endCD
$$

\indent We recall that given the Koszul complexes $(X_1\otimes
\wedge L_1,d(\rho_1))$ and $(X_2\otimes\wedge L_2,d(\rho_2))$,
according to the properties of the tensor product introduced in [14] and 
the considerations of sections 1 and 3 in [14], it is possible to 
consider the complex of Banach spaces defined by the
tensor product of $(X_1\otimes
\wedge L_1,d(\rho_1))$ and $(X_2\otimes\wedge L_2,
d(\rho_2))$, denoted by $(X_1\otimes
\wedge L_1,d(\rho_1))\tilde{\otimes}(X_2\otimes\wedge L_2,
d(\rho_2))$. This complex is the total complex of the above
double complex, i.e., for $k$ such that $0\le k\le n+m$,
the $k$ space is $\bigoplus_{p+q=k}X_1\otimes\wedge^p L_1\tilde{\otimes}X_2
\otimes\wedge^q L_2,$
and the boundary map, $d_k$, restricted to $X_1\otimes\wedge^p L_1\tilde{\otimes}X_2
\otimes\wedge^q L_2$ is $d_k=d_p(\rho_1)\otimes I_q+
(-1)^p\otimes d_q(\rho_2)$. In particular, $({\mathcal X}_1\tilde
{\otimes}{\mathcal X}_2,\partial)$ is the differential space of 
the complex $(X_1\otimes
\wedge L_1,d(\rho_1))\tilde{\otimes}(X_2\otimes\wedge L_2,
d(\rho_2))$. \par

\indent On the other hand, we may consider the direct sum of 
the Lie algebras $L_1$ and $L_2$, $L=L_1\times L_2$, which is 
a 
complex solvable finite dimensional Lie algebra, and the tensor product 
representation of $L$ in $X_1\tilde{\otimes} X_2$, i.e.,

$$
\rho=\rho_1\times\rho_2\colon L\to {\rm L}(X_1\tilde{\otimes} X_2),
\hbox{ }\rho_1\times\rho_2(l_1,l_2)=\rho_1(l_1)\otimes I+I
\otimes \rho_2(l_2),    
$$
where $I$ denotes the identity operator of both $X_2$ and $X_1$. In 
particular, we may consider the Koszul complex of the representation 
$\rho\colon L\to {\rm L}(X_1\tilde{\otimes}X_2)$, i.e., $(X_1
\tilde{\otimes} X_2\otimes\wedge L, d(\rho))$, and the differential 
space associated to it, i.e., $(X_1\tilde{\otimes}X_2\otimes
\wedge L, \tilde{\partial})$, where $\tilde{\partial}=\oplus_{k=
1}^{n+m}d_k(\rho)$.
\par

\indent In the following proposition we relate  
the complexes $(X_1\otimes
\wedge L_1,d(\rho_1))\tilde{\otimes}(X_2\otimes\wedge L_2,
d(\rho_2))$ and $(X_1
\tilde{\otimes} X_2\otimes\wedge L, d(\rho))$.\par  
 
\begin{pro} Let $X_1$ and $X_2$ be two complex 
Banach spaces, $L_1$ and $L_2$ two complex solvable finite
dimensional Lie algebras, and $\rho_i\colon L_i\to 
{\rm L}(X_i)$, $i=1$, $2$, two representations of the algebras. 
Then, the 
complexes $(X_1\tilde{\otimes}X_2\otimes\wedge L,d
(\rho))$ 
and $(X_1\otimes
\wedge L_1,d(\rho_1))\tilde{\otimes}(X_2\otimes\wedge L_2,
d(\rho_2))$ are isomorphic. In particular, the differential
spaces $({\mathcal X}_1\tilde{\otimes}{\mathcal X}_2,
\partial)$ and $(X_1\tilde{\otimes}X_2\otimes
\wedge L, \tilde{\partial})$ are isomorphic.
 \end{pro}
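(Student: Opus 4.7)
The plan is to construct an explicit isomorphism of graded Banach spaces between the two complexes and then to verify that it intertwines the boundary maps; the statement about the differential spaces will follow immediately.

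First I would use the canonical decomposition of the exterior algebra of a direct sum,
$\wedge^k L = \bigoplus_{p+q=k} \wedge^p L_1 \otimes \wedge^q L_2$,
to rewrite the $k$-th chain of the Koszul complex of $\rho$ as $\bigoplus_{p+q=k} X_1\tilde{\otimes} X_2\otimes\wedge^p L_1\otimes\wedge^q L_2$, while by definition the $k$-th space of the tensor product complex is $\bigoplus_{p+q=k} X_1\otimes\wedge^p L_1\tilde{\otimes} X_2\otimes\wedge^q L_2$. Since $\wedge^p L_1$ and $\wedge^q L_2$ are finite dimensional, the compatibility of $\tilde{\otimes}$ with finite direct sums (recalled at the end of Section 4 from the end of Section 1 in [14]) supplies, for each pair $(p,q)$, a natural isomorphism of Banach spaces $X_1\otimes\wedge^p L_1\tilde{\otimes} X_2\otimes\wedge^q L_2 \cong X_1\tilde{\otimes} X_2\otimes\wedge^p L_1\otimes\wedge^q L_2$. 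Summing over $p+q=k$ produces a bounded isomorphism $\Theta_k$ in each degree.

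Next I would verify that the family $\Theta=(\Theta_k)_{0\le k\le n+m}$ intertwines the differentials. The crucial point is that in $L=L_1\times L_2$ the brackets $[L_1,L_2]$ vanish, so for $l_i\in L_1$ ($1\le i\le p$) and $l'_j\in L_2$ ($1\le j\le q$) the Koszul formula applied to $x\otimes(l_1\wedge\dots\wedge l_p\wedge l'_1\wedge\dots\wedge l'_q)$ splits into two disjoint groups of terms: those involving $\rho(l_i)=\rho_1(l_i)\otimes I$ together with brackets among the $l_i$'s, and those involving $\rho(l'_j)=I\otimes\rho_2(l'_j)$ together with brackets among the $l'_j$'s. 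Under $\Theta$ the first group is precisely $d_p(\rho_1)\otimes I_q$. For the second group, the element $l'_j$ occupies position $p+j$ in the wedge, giving the Koszul sign $(-1)^{p+j+1}$; factoring out the $(-1)^{j+1}$ that is already built into $d_q(\rho_2)$ leaves exactly the factor $(-1)^p$, which matches the $(-1)^p I_p\otimes d_q(\rho_2)$ term in the total differential of the double complex displayed in Section~4.

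Once $\Theta$ is known to be an isomorphism of chain complexes, the assertion about the differential spaces is automatic, because by construction both $\tilde{\partial}$ and $\partial$ are simply the direct sums of the boundary maps in their respective complexes, so $\Theta$ descends to an isomorphism of $({\mathcal X}_1\tilde{\otimes}{\mathcal X}_2,\partial)$ and $(X_1\tilde{\otimes} X_2\otimes\wedge L,\tilde{\partial})$. The main obstacle I anticipate is the careful bookkeeping of Koszul signs when elements of $L_2$ are moved past elements of $L_1$; the rest of the argument rests only on the axioms (T1)--(T4) of Definition 4.1, on the compatibility of the tensor products, and on the vanishing $[L_1,L_2]=0$.
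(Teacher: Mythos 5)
Your proposal is correct and follows essentially the same route as the paper: the paper also builds the isomorphism $\tilde{\Phi}_k\colon\bigoplus_{p+q=k}X_1\otimes\wedge^pL_1\tilde{\otimes}X_2\otimes\wedge^qL_2\to X_1\tilde{\otimes}X_2\otimes\wedge^kL$ from the identification $\wedge L_1\otimes\wedge L_2\cong\wedge L$ and invokes the direct-sum structure of $L$ to conclude it is a chain map. The only difference is that you actually carry out the Koszul-sign bookkeeping that the paper leaves as an ``easy calculation,'' which is a welcome amplification rather than a divergence.
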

\begin{proof}

\indent First of all we consider the identification
$$
\Phi\colon \wedge L_1\otimes \wedge L_2\to\wedge L, \hbox{  }
\Phi (w_1\otimes w_2)=w_1\wedge w_2,
$$
for $w_1\in L_1$, $w_2\in L_2$. Now an easy calculation shows 
that for $k=0,\ldots ,n+m$ the map
$$
\tilde{\Phi}_k \colon \bigoplus_{p+q=k} X_1\otimes\wedge^p L_1
\tilde{\otimes}X_2\otimes\wedge^q L_2\to X_1\tilde{\otimes}
X_2\otimes\wedge^k L,
$$
$$
\tilde{\Phi}(x_1\otimes w_1\tilde{\otimes} x_2\otimes w_2)=x_1
\tilde{\otimes}x_2\otimes w_1\wedge w_2,
$$
is a well-defined isomorphism. Moreover, since $L$ is the direct sum of
$L_1$ and $L_2$, it is easy to prove that 
$\tilde{\Phi}=(\tilde{\Phi}_k)_{0\le k\le n+m}$ is a 
chain map, i.e., $\tilde{\Phi}(d)=d(\rho)\tilde{\Phi}$.\par
\end{proof}

\section{Joint spectra of the tensor product representation}

\indent In this section we consider two representation of Lie algebras in 
two Banach spaces and a tensor product of the Banach spaces in the sense of 
[14], and we describe
the S\l odkowski and the split joint spectra of the
tensor product representation of the direct sum of the algebras; see section 4. 
Moreover, for Hilbert 
spaces, the joint spectra are characterized in a precise manner. In addition, we apply our results to 
nilpotent systems of operators. 
We start by recalling the objects we shall 
work with.\par    

\indent Let $L_1$ and $L_2$ be  
two complex solvable
finite dimensional Lie algebras, $X_1$ and $X_2$ two complex Banach 
spaces, and $\rho_i \colon L_i\to {\rm L}(X_i)$, $i=1$, $2$,
two representations of Lie 
algebras. We suppose that there is a tensor product of $X_1$ 
and $X_2$ relative to $\langle X_1,X_1{'}\rangle$ and 
$\langle X_2,X_2{'}\rangle$, 
$X_1\tilde{\otimes} X_2$. Thus, as in section 4, we may consider
the direct sum of 
the Lie algebras $L_1$ and $L_2$, $L=L_1\times L_2$, which is a 
complex solvable finite dimensional Lie algebra, and the tensor
product representation of $L$ in $X_1\tilde{\otimes} X_2$, i.e.
$$
\rho=\rho_1\times\rho_2\colon L\to {\rm L}(X_1\tilde{\otimes} X_2),
\hbox{ }\rho_1\times\rho_2(l_1,l_2)=\rho_1(l_1)\otimes I+I
\otimes \rho_2(l_2),    
$$
where $I$ denotes the identity of $X_2$ and $X_1$ respectively. In 
particular, we may consider the Koszul complex of the representation 
$\rho\colon L\to {\rm L}(X_1\tilde{\otimes}X_2)$, $(X_1
\tilde{\otimes} X_2\otimes\wedge L, d(\rho))$.\par

\indent Now we state the most important result of this section. 
However, we first observe that the sets of characters of $L$ may be naturally
identified with the cartesian product of the sets 
of characters of $L_1$ and $L_2$. Indeed, it is clear that $L^*
\cong L_1^*\times L_2^*$. Moreover, since  
as Lie algebra $L$ is the direct sum of $L_1$ and $L_2$, 
if $[.,.]$ 
denotes the Lie bracket 
of $L$, then the restriction of $[.,.]$ to $L_1$ or $L_2$ 
coincides with the bracket of $L_1$ or $L_2$ respectively, and for $l_1
\in L_1$ and $l_2\in L_2$, $[l_1,l_2]=0$. Then, the map 
$$
H\colon L^*\to L_1^*\times L_2^*,\hbox{ }f\mapsto (f\circ\iota_1,f
\circ\iota_2)
$$
defines an identification of the characters of $L$ and the 
cartesian product of the characters of $L_1$ and $L_2$,
where $\iota_j\colon L_j\to L$ denotes 
the inclusion map, $j=1,$ $2$.
In the following theorem we use this identification.\par
 
\begin{thm}  Let $L_1$ and $L_2$ be two complex solvable 
finite dimensional Lie algebras, $X_1$ and $X_2$ two complex Banach 
spaces, and $\rho_i \colon L_i\to {\rm L}(X_i)$, $i=1$, $2$, two 
representations of Lie algebras. We suppose 
that there is a tensor product of $X_1$ and $X_2$ relative to 
$ \langle X_1,X_1{'}\rangle$ and $\langle X_2,X_2{'}\rangle$, 
$X_1\tilde{\otimes} 
X_2$. Then, if we consider the tensor product representation of $L=
L_1\times L_2$, $\rho=\rho_1\times
\rho_2\colon L\to {\rm L}(X_1\tilde{\otimes} X_2)$,
we have 
$$
{\rm (i)} \hbox{}\bigcup_{p+q=k}\sigma_{\delta ,p}(\rho_1)\times 
\sigma_{\delta ,q}(\rho_2)\subseteq\sigma_{\delta ,k}(\rho)
\subseteq sp_{\delta ,k } (\rho)\subseteq \bigcup_{p+q=k}sp_{\delta ,p}
(\rho_1)\times sp_{\delta ,q}(\rho_2),
$$
$$
 {\rm (ii)}\hbox{}\bigcup_{p+q=k}\sigma_{\pi ,p}(\rho_1)\times 
\sigma_{\pi ,q}(\rho_2)\subseteq\sigma_{\pi ,k}(\rho)
\subseteq sp_{\pi ,k} (\rho)\subseteq \bigcup_{p+q=k}sp_{\pi ,p}
(\rho_1)\times sp_{\pi ,q}(\rho_2).
$$
In particular, if $X_1$ and $X_2$ are Hilbert spaces, the above 
inclusions are equalities.\par
\end{thm}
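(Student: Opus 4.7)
The plan is to reduce everything to the differential space identification provided by Proposition 4.4, which identifies $(X_1\tilde{\otimes}X_2\otimes\wedge L,d(\rho-(f_1,f_2)))$ with the total complex of the double complex tensor product of the Koszul complexes of $\rho_1-f_1$ and $\rho_2-f_2$. Under this identification all spectral conditions translate into statements about homology, range closure, and splitting of this total complex. The middle inclusion $\sigma_{\delta,k}(\rho)\subseteq sp_{\delta,k}(\rho)$ is immediate since a complex that is split in degree $p$ is in particular exact and has closed boundaries in degree $p$.

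For the rightmost inclusion of (i), I would argue by contrapositive: if for some $p+q=k$ one has $f_1\notin sp_{\delta,p}(\rho_1)$ and $f_2\notin sp_{\delta,q}(\rho_2)$, then the Koszul complexes of $\rho_1-f_1$ and $\rho_2-f_2$ admit splitting homotopies $h_0,\dots,h_p$ and $h'_0,\dots,h'_q$ in the indicated ranges. Applying Proposition 4.2 iteratively (with $Y$ running through the relevant factors of the dual double complex, and using Remark following 4.2 when tensoring on the opposite side) produces the induced homotopies $I\otimes h_i$ and $h'_j\otimes I$, and these combine additively on the total complex to yield splittings $H_j=I\otimes h_i+(-1)^i h'_j\otimes I$ in all total degrees $0,\dots,k$. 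Thus $(f_1,f_2)\notin sp_{\delta,k}(\rho)$.

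For the leftmost inclusion of (i), if $f_1\in\sigma_{\delta,p}(\rho_1)$ and $f_2\in\sigma_{\delta,q}(\rho_2)$ with $p+q=k$, choose $p'\le p$, $q'\le q$ with $H_{p'}(X_1\otimes\wedge L_1,d(\rho_1-f_1))\ne 0$ and $H_{q'}(X_2\otimes\wedge L_2,d(\rho_2-f_2))\ne 0$. A Künneth-type argument on the double complex of Proposition 4.4, carried out using the properties (T1)--(T4) of the tensor product of [14] (in particular the fact that operators of the form $f_{x,u}\otimes I$ and $I\otimes f_{y,v}$ behave as expected on rank-one tensors, and that $x\otimes y=0$ forces $x=0$ or $y=0$ by (T1)), produces a nonzero class in $H_{p'+q'}$ of the total complex, giving $(f_1,f_2)\in\sigma_{p'+q'}(\rho)\subseteq\sigma_{\delta,k}(\rho)$. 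Part (ii) then follows from (i) by duality, applying the adjoint-representation argument used for $\sigma_{\pi,k,e}$ in Section 3: Theorem 3.4 and its split analogue (available in [21, Kapitel 3]) transport the $\delta$-inclusions for $\rho^*=\rho_1^*\times\rho_2^*$ back to $\pi$-inclusions for $\rho$, with the correct character shift $h=h_1+h_2$ restricting correctly on each factor.

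The Hilbert-space case is handled last: when $X_1$ and $X_2$ are Hilbert and the tensor product is the Hilbertian one, every closed subspace is complemented, so at every degree $p$ the Koszul complex of $\rho_i-f_i$ is split exact iff it is exact with closed boundary, whence $\sigma_{\delta,k}(\rho_i)=sp_{\delta,k}(\rho_i)$ and similarly $\sigma_{\delta,k}(\rho)=sp_{\delta,k}(\rho)$, so the four-term chain of inclusions collapses into equalities (analogously for the $\pi$-spectra). The main obstacle I anticipate is the leftmost inclusion: since the tensor product of [14] is axiomatic and lacks, in general, the classical Künneth short exact sequence, one must extract a surviving homology class directly from the structure of the double complex and the defining properties of $\tilde{\otimes}$, rather than invoking a ready-made spectral sequence; this is where the bulk of the technical work will lie.
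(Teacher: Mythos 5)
Your outline gets the easy parts right (Proposition 4.4 identification, the trivial middle inclusion, the Hilbert-space collapse via $\sigma = sp$), but the two substantive inclusions both have genuine gaps.

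\textbf{The rightmost inclusion is where your contrapositive breaks down.} You assume that $(f_1,f_2)\notin\bigcup_{p+q=k}sp_{\delta,p}(\rho_1)\times sp_{\delta,q}(\rho_2)$ yields a single pair $(p,q)$ with $p+q=k$, $f_1\notin sp_{\delta,p}(\rho_1)$ and $f_2\notin sp_{\delta,q}(\rho_2)$, so that the two splitting homotopies together cover all bidegrees up to total degree $k$. That is not the correct negation. Writing $p^*$ and $q^*$ for the least degrees at which $f_1$ and $f_2$ enter $sp_{\delta,\cdot}$, non-membership in the union only gives $p^*+q^*>k$; when $p^*+q^*=k+1$ (e.g.\ $k=1$, $f_1\in sp_{\delta,1}(\rho_1)\setminus sp_{\delta,0}(\rho_1)$, $f_2\in sp_{\delta,1}(\rho_2)\setminus sp_{\delta,0}(\rho_2)$) there is no $p+q=k$ with both factors split, and the best one gets is splittings of total depth $k-1$. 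In that case the formula $H_j=I\otimes h_i+(-1)^ih'_j\otimes I$ does not even have all its terms defined on the pieces $X_1\otimes\wedge^pL_1\tilde{\otimes}X_2\otimes\wedge^qL_2$ that sit at the "corner'' $p=p^*$, $q=k-p^*$. This staircase case is exactly where the paper's proof does its real work: it finds $p_1$ with $\alpha\notin sp_{\delta,p_1-1}(\rho_1)$, $\alpha\in sp_{\delta,p_1}(\rho_1)$, $\beta\notin sp_{\delta,k-p_1}(\rho_2)$, and constructs the homotopy \emph{piecewise} according to a five-way case split in $(p,q)$, using the decompositions of Proposition 4.2 and Remark 4.3 to make the pieces glue into a chain homotopy across the boundary bidegrees. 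Your additive formula would fail the homotopy identity precisely on those boundary pieces.

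\textbf{The leftmost inclusion is not actually proved in your sketch}; you correctly flag the absence of a ready-made K\"unneth sequence for the axiomatic tensor product as "the bulk of the technical work,'' but that work is in fact already done in the reference: the paper cites [14, Theorem 2.2], whose explicit map $\varphi$ produces the nonzero class in degree $p_1+q_2$. So the obstacle you anticipate is real but has a known resolution which you should invoke rather than reconstruct. \textbf{Finally, your route to (ii) via duality is suspect}: the duality $\sigma_{\pi,k}(\rho)+h=\sigma_{\delta,k}(\rho^*)$ lives on $X'$, and for a general tensor product in the sense of [14] there is no reason for $(X_1\tilde{\otimes}X_2)'$ to be a [14]-tensor product of $X_1'$ and $X_2'$, nor for $\rho^*$ to factor as a tensor product representation on it. The paper avoids this entirely by running the same direct homotopy construction for $\pi$, using the second half of Proposition 4.2 (the one adapted to the $\pi$-side truncations).
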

\begin{proof}
\indent We begin with the first statement. \par

\indent We consider $\alpha\in\sigma_{\delta ,p}(\rho_1)$, 
$\beta\in \sigma_{\delta , q}(\rho_2)$, $p+q=k$, and the Koszul complexes 
associated to the representations $\rho_1-\alpha\colon L_1\to{\rm L}
(X_1)$ and $\rho_2-\beta\colon L_2\to{\rm L}(X_2)$,
$(X_1\otimes\wedge L_1,d(\rho_1-\alpha))$ and
$(X_2\otimes\wedge L_2,d(\rho_2-\beta))$ respectively. Then, there is 
$p_1$, $0\le p_1\le p$, and $q_2$, $0\le q_2\le q$, such that 
$H_{p_1}(X_1\otimes\wedge L_1,d(\rho_1-\alpha))\neq 0$ and that 
$H_{q_2}(X_2\otimes\wedge L_2,d(\rho_2-\beta))\neq 0$.\par

\indent In addition, if we consider 
the differential spaces associated to the Koszul complexes
of $\rho_1-\alpha$ and $\rho_2-\beta$, $({\mathcal X}_1,\partial_1)$ 
and $({\mathcal X}_2,\partial_2)$ respectively, then by
[14, Theorem 2.2] we have $H_*({\mathcal X}_1\tilde{\otimes}{\mathcal X}_2)
\neq 0$. Moreover, since $({\mathcal X}_1\tilde{\otimes}{\mathcal X}_2,
\partial)$ is the differential space of $(X_1\otimes\wedge L_1,
d(\rho_1-\alpha))\tilde{\otimes} 
(X_2\otimes\wedge L_2,d(\rho_2-\beta))$, according to the structure
of the map $\varphi$ in [14, Theorem 2.2], we have $H_{p_1+q_2}
((X_1\otimes\wedge L_1,d(\rho_1-\alpha))\tilde{\otimes} 
(X_2\otimes\wedge L_2,d(\rho_2-\beta)))\neq 0$.
However, according to Proposition 4.4, since $(\rho_1-\alpha)\times
(\rho_2-\beta)=\rho-(\alpha ,\beta)$, we have $H_{p_1+q_2}(X_1\tilde{\otimes}X_2\otimes
\wedge L,d(\rho-(\alpha ,\beta )))\neq 0$. In particular, since $0\le p_1+q_2
\le p+q=k$, $(\alpha ,\beta)\in
\sigma_{\delta ,k}(\rho)$.\par

\indent The middle inclusion is clear.\par

\indent With regard to the inclusion on the right, we prove that
if $(\alpha ,\beta)$ does not belong to $\bigcup_{p+q=k}sp_{\delta ,p}
(\rho_1)\times sp_{\delta ,q}(\rho_2)$, then $(\alpha ,\beta)$ does 
not belong to $sp_{\delta ,k} (\rho)$. To this end, we shall construct a 
homotopy  operator. There are several cases to be considered. \par

\indent We first suppose that $\alpha\notin sp_{\delta ,k}
(\rho_1)$. Thus, the complex $(X_1\otimes\wedge L_1 ,d(\rho_1-
\alpha))$ is split for $p=0,\ldots ,k$, i.e., for $p=0,\ldots ,k$ there are
bounded linear operators $h_p\colon X_1\otimes\wedge^p L_1
\to X_1\otimes \wedge^{p+1}L_1$, such that
$h_{p-1}d_p(\rho_1-\alpha) +d_{p+1}(\rho_1-\alpha)h_p=
I_p$, where $I_p$ denotes the identity of $X_1\otimes\wedge^p L_1$. 
Then, if $p$ and $q$ are such that
$0\le p+q\le k$, we define
$$
H_{p,q}\colon X_1\otimes \wedge^p L_1\tilde{\otimes} X_2
\otimes
\wedge^qL_2 \to X_1\otimes \wedge^{p+1}L_1\tilde{\otimes} 
X_2\otimes
\wedge^qL_2,
\hbox{ }H_{p,q}=h_p\otimes I_q,
$$  
where $I_q$ denotes the identity map of $X_2\otimes\wedge^q L_2$. 
We observe that since ${\mathcal L} (X_1\otimes\wedge^p L_1,X_1
\otimes
\wedge^{p+1}L_1)={\rm L} (X_1\otimes\wedge^pL_1,X_1\otimes\wedge
^{p+1}L_1)$, $H_{p,q}$ is a well-defined map. Moreover, a direct 
calculation shows that the maps $H_r$, $r=0,\ldots , k$, $H_r=
\bigoplus_{p+q=r} H_{p,q}$, define a homotopy operator for the  
complex  $(X_1\otimes\wedge L_1)\tilde{\otimes}
(X_2\otimes\wedge L_2)$, for $r=0,\ldots ,k$. Thus, according to 
Proposition 4.4 the complex $(X_1\tilde{\otimes}X_2\otimes\wedge L,
d(\rho-(\alpha,\beta)))$ is split for $r=0,\ldots ,k$, i.e., $(\alpha,\beta)$
does not belongs to  
$sp_{\delta ,k}(\rho)$. \par

\indent By a similar argument, it is possible to prove that if $\beta\notin 
sp_{\delta ,k}(\rho_2)$, then $(\alpha ,\beta)$ does not belongs 
to $sp_{\delta ,k}(\rho)$.
Thus, we may suppose that $\alpha\in sp_{\delta ,k}(\rho_1)$
and $\beta\in sp_{\delta ,k}(\rho_2)$.\par

\indent Now, since $(\alpha ,\beta)$ does not belong to $\bigcup_{p+q=k}
sp_{\delta ,p}(\rho_1)\times sp_{\delta ,q}(\rho_2)$ and $\alpha\in 
sp_{\delta ,k}(\rho_1)$, we have
$\beta\notin sp_{\delta ,0}(\rho_2)$. Similarly, since $\beta\in 
sp_{\delta ,k}(\rho_2)$ we have  
$\alpha\notin sp_{\delta ,0}(\rho_1)$. Thus, there is $p_1$,  
$1\le p_1\le k$, such that $\alpha \notin sp_{\delta ,p_1-1}(\rho_1)$, 
$\alpha\in sp_{\delta ,p_1}$, and
$\beta\notin sp_{\delta ,k-p_1}(\rho_2)$. \par

\indent In order to construct a homotopy operator for the 
Koszul complex associated to $\rho-(\alpha ,\beta)$, 
$(\alpha ,\beta)$ as in the last paragraph, it is necessary to 
consider several cases. In fact, we shall define the operator according to 
the relation of $p$ and $q$ with $p_1$ and $k-p_1$ respectively, and 
for each particular case, we shall prove that it is a homotopy. At the end of the 
proof, it is clear that this map is a well-defined homotopy for 
the Koszul complex of $\rho$ at $r=0,\ldots ,k$. \par   

\indent Moreover, according to Proposition 4.4, it is enough to prove that the
complex $(X_1\otimes\wedge L_1, 
d(\rho_1-\alpha))\tilde{\otimes}(X_2\otimes\wedge L_2, 
d(\rho_2-\beta))$ is split in 
dimension 
$r=0,\ldots ,k$. Now, the $r$-space of this
complex is $\bigoplus_{p+q=k}X_1\otimes \wedge^p L_1\tilde
{\otimes} X_2\otimes \wedge^q L_2$. We  
construct the operator $H_{p,q}$ satisfying the homotopy 
identity for $p$ and  
$q$ such that $p+q=r$, and then we verify that
$(H_r)_{0\le r\le k}$ is a homotopy operator for the complex, 
where $H_r=\bigoplus_{p+q=r} H_{p,q}$. The construction of the maps 
$H_{p,q}$ is divided into five cases. \par

\indent We first suppose that $0\le p\le p_1-1$ and $0\le q\le k-p_1$. 
Then, we have well-defined maps
$$
X_1\otimes\wedge^{p-1} L_1\xrightarrow{h_{p-1}}X_1\otimes
\wedge^p L_1\xrightarrow{h_p}X_1\otimes\wedge^{p+1}L_1,
$$
such that $d_{p+1}(\rho_1-\alpha)h_p+h_{p-1}d_p(\rho_1-\alpha)=
I_p$, where $I_p$ denotes the identity of $X_1\otimes 
\wedge^p L_1$, and
 $$
X_2\otimes\wedge^{q-1} L_2\xrightarrow{g_{q-1}}X_2\otimes\wedge^q 
L_2\xrightarrow{g_q}X_2\otimes\wedge^{q+1}L_2,
$$
such that $d_{q+1}(\rho_2-\beta)g_q+g_{q-1}d_q(\rho_2-\beta)=
I_q$, where $I_q$ denotes the identity of $X_2\otimes \wedge^q L_2$.\par
Thus, we may define the map 
$$
H_{p,q}\colon X_1\otimes\wedge^pL_1\tilde{\otimes}X_2\otimes
\wedge^q L_2\to
X_1\otimes\wedge^{p+1}L_1\tilde{\otimes}X_2\otimes\wedge^q L_2
\oplus X_1\otimes\wedge^p L_1\tilde{\otimes}X_2\otimes
\wedge^{q+1}L_2,
$$
$$
H_{p,q}=1/2(h_p\otimes I_q\oplus (-1)^p I_p\otimes g_q).
$$
We observe that according to the properties of the tensor product,  
$H_{p,q}$ is a well-defined map. \par

\indent In addition, since $p-1<p\le p_1-1$ and $q-1<q\le k-p_1$, we 
may define the maps $H_{p-1,q}$ and $H_{p,q-1}$. However, a 
direct calculation shows that in $X_1\otimes\wedge^pL_1\tilde
{\otimes}X_2\otimes\wedge^q L_2$, we have
$$
d_{r+1}H_{p,q}+(H_{p-1,q}\oplus H_{p,q-1})d_r=I,
$$
where $d$ and $I$ denote the boundary and the identity of the complex $(X_1\otimes\wedge L_1, 
d(\rho_1-\alpha))\tilde{\otimes}(X_2\otimes\wedge L_2, 
d(\rho_2-\beta))$ respectively.\par

\indent In the second case we suppose that $p$ and $q$ are
such that $p\le p_1-1$ and 
$q=k-p_1+1$.
Then, we know that for $q=0,\ldots ,k-p_1$
there are bounded maps 
$$
X_2\otimes\wedge^{q-1} L_2\xrightarrow{g_{q-1}}X_2\otimes
\wedge^q L_2\xrightarrow{g_q}X_2\otimes\wedge^{q+1}L_2,
$$
such that $d_{q+1}(\rho_2-\beta)g_q+g_{q-1}d_q(\rho_2-\beta)
=I_q$. \par

\indent In addition, we may suppose that the maps $g_q$
satisfy the preliminary 
facts recalled before Proposition 4.2, for $q= 0,\ldots , k-p_1$. Moreover, 
according to Proposition 4.2, 
we have $X_1\otimes\wedge^p L_1
\tilde{\otimes }X_2\otimes\wedge^q L_2= Ker(I_p\otimes 
d_{q}(\rho_2-\beta))
\oplus R(I_p\otimes g_{q-1})$, for
$p=0,\ldots , n$ and $q=0,\ldots , k-p_1+1$.\par

\indent On the other hand, it is easy to prove that \par
(i) $d_p(\rho_1-\alpha )\otimes I_q(R(I_p\otimes g_{q-1}))\subseteq
R(I_{p-1}\otimes g_{q-1})$ and $d_p(\rho_1-\alpha)\otimes I_q(Ker(I_p
\otimes d_q(\rho_2-\beta )))\subseteq
Ker(I_{p-1}\otimes d_q(\rho_2-\beta ))$,\par
(ii) $I_p\otimes d_q(\rho_2-\beta)(Ker (I_p\otimes d_q(\rho_2-\beta) ))=0$ and 
$I_p\otimes d_q(\rho_2-\beta)(R (I_p\otimes g_{q-1}))=R(I_p\otimes 
d_q(\rho_2-\beta))=Ker(I_p\otimes d_{q-1}(\rho_2-\beta))$.\par

\indent Furthermore, as in the first case, 
we have well-defined maps, 
$(h_p)_{0\le p\le p_1-1}$, such that $h_p\colon X_1\otimes
\wedge^p 
L_1\to X_1\otimes\wedge^{p+1} L_1$,
and that $d_{p+1}(\rho_1-\alpha)h_p+h_pd_{p-1}(\rho_1-\alpha)
=I_p$, for $p=0,\ldots ,p_1-1 $. A straightforward 
calculation shows that\par
(iii) $h_p\otimes I_q (R(I_p\otimes g_{q-1}))\subseteq R(I_{p+1}
\otimes g_{q-1})$, and that $h_p\otimes I_q (Ker(I_p\otimes 
d_q(\rho_2-\beta)))\subseteq Ker(I_{p+1}\otimes d_q(\rho_2-\beta))$. \par

\indent Now, for $p=0, \ldots , p_1-1$ and  
$q=k-p_1+1$ we define $H_{p,q}$ as follows:
$$
H_{p,q}\mid R(I_p\otimes g_{q-1})=
1/2(h_p\otimes I_q),\hbox{} H_{p,q}\mid Ker(I_p\otimes d_q
(\rho_2-\beta))=h_p\otimes I_q.
$$
According to the properties of the tensor product,
the map $H_{p,q}$ is well defined.\par

\indent In addition, for $p=0,\ldots , p_1-1$ and
$q-1=k-p_1$, according to the first case, we have the well-defined 
map $H_{p,q-1}$. On the other hand, for $p-1$, 
$p=0,\ldots , p_1-1$, and $q=k-p_1+1$, we may define $H_{p-1,q}$ 
in a similar way as we did with $H_{p,q}$.\par

\indent Now, using (i)-(iii) it is easy to prove that
$$
d_{r+1}H_{p,q} +(H_{p,q-1}\oplus H_{p-1,q}) d_r=I,
$$
where $d$ and $I$ are as above.\par

\indent In the third case $p=0,\ldots, p_1-1$ and $q>k-p_1+1$.
There are two subcases: $q-1>k-p_1+1$ 
and $q-1=k-p_1+1$. We begin with the first
subcase.\par

\indent For $ p=0,\ldots ,p_1-1$ and $q>q-1>k-p_1+1$ we define 
$$
H_{p,q}\mid X_1\otimes\wedge^p L_1\tilde{\otimes}X_2\otimes
\wedge^q L_2\to X_1\otimes\wedge^{p+1} L_1\tilde{\otimes}X_2
\otimes\wedge^q L_2, \hbox{} H_{p,q}=h_p\otimes I_q.
$$
According to the properties of the tensor product, $H_{p,q}$ is a well-defined 
map.\par
\indent Moreover, since $q-1>q>k-p_1+1$, we may define 
$H_{p-1,q}$ and $H_{p,q-1}$ in a similar way. Then, an easy calculation shows that
$$
d_{r+1}H_{p,q} +(H_{p,q-1}\oplus H_{p-1,q}) d_r=I.
$$
\indent On the other hand, for $p=0,\ldots , p_1-1$ and $q=k-p_1+1$, 
we define $H_{p,q}=h_p\otimes I_q$. Furthermore,
for $p-1$ and $q=k-p_1+1$, we may define $H_{p-1,q}=
h_{p-1}\otimes I_q$, but for $p=0,\ldots , p_1-1$ and $q-1= k-p_1$, 
$H_{p,q-1}$ was defined in the second case. 
However, a direct calculation shows that
$$
d_{r+1}H_{p,q} +(H_{p,q-1}\oplus H_{p-1,q}) d_r=I.
$$  
 
\indent In the fourth case $p=p_1$ and $q\le k-p_1$. This case is similar 
to the second one.\par

\indent We consider the complex associated to the representation 
$\rho_1-\alpha$, i.e., $(X_1\otimes \wedge L_1, d(\rho_1-\alpha ))$. We know that 
for $p=0,\ldots ,p_1-1$ 
there are bounded maps
$$
X_1\otimes\wedge^{p-1} L_1\xrightarrow{h_{p-1}}X_1\otimes\wedge^p L_1
\xrightarrow{h_p}X_1\otimes\wedge^{p+1}L_1,
$$
such that $d_{p+1}(\rho_1-\alpha)h_p+h_{p-1}d_p(\rho_1-\alpha)
=I_p$. \par
   
\indent Moreover, as in the second case,  
we may suppose that the maps $h_p$ satisfy the preliminary facts recalled before Proposition
4.2, for $p=0,\ldots ,p_1-1$. Furthermore, according to Proposition 4.2 and Remark 4.3,
we have 
$X_1\otimes\wedge^p L_1
\tilde{\otimes }X_2\otimes\wedge^q L_2= Ker(d_p(\rho_1-\alpha )\otimes I_q)
\oplus R(h_{p-1}\otimes I_q)$, for $p=0,\ldots , p_1$ and 
$q=0,\ldots , m$.\par

\indent As in the second case, it is easy to prove that \par
(i) $I_p\otimes d_q(\rho_2-\beta ) (R(h_{p-1}\otimes I_q))\subseteq
R(h_{p-1}\otimes I_{q-1})$ and $I_p\otimes d_q(\rho_2-\beta )
(Ker(d_p(\rho_1-\alpha )\otimes I_q))\subseteq
Ker(d_p(\rho_1-\alpha )\otimes I_{q-1})$,\par
(ii) $d_p(\rho_1-\alpha )\otimes I_q(Ker (d_p(\rho_1-\alpha)\otimes I_q))=0$ 
and $d_p(\rho_1-\alpha )\otimes I_q(R (h_{p-1}\otimes I_q))=
R(d_p(\rho_1-\alpha)\otimes I_q)=Ker(d_{p-1}(\rho_1-\alpha)\otimes I_q)$.\par

\indent In addition, for $q=0,\ldots ,k-p_1 $, we have well-defined maps, 
$(g_q)_{0\le q\le k-p_1}$, such that $g_q\colon X_2\otimes\wedge^q 
L_2\to X_2\otimes\wedge^{q+1} L_2$,
and that $d_{q+1}(\rho_2-\beta )g_p+g_{q-1}d_{q-1}(\rho_2-\beta)=I_q$. A 
straightforward calculation shows \par
(iii) $I_p\otimes g_q (R(h_{p-1}\otimes I_q ))\subseteq R(h_{p-1}
\otimes I_{q+1})$ and $I_p\otimes g_q (Ker(d_p(\rho_1-\alpha)\otimes 
I_q))\subseteq Ker(d_p(\rho_1-\alpha)\otimes I_{q+1})$. \par

\indent Now for $p= p_1$ and $0\le q\le k-p_1$,
we define $H_{p_1,q}$ as follows:
$$
H_{p_1,q}\mid R(h_{p_1-1}\otimes I_q)=(-1)^p1/2(I_p\otimes g_q),
$$
$$
H_{p_1,q}\mid Ker(d_{p_1}(\rho_1-\alpha )\otimes I_q )=(-1)^pI_p
\otimes g_q.
$$
According to the properties of the tensor product, $H_{p_1,q}$
is a well-defined map.\par

\indent In addition, according to the first case, we have 
the well-defined map $H_{p_1-1,q}$, $p=p_1-1$ and
$q=0,\ldots ,k-p_1$. On the other hand, we may define $H_{p_1,q-1}$ 
like $H_{p_1,q}$, $p=p_1$ and
$q-1=0,\ldots , k-p_1$.\par

\indent Now, as in the second case, using (i)-(iii) 
it is easy to prove that
$$
d_{r+1}H_{p_1,q} +(H_{p_1-1,q}\oplus H_{p_1,q}) d_r=
I.
$$

\indent In the last case, we have  
$p\ge p_1+1$ and $q=0,\ldots ,k-p_1$.  
Moreover, as in the third case, there are two subcases: 
$p-1\ge p_1+1$, and $p-1=p_1$.
We begin with the first subcase.\par

\indent For $ p>p-1\ge p_1+1$ and $q=0,\ldots ,k-p_1$, we define 
$$
H_{p,q}\mid X_1\otimes\wedge^p L_1\tilde{\otimes}X_2\otimes
\wedge^q L_2\to X_1\otimes\wedge^p L_1\tilde{\otimes}X_2
\otimes\wedge^{q+1} L_2, \hbox{} H_{p,q}=I_p\otimes g_q.
$$
According to the properties of the tensor product, the map $H_{p,q}$ is
well defined.\par

\indent Since $p-1>p_1+1$, we may define $H_{p-1,q}$ 
and $H_{p,q-1}$. Then, an easy calculation shows that
$$
d_{r+1}H_{p,q} +(H_{p,q-1}\oplus H_{p-1,q}) d_r=I.
$$
\indent On the other hand, for $p-1= p_1$ and $q=0,\ldots ,k-p_1$, 
we define $H_{p,q}=I_p\otimes g_q$. Moreover,
for $p-1=p_1$ and $q$, $H_{p-1,q}$ was defined in the fourth case,
and for $p$ and $q-1$, we may define $H_{p,q-1}=I_p\otimes 
g_{q-1}$. 
However, a direct calculation shows 
$$
d_{r+1}H_{p,q} +(H_{p-1,q}\oplus H_{p,q-1}) d_r=I.
$$  

\indent Since we considered all the possible cases for 
$p$ and $q$, $0\le p+q\le k$, if for $r=0,\ldots , k$ we consider
the map $H_r=\bigoplus_{p+q=r}H_{p,q}$, then the above computations
show that $(H_r)_{0\le r\le k}$ is a homotopy for the 
complex $(X_1\otimes\wedge L_1,d(\rho_1-\alpha))\tilde
{\otimes}(X_2\otimes\wedge L_2, d(\rho_2-\beta))$. Thus, 
according to Proposition 4.4,
$(\alpha ,\beta)$ does not belong to $sp_{\delta ,k}(\rho)$.\par

\indent The second part of the theorem may be proved 
by a similar argument, using the second half of Proposition 4.2
for the inclusion on the right. 

\end{proof}   

\indent We recall that in Chapter 3, Section 3 of [21], the axiomatic tensor
in [14] was generalized. However, as it was explained
in Chapter 3, Section 3 of [21], the objective was to simplify 
the form of the axioms rather than to generalize the definition
in [14]; in addition, the known applications of both tensor products
coincide. As in the way we prove the main results in this work,
the definition in [14] is more useful than the one in [21], we proved 
Theorem 5.1 and shall prove the other results 
for the tensor product introduced in [14]. In 
particular, Theorem 5.1
may be seen as an extension of [21, Korollar 3.6.8]
for the tensor product in [14].
However, we believe that with the axiomatic tensor product
introduced in Chapter 3, Section 3 of [21], it would be possible to obtain results,
which would be similar to the main ones in this 
work.\par  

\indent Now we consider nilpotent systems of operators and we
prove a variant of Theorem 5.1 for this case. This result 
extends [21, Satz 3.7.2] for the tensor product in [14].
Moreover, the following theorem is an 
extension of well-known results for
commuting tuples of operators; see [9], [10], [28] and [14].
First we give a definition.\par

\indent Let $X$ be a complex Banach space and $T=
(T_1,\ldots ,T_n)$ an $n$-tuple of operators defined in $X$,
such that the linear subspace of ${\rm L}(X)$ generated
by them, $\langle T_i\rangle_{1\le i\le n}=L$, is a nilpotent Lie subalgebra 
of ${\rm L}(X)$. We consider the representation defined
by the inclusion $\iota_L\colon L\to{\rm L}(X)$. Then, if 
$\sigma $ denotes a subset of a joint spectrum defined for representations
of complex solvable finite dimensional Lie algebras, we denote
the set $
\{(\alpha (T_1),\ldots ,\alpha (T_n)): 
\alpha\in \sigma(\iota_L)\}$ by $\sigma (T)$.\par

\begin{thm}Let $X_1$ and $X_2$ be two complex Banach 
spaces. We suppose 
that there is a tensor product of $X_1$ and $X_2$ with respect 
to $\langle X_1,X_1{'}\rangle$ and $\langle X_2,X_2{'}\rangle$, 
$X_1\tilde{\otimes} 
X_2$. Let $a=(a_1,\ldots ,a_n)$ and $b=(b_1,\ldots ,b_m)$ be two 
tuples of operators, $a_i\in {\rm L}(X_1)$, $1\le i\le n$, and 
$b_j\in {\rm L}(X_2)$, $1\le j\le m$, such that the vector 
subspaces generated by them, $\langle a_i\rangle_{1\le i\le n}$ and
$\langle b_j\rangle_{1\le j\le m}$, are nilpotent Lie subalgebras
of ${\rm L}(X_1)$ and ${\rm L}(X_2)$ respectively.
We consider the $(n+m)$-tuple of  
operators defined in $X_1\tilde{\otimes}X_2$,
$c=(a_1\otimes I ,\ldots ,a_n\otimes I ,I\otimes b_1,\ldots ,
I\otimes b_m)$,
where $I$ denotes the identity of $X_2$ and $X_1$ respectively. Then
we have 
$$
{\rm (i)} \hbox{}\bigcup_{p+q=k}\sigma_{\delta ,p}(a)\times 
\sigma_{\delta ,q}(b)\subseteq\sigma_{\delta ,k}(c)
\subseteq sp_{\delta ,k} (c)\subseteq \bigcup_{p+q=k}sp_{\delta ,p}
(a)\times sp_{\delta ,q}(b),
$$
$$
 {\rm (ii)}\hbox{}\bigcup_{p+q=k}\sigma_{\pi ,p}(a)\times 
\sigma_{\pi ,q}(b)\subseteq\sigma_{\pi ,k}(c)
\subseteq sp_{\pi ,k} (c)\subseteq \bigcup_{p+q=k}sp_{\pi ,p}
(a)\times sp_{\pi ,q}(b).
$$
\indent In particular, if $X_1$ and $X_2$ are Hilbert spaces, 
the above inclusions are equalities.\par
\end{thm}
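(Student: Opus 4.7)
The plan is to reduce Theorem 5.2 to the previously proven Theorem 5.1, with Proposition 3.11 serving as the bridge. First I form auxiliary nilpotent Lie algebras: set $L_1=\langle a_i\rangle_{1\le i\le n}\subseteq{\rm L}(X_1)$ and $L_2=\langle b_j\rangle_{1\le j\le m}\subseteq{\rm L}(X_2)$, both nilpotent by hypothesis, so that the inclusions $\iota_i\colon L_i\to{\rm L}(X_i)$ are representations. The direct product $L=L_1\times L_2$ is a nilpotent complex finite dimensional Lie algebra, and the tensor product representation
$$
\rho\colon L\to{\rm L}(X_1\tilde{\otimes} X_2),\qquad \rho(l_1,l_2)=\iota_1(l_1)\otimes I+I\otimes\iota_2(l_2),
$$
is exactly the kind of representation considered in Theorem 5.1. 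Applying that theorem with $\rho_1=\iota_1$ and $\rho_2=\iota_2$ gives
\begin{align*}
\bigcup_{p+q=k}\sigma_{\delta,p}(\iota_1)\times\sigma_{\delta,q}(\iota_2) &\subseteq\sigma_{\delta,k}(\rho)\subseteq sp_{\delta,k}(\rho)\\
&\subseteq\bigcup_{p+q=k}sp_{\delta,p}(\iota_1)\times sp_{\delta,q}(\iota_2),
\end{align*}
the analogous chain in the $\pi$-case, and equalities throughout when $X_1$ and $X_2$ are Hilbert spaces.

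The next step is to identify the image $L_c=\rho(L)\subseteq{\rm L}(X_1\tilde{\otimes} X_2)$. Because $[a_i\otimes I,\,I\otimes b_j]=0$, $L_c$ equals the linear span of the coordinates of $c$, which is a nilpotent Lie subalgebra. Letting $\iota_c\colon L_c\to{\rm L}(X_1\tilde{\otimes} X_2)$ be the inclusion and $f\colon L\to L_c$ be the Lie algebra epimorphism induced by $\rho$, one has the factorization $\rho=\iota_c\circ f$. Proposition 3.11(i),(iii) then yields
$$
\sigma_{\delta,k}(\rho)=\sigma_{\delta,k}(\iota_c)\circ f,\qquad sp_{\delta,k}(\rho)=sp_{\delta,k}(\iota_c)\circ f,
$$
together with the parallel identities for the $\pi$-joint spectra.

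To finish, I translate from characters to tuple values. Under the canonical identification $L^*\cong L_1^*\times L_2^*$, define $ev(\beta,\gamma)=(\beta(a_1),\ldots,\beta(a_n),\gamma(b_1),\ldots,\gamma(b_m))$. By the very definition of $\sigma_*(a)$, $\sigma_*(b)$, $sp_*(a)$, $sp_*(b)$, the map $ev$ sends $\sigma_{\delta,p}(\iota_1)\times\sigma_{\delta,q}(\iota_2)$ onto $\sigma_{\delta,p}(a)\times\sigma_{\delta,q}(b)$ and $sp_{\delta,p}(\iota_1)\times sp_{\delta,q}(\iota_2)$ onto $sp_{\delta,p}(a)\times sp_{\delta,q}(b)$; combining the factorization $\rho=\iota_c\circ f$ with the identity from Proposition 3.11 shows that $ev$ also sends $\sigma_{\delta,k}(\rho)$ onto $\sigma_{\delta,k}(c)$ and $sp_{\delta,k}(\rho)$ onto $sp_{\delta,k}(c)$. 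Applying $ev$ to the chain obtained from Theorem 5.1 produces the chain asserted for $c$, and because $ev$ transports equalities to equalities the Hilbert space statement follows. The $\pi$-case is identical mutatis mutandis. I anticipate no substantive obstacle: Theorem 5.1 supplies the main spectral content, Proposition 3.11 mediates the passage from the abstract algebra $L$ to the concrete algebra $L_c$, and the remaining work is routine bookkeeping between $L^*$, $L_c^*$ and $\mathbb{C}^{n+m}$.
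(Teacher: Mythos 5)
Your proposal is correct and follows essentially the same route as the paper: form $L_1=\langle a_i\rangle$, $L_2=\langle b_j\rangle$ and the tensor product representation of $L_1\times L_2$, apply Theorem~5.1, factor that representation through the inclusion of the (span-)subalgebra generated by the coordinates of $c$ via an epimorphism, and invoke Proposition~3.11 to transport the joint spectra, finishing with the character-to-tuple identification. The only differences are notational (your $L_c$, $\iota_c$, $f$ are the paper's $L$, $\iota$, $h$).
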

 
\begin{proof}
 
\indent We consider the nilpotent Lie algebras $L_1=\langle a_i
\rangle_{1\le i\le n}$
and $L_2=\langle b_j\rangle_{1\le j\le m}$, and the  
representations of the above algebras defined by the inclusion, i.e.,
$$
\iota_1\colon L_1\to {\rm L}(X_1),\hskip2cm\iota_2\colon L_2
\to {\rm L}(X_2).
$$
Then, if we consider the representation $\iota=\iota_1\times
\iota_2\colon L_1\times L_2\to {\rm L}(X_1
\tilde{\otimes}X_2)$, according to Theorem 5.1 we have 
$$
 \hbox{}\bigcup_{p+q=k}\sigma_{\delta ,p}(\iota_1)\times 
\sigma_{\delta ,q}(\iota_2)\subseteq\sigma_{\delta ,k}(\iota)
\subseteq sp_{\delta ,k} (\iota)\subseteq \bigcup_{p+q=k}sp_{\delta ,p}
(\iota_1)\times sp_{\delta ,q}(\iota_2).
$$
\indent Now, if we consider the identification of the characters
of $L_1\times L_2$ with the cartesian product of the characters of
$L_1$ and $L_2$, it is clear that $\sigma_{\delta ,p}(a)\times 
\sigma_{\delta ,q}(b)$ coincides with the set
$$
\{(\alpha (a_1 ),\ldots ,\alpha (a_n),\beta (b_1),\dots , 
\beta (b_m)): (\alpha ,\beta )
\in \sigma_{\delta ,p}(\iota_1)\times \sigma_{\delta ,q}(\iota_2)\}.
$$
\indent Similarly, $sp_{\delta ,p}(a)\times sp_{\delta ,q}(b)$
coincides with
$$
\{(\alpha (a_1),\ldots ,
\alpha (a_n),\beta (b_1),\ldots ,\beta (b_m)):
(\alpha ,\beta )\in sp_{\delta ,p}(\iota_1)\times sp_{\delta ,q}
(\iota_2)\}.
$$

\indent On the other hand, we consider the nilpotent Lie subalgebra of
${\rm L} (X_1\tilde{\otimes }X_2)$ generated by the
elements of the $(n+m)$-tuple $c$; we denote it by $L$.
Then, if $\iota\colon L\to{\rm L} (X_1\tilde{\otimes }X_2)$ 
is the representation defined by the inclusion, we have 
$\iota_1\times \iota_2=\iota\circ h$, where $h\colon L_1\times
L_2\to L$ is the epimorphism of Lie algebras that satifies 
$h(a_i)=a_i\otimes I$ and $h(b_j)=I\otimes b_j$,
for
$i=1,\ldots ,n$ and $j=1,\ldots ,m$. In particular, according to Proposition 3.11 we have 
$$
\sigma_{\delta ,k}(\iota_1\times \iota_2)=\sigma_{\delta ,k}
(\iota)\circ h, \hbox{  } sp_{\delta ,k}(\iota_1\times \iota_2)=
sp_{\delta ,k}(\iota)\circ h.
$$
\indent However, 
$$\sigma_{\delta ,k}(c)=\{(\gamma\circ h(a_1),
\ldots ,\gamma\circ h (a_n),\gamma\circ h (b_1),\ldots ,
\gamma\circ h (b_m)):\gamma\in\sigma_{\delta ,k}
(\iota )\}.
$$
Moreover, according to Proposition 3.11 $\sigma_{\delta ,k}(c)$ coincides with 
$$
\{(\alpha (a_1),\ldots ,\alpha (a_n),\beta (b_1),\ldots ,
\beta (b_m)): (\alpha ,\beta)\in \sigma_{\delta ,k }
(\iota_1\times \iota_2)\}.
$$

Similarly,
$$
sp_{\delta ,k}(c)=\{(\alpha (a_1),\ldots ,\alpha (a_n),\beta (b_1),
\ldots ,
\beta (b_m)): (\alpha ,\beta)\in sp_{\delta ,k }
(\iota_1\times \iota_2)\}.
$$
\indent Thus, the above equalities prove the
first part of the theorem. \par

\indent The second statement may be proved by a similar
argument.\par
\end{proof}

\section{Fredholm joint spectra of the tensor product representation}

\indent In this section we consider two representation of Lie algebras 
in two Banach spaces and
a tensor product of the Banach spaces in the sense of [14],
and we describe the essential S\l odkowski and 
the essential split joint spectra of the tensor product representation of the 
direct sum of the algebras; see section 4. In addition, we apply our
results to nilpotent systems of operators.
We first prove a result needed for the main
theorem in this section.\par
 
\begin{pro} Let $X_1$ and 
$X_2$ be two Banach spaces. We suppose that there is a 
tensor product  of $X_1$
and $X_2$ relative to $\langle X_1, X_1{'}\rangle$ and $\langle 
X_2,X_2{'}\rangle$,
$X_1\tilde{\otimes}X_2$. We consider in $X_1$ and $X_2$ two
projectors with finite dimensional range , $k_1$ and $k_2$ respectively. 
Then $k_1\otimes k_2\in {\rm L}(X_1\tilde{\otimes}X_2)$ is a 
projector with 
finite dimensional range. In fact, $R(k_1\otimes k_2)=R(k_1)\otimes R(k_2)$.
\end{pro}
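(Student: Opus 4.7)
The projection property is almost immediate from the axioms: by (T3) one has $(k_1\otimes k_2)\circ(k_1\otimes k_2)=(k_1 k_1)\otimes(k_2 k_2)=k_1\otimes k_2$, so $k_1\otimes k_2$ is idempotent, and by the boundedness built into Definition 4.1 it lies in ${\rm L}(X_1\tilde{\otimes}X_2)$. The substance of the statement is the identification of its range as the finite-dimensional space $R(k_1)\otimes R(k_2)$.

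To obtain $R(k_1\otimes k_2)\subseteq R(k_1)\otimes R(k_2)$, I would first write $k_1$ and $k_2$ as finite sums of rank-one operators of the $f_{\cdot,\cdot}$ form. Choose bases $\{x_1,\ldots,x_n\}$ of $R(k_1)$ and $\{y_1,\ldots,y_m\}$ of $R(k_2)$; since $k_i\in{\mathcal L}(X_i)$ is a continuous finite-rank projector, the associated coordinate functionals lie in the paired spaces $\tilde{X}_1$, $\tilde{X}_2$, so that $k_1=\sum_{i=1}^n f_{x_i,u_i}$ and $k_2=\sum_{j=1}^m f_{y_j,v_j}$ with $u_i\in\tilde{X}_1$, $v_j\in\tilde{X}_2$. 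By the bilinearity of $(T,S)\mapsto T\otimes S$ from Definition 4.1 one then has $k_1\otimes k_2=\sum_{i,j}f_{x_i,u_i}\otimes f_{y_j,v_j}$.

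Each summand I would factor via (T3) as $f_{x_i,u_i}\otimes f_{y_j,v_j}=(f_{x_i,u_i}\otimes I)\circ(I\otimes f_{y_j,v_j})$. By the second half of (T4), the range of $I\otimes f_{y_j,v_j}$ is contained in $\{x\otimes y_j:x\in X_1\}$; applying $f_{x_i,u_i}\otimes I$ to such an element and using (T2) yields $\langle x,u_i\rangle\, x_i\otimes y_j$, which lives in the one-dimensional subspace $\Bbb C\cdot(x_i\otimes y_j)$. By linearity and continuity on the closed linear span of $\{x\otimes y_j:x\in X_1\}$ (the target being already a $1$-dimensional, hence closed, subspace), $R(f_{x_i,u_i}\otimes f_{y_j,v_j})\subseteq\Bbb C\cdot(x_i\otimes y_j)$, and summing over $i,j$ gives $R(k_1\otimes k_2)\subseteq\mathrm{span}\{x_i\otimes y_j:1\le i\le n,\,1\le j\le m\}=R(k_1)\otimes R(k_2)$.

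For the reverse inclusion I would simply note that if $x\in R(k_1)$ and $y\in R(k_2)$, then (T2) gives $(k_1\otimes k_2)(x\otimes y)=k_1(x)\otimes k_2(y)=x\otimes y$, so every elementary tensor in $R(k_1)\otimes R(k_2)$ belongs to $R(k_1\otimes k_2)$, and linearity completes the equality. The main obstacle is the middle step: one must correctly orchestrate the factorization provided by (T3) so that the range constraints of (T4) apply to the \emph{outer} factor, and one must promote a pointwise computation on pure tensors of the form $x\otimes y_j$ to a statement about the entire range of each summand. Once this is in place, the remainder of the proof is algebraic.
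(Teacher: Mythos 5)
Your proof is correct, and it streamlines the paper's argument in a genuinely different way for the inclusion $R(k_1\otimes k_2)\subseteq R(k_1)\otimes R(k_2)$. Both approaches start from the same decomposition $k_1=\sum_i f_{x_i,u_i}$, $k_2=\sum_j f_{y_j,v_j}$ and the same (T3)-factorization $f_{x_i,u_i}\otimes f_{y_j,v_j}=(f_{x_i,u_i}\otimes I)\circ(I\otimes f_{y_j,v_j})$. The paper, however, invokes the slicing maps of [14, Lemma~1.1] twice, writing $k_1\otimes k_2(z)=\sum_{i,j}v_i\otimes f_{l_i}(g_{h_j}(z)\otimes v_j')$; since the slicing map $f_{l_i}$ lands in all of $X_2$ rather than $R(k_2)$, this only gives $R(k_1\otimes k_2)\subseteq R(k_1)\otimes X_2$, and the paper must then apply the idempotency $(k_1\otimes k_2)^2=k_1\otimes k_2$ a second time to push the second leg into $R(k_2)$. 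You instead use (T4) to conclude that the range of the \emph{inner} factor $I\otimes f_{y_j,v_j}$ already consists of elementary tensors of the form $x\otimes y_j$, and then evaluate the outer factor $f_{x_i,u_i}\otimes I$ on such tensors via (T2), landing directly in the one-dimensional span of $x_i\otimes y_j$. This gets $R(k_1\otimes k_2)\subseteq\mathrm{span}\{x_i\otimes y_j\}=R(k_1)\otimes R(k_2)$ in one pass, with no need for the idempotency step or for [14, Lemma 1.1]. (A minor remark: your ``linearity and continuity on the closed linear span'' clause is not actually needed — the range of $I\otimes f_{y_j,v_j}$ is set-theoretically contained in the elementary tensors $\{x\otimes y_j\}$, so (T2) applies pointwise and the conclusion is immediate.) Both arguments are sound; yours is shorter and closer to the axioms.
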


\begin{proof}

\indent According to the properties of the tensor product, it is clear that 
$k_1\otimes k_2$ is a projector and that
$R(k_1\otimes k_2)\supseteq R (k_1)\otimes R(k_2)$.\par

\indent In order to prove the other inclusion, we consider 
a base of $R(k_1)$, $(v_i)_{1\le i\le n}$, i.e.,
$R(k_1)=\langle v_i\rangle_{1\le i\le n}$.
Then we have $X_1=Ker( k_1)
\oplus_{i=1}^n \langle v_i\rangle$. Moreover, if for each $s=1,\ldots , n$ we
consider the map $l_s\colon X_1\to \Bbb C$, $l_s\mid Ker (k_1)
\equiv 0$,
$l_s\mid \langle v_i\rangle\equiv 0$, $i=1,\ldots , n$, $i\neq s$, and $l_s(v_s)
=1$, then we may define the maps $f_{v_i l_i}\colon X_1\to X_1$, 
$f_{v_i l_i}(x_1)=l_i(x_1)v_i$, for $x_1\in X_1$. Now, an easy
calculation shows that $k_1=\sum_{i=1}^n f_{v_il_i}$.\par

\indent In addition, we may consider a base of $R(k_2)$,
$(v_j{'})_{1\le j\le m}$, 
and then we have $X_2=Ker ( k_2)\oplus_{j=1}^m \langle v_j^{'}\rangle$. 
Moreover, if for $j=1,\ldots ,m$ we consider the maps $h_j\colon X_2\to \Bbb C$, 
$h_j\mid Ker (k_2)\equiv 0$, $h_j(v_t^{'})=0$, $t=1,
\ldots , m$,
$t\neq j$, and $h_j(v_j^{'})=1$, then we may define the maps
$f_{v_j^{'}h_j}\colon X_2\to X_2$, $f_{v_j^{'}h_j}(x_2)=
h_j(x_2)v_j{'}$, for $x_2\in X_2$. As above, an easy 
calculation shows that $k_2=
\sum_{j=1}^m f_{v_j^{'} h_j}$.\par

\indent Now, according to the properties of the tensor product, we have 
$$
k_1\otimes k_2= \sum_{i,j} f_{v_il_i}\otimes f_{v_j^{'}h_j}=
\sum_{i,j}  f_{v_il_i}\otimes I\circ I\otimes f_{v_j^{'}h_j}.
$$ 

\indent Moreover, by [14, Lemma 1.1], for each $l_i$, $i=1,\ldots ,n$, there
is a map $f_{l_i} \colon X_1\tilde{\otimes}X_2\to X_2$ such that
$f_{x_1l_i}\otimes I(z)=x_1\otimes f_{l_i}(z)$, 
for $x_1\in X_1$
and $z\in X_1\tilde{\otimes}X_2$, where $f_{x_1l_i}\colon X_1\to
X_1$ is the map $f_{x_1l_i}(x)=l_i(x)x_1$. In addition, for each
$h_j$, $j=1,\ldots ,m$, there is a map $g_{h_j}\colon X_1
\tilde{\otimes}X_2\to X_1$
such that $I\otimes f_{x_2h_j}(z)=g_{h_j}(z)\otimes x_2$, for
$x_2\in X_2$ and $z\in X_1\tilde{\otimes}X_2$, where 
$f_{x_2h_j}$ has a definition similar to the one of $f_{x_1l_i}$. 
In particular, for
$z\in X_1\tilde{\otimes}X_2$ we have 

\begin{align*}
k_1\otimes k_2(z)&=\sum_{i,j}  f_{v_il_i}\otimes I\circ I\otimes 
f_{v_j^{'}h_j}(z)=\sum_{i,j}  f_{v_il_i}\otimes I( g_{h_j}(z)
\otimes v_j^{'})\\
                                  &=\sum_{i,j} v_i\otimes f_{l_i}(g_{h_j}(z)
\otimes v_j^{'}).\\
\end{align*}

\indent Thus, $R (k_1\otimes k_2)\subseteq R( K_1)\otimes X_2$.\par

\indent Moreover, since $k_2$ is a projection, if for $z\in X_1
\tilde
{\otimes}X_2$ we denote $z_{ij}=f_{l_i}(g_{h_j}(z)\otimes 
v_j^{'})$, then
we have $z_{ij}=k_2(z_{ij} )+(I-k_2)(z_{ij})$. In particular
$$
k_1\otimes k_2(z)=\sum_{i,j} v_i\otimes z_{ij}=\sum_{i,j} v_i
\otimes k_2(z_{ij})+\sum_{i,j} v_i\otimes (I-k_2)(z_{ij}).
$$ 

\indent However, since 
$k_1\otimes k_2$ is a projector in $X_1\tilde
{\otimes}X_2$, we have 
$$
k_1\otimes k_2(z)=(k_1\otimes k_2)^2(z)=\sum_{i,j} v_i\otimes 
k_2(z_{ij}).
$$
In particular, $R( k_1\otimes k_2)\subseteq R( k_1)\otimes R(K_2)$.\par
\end{proof}
 
\indent Now we state the main result of this section. The
following theorem is an extension of [14, Theorem 3.2].\par 

\begin{thm}  Let $X_1$ and $X_2$ be two complex Banach 
spaces, $L_1$ and $L_2$ two complex solvable 
finite dimensional Lie algebras, and $\rho_i \colon L_i\to 
{\rm L}(X_i)$, $i=1$, $2$,
two representations of Lie algebras. 
We suppose
that there is a tensor product of $X_1$ and $X_2$ 
relative 
to $\langle X_1,X_1{'}\rangle$ and $\langle X_2,X_2{'}\rangle$, $X_1\tilde{\otimes} 
X_2$. Then, if we consider the tensor product representation 
of the direct sum of $L_1$ and $L_2$, $\rho=\rho_1\times
\rho_2\colon L_1\times L_2\to {\rm L}(X_1\tilde{\otimes} X_2)$,
we have 
\begin{align*}
{\rm (i)} \hbox{}&\bigcup_{p+q=k}\sigma_{\delta ,p,e}(\rho_1)\times 
\sigma_{\delta ,q}(\rho_2 )\bigcup\bigcup_{p+q=k}\sigma_{\delta ,p}(\rho_1)
\times\sigma_{\delta ,q ,e}(\rho_2)
\subseteq\sigma_{\delta ,k ,e}(\rho)
\subseteq \\
&sp_{\delta ,k,e} (\rho)\subseteq \bigcup_{p+q=k}sp_{\delta ,p,e}
(\rho_1)\times sp_{\delta ,q}(\rho_2)\bigcup\bigcup_{p+q=k}
sp_{\delta ,p}(\rho_1)
\times sp_{\delta ,q,e}(\rho_2),\\\end{align*}

\begin{align*}
 {\rm (ii)}\hbox{}&\bigcup_{p+q=k}\sigma_{\pi ,p,e}(\rho_1)\times 
\sigma_{\pi ,q}(\rho_2 )\bigcup\bigcup_{p+q=k}\sigma_{\pi ,p}(\rho_1)
\times\sigma_{\pi ,q ,e}(\rho_2)
\subseteq\sigma_{\pi ,k ,e}(\rho)\subseteq \\
&sp_{\pi ,k,e} (\rho)\subseteq \bigcup_{p+q=k}sp_{\pi ,p,e}
(\rho_1)\times sp_{\pi ,q}(\rho_2)\bigcup\bigcup_{p+q=k}sp_{\pi ,p}
(\rho_1)
\times sp_{\pi ,q,e}(\rho_2).\\\end{align*}

In particular, if $X_1$ and $X_2$ are Hilbert spaces, the above 
inclusions are equalities.\par
\end{thm}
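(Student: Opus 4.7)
The plan is to mirror the structure of the proof of Theorem 5.1, replacing ``split'' by ``Fredholm split'' and ``$H_*\ne 0$'' by ``$\dim H_*=\infty$'' throughout, while invoking Proposition 6.1 to ensure that the compact error terms which arise on the tensored complex are in fact of finite rank.

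For part (i), the leftmost inclusion follows from an essential K\"unneth argument. Given $\alpha\in\sigma_{\delta ,p,e}(\rho_1)$ and $\beta\in\sigma_{\delta ,q}(\rho_2)$ with $p+q=k$, I would pick indices $p_1\le p$ and $q_2\le q$ realizing $\dim H_{p_1}(X_1\otimes\wedge L_1,d(\rho_1-\alpha))=\infty$ and $H_{q_2}(X_2\otimes\wedge L_2,d(\rho_2-\beta))\ne 0$ respectively. Applying [14, Theorem 2.2] to the associated differential spaces and tracking dimensions through its structure map $\varphi$ yields $\dim H_{p_1+q_2}(\mathcal{X}_1\tilde{\otimes}\mathcal{X}_2)=\infty$; Proposition 4.4 then transfers this to the Koszul complex of $\rho-(\alpha,\beta)$, placing $(\alpha,\beta)\in\sigma_{\delta ,k,e}(\rho)$. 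The symmetric contribution from $\sigma_{\delta ,p}(\rho_1)\times\sigma_{\delta ,q,e}(\rho_2)$ is handled identically with the roles of the factors reversed. The middle inclusion is immediate, since a Fredholm-split complex has finite-dimensional homology by [13, Theorem 2.7].

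For the rightmost inclusion, I would assume $(\alpha,\beta)$ lies outside the right-hand union and construct a Fredholm-split homotopy for the total complex $(X_1\otimes\wedge L_1,d(\rho_1-\alpha))\tilde{\otimes}(X_2\otimes\wedge L_2,d(\rho_2-\beta))$ at each degree $r\le k$, transferring the conclusion to the Koszul complex of $\rho-(\alpha,\beta)$ via Proposition 4.4. The construction runs through the same five-case breakdown as in the proof of Theorem 5.1, with Fredholm-split homotopies $h_p^{(1)}$ or $g_q^{(2)}$ (satisfying $dh+hd=I-k$ for finite-rank projections $k$) replacing split ones where appropriate, and using the decompositions of Proposition 4.2 and Remark 4.3 to define $H_{p,q}$ piecewise on each block.

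The main obstacle lies in controlling the error $E_r=I-(d_{r+1}H_r+H_{r-1}d_r)$ so that it is compact rather than merely bounded. Where both factors are fully split, the Theorem 5.1 homotopies apply verbatim and the error vanishes. In a doubly-Fredholm block, the naive choice $\tfrac12(h_p\otimes I_q+(-1)^p I_p\otimes g_q)$ produces an error of the form $\tfrac12(k_p^{(1)}\otimes I_q+I_p\otimes k_q^{(2)})$, which is \emph{not} compact; to circumvent this, I would exploit Proposition 4.2 to decompose $X_1\otimes\wedge^p L_1\tilde{\otimes}X_2\otimes\wedge^q L_2$ into the invariant pieces $R(h_{p-1}\otimes I_q)$, $R(I_p\otimes g_{q-1})$, and the intersection of their kernels, and redefine $H_{p,q}$ piecewise with scaling chosen so that the composite error collapses onto $R(k_p^{(1)})\otimes R(k_q^{(2)})$; by Proposition 6.1 this is a finite-rank operator. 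The mixed edge cases at $p=p_1$, $q=k-p_1+1$ and their analogues demand parallel piecewise definitions. Assembling $H_r=\bigoplus_{p+q=r}H_{p,q}$ and verifying the Fredholm identity block by block is the most delicate part of the argument. Part (ii) follows by the symmetric construction using the second halves of Propositions 4.2 and 4.4, or alternatively via the duality of Theorem 3.4 applied to $\rho_i^*$; the Hilbert-space equalities are immediate because the inclusions in Theorem 5.1 are equalities there and the distinction between Fredholm-splitness and finite-dimensional homology collapses.
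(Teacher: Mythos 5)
Your treatment of the left inclusion, the middle inclusion, and the Hilbert-space equalities agrees with the paper's. The divergence — and the trouble — is in the rightmost inclusion. You propose to directly manufacture a Fredholm-split homotopy $(H_r)_{r\le k}$ on the tensored complex, mimicking the five-case construction of Theorem 5.1, and you correctly identify the obstacle: on blocks where \emph{both} factors are only Fredholm split (which occur for $p_0\le p\le p_1$, $k-p_1\le q\le k-p_0$), the naive $\tfrac12(h_p\otimes I_q\oplus(-1)^pI_p\otimes g_q)$ leaves an error $\tfrac12(k_p\otimes I_q+I_p\otimes k'_q)$ that is not compact. Your proposed repair — a piecewise redefinition ``with scaling chosen so that the composite error collapses onto $R(k_p^{(1)})\otimes R(k_q^{(2)})$'' — is left as an assertion, and this is precisely the step that is hardest; you give no indication of what the scalings or subspaces are, nor why the piecewise-defined $H_{p,q}$ would cohere with the adjacent $H_{p\pm1,q}$ and $H_{p,q\mp1}$ to give the Fredholm homotopy identity globally. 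That is a genuine gap, not a detail. Note also that the Theorem 5.1 case analysis never encounters a double-Fredholm block, because there every block has at least one factor fully split; your plan does not explain how those cases (which simply do not arise in 5.1) are to be handled.

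The paper takes a different and less constructive route for the right inclusion, and this is worth contrasting. It proceeds by induction on $k$. After disposing of $k=0$ by a direct homotopy with finite-rank error (via Proposition 6.1), the inductive step does \emph{not} attempt to build a homotopy at degree $k$. Instead, using the inductive hypothesis that the complex is Fredholm split below degree $k$, it invokes the characterization in [13, Theorem 2.7]: Fredholm-splitness at degree $k$ follows from $\dim\operatorname{Ker}(d_k)/R(d_{k+1})<\infty$ together with $\operatorname{Ker}(d_{k+1})$ being complemented. The paper then uses the decompositions of Proposition 4.2 (on the strips where one factor is fully split) to write $\operatorname{Ker}(d_{k+1})$ explicitly as a direct sum of graphs plus finitely many finite-dimensional pieces of the form $R(I_p\otimes k'_q)\cap\operatorname{Ker}(d_p(\rho_1-\alpha)\otimes I_q)$ and $R((I-S)(I-T))$, and similarly shows that $\operatorname{Ker}(d_k)$ equals $R(d_{k+1})$ plus finitely many finite-dimensional summands (controlled by Proposition 6.1). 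This homological/dimensional computation is exactly what replaces the Fredholm homotopy you would need to construct by hand, and it is the piece missing from your proposal. If you want to salvage a direct homotopy construction, you would at minimum need to make the piecewise $H_{p,q}$ on the double-Fredholm blocks explicit and verify block-by-block compatibility; as written, the existence of such a homotopy has not been shown.
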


\begin{proof}

\indent First of all, in the proof of this theorem we use the 
notations and identifications of
Theorem 5.1. In particular, if $\alpha $ is a character of $L_1$
and $\beta$ is a character of $L_2$ we work with the complex
$(X_1\otimes
\wedge L_1,d(\rho_1-\alpha ))\tilde{\otimes}(X_2\otimes\wedge L_2,
d(\rho_2-\beta ))$ instead of the Koszul complex associated to
the representation $\rho-(\alpha ,\beta )\colon L_1\times L_2\to
{\rm L}(X_1\tilde{\otimes }X_2)$. We begin with the first statement.\par

\indent In order to prove the inclusion on the left, the same
argument 
used in Theorem 5.1 for the $\sigma_{\delta ,k}$ joint
spectra may be
applied to the essential $\delta$-S\l odkowski joint spectra. In
fact,
the argument still works when we consider two homology spaces, one of
which is non null and the other is infinite dimensional,
instead of considering two non
null homology
spaces.\par 

\indent As in Theorem 5.1, the middle inclusion is clear.\par

\indent With regard to the inclusion on the right, we shall
prove it by an induction argument.\par

\indent First of all we study the case $k=0$.\par

\indent  We consider a pair $(\alpha ,\beta)\in sp_{\delta ,0,e}
(\rho)\setminus (sp_{\delta ,0,e}(\rho_1)
\times sp_{\delta ,0}(\rho_2)\cup sp_{\delta ,0}(\rho_1)
\times sp_{\delta ,0,e}(\rho_2))$.
Now, since according to Theorem 5.1 $(\alpha ,\beta)\in sp_{\delta ,0}
(\rho_1)
\times sp_{\delta ,0}(\rho_2)$, we have $\alpha\in
sp_{\delta ,0}(\rho_1)
\setminus sp_{\delta ,0 ,e}(\rho_2)$ and $\beta\in
sp_{\delta ,0}(\rho_2)
\setminus sp_{\delta ,0,e}(\rho_2)$. In particular, there are bounded
linear maps
$$
h_0\colon X_1\to X_1\otimes\wedge^1 L_1,\hskip1cm g_0
\colon X_2\to X_2\otimes \wedge^1 L_2,
$$
and finite range projectors
$$
k_0\colon X_1\to X_1,\hskip1cm k^{'}_0\colon X_2\to X_2,
$$
such that
$$
d_1(\rho_1-\alpha)h_0=I_0-k_0,\hskip1cm d_1(\rho_2-\beta)g_0=I_0-k^{'}_0.
$$
\indent Now, if we consider the map
$$
H_0\colon X_1\tilde{\otimes}X_2\to X_1\tilde{\otimes}
X_2\otimes\wedge^1 L_2\oplus X_1\otimes\wedge^1 L_1
\tilde{\otimes} X_2,\hbox{     } H_0=(I_0\otimes g_0
,h_0\otimes I_0),
$$
then it is easy to prove that
$$
d_1H_0=I-k_0\otimes k^{'}_0,
$$
where $d$ and $I$ denote the boundary and the identity of the complex
$(X_1\otimes
\wedge L_1,d(\rho_1-\alpha ))\tilde{\otimes}(X_2\otimes\wedge L_2,
d(\rho_2-\beta ))$ respectively.\par
However, according to Proposition 6.1, the map $k_0\otimes k^{'}_0$ 
is a projector with
finite dimensional range. In particular, according to Proposition 4.4 
$(\alpha ,\beta)$
does not belong to $sp_{\delta ,k,e}(\rho)$, which is impossible
according to our assumption.\par

\indent Now we suppose that the statement on the right is
true for $0$ and for all natural numbers lower than $k$, and for $k$ we
prove the inclusion on the right. We proceed
as in the case $k=0$.\par

\indent We consider a pair 
$(\alpha ,\beta)\in sp_{\delta
 ,k,e}(\rho)$ such that it does not belong to 
$$
(\bigcup_{p+q=k}
 sp_{\delta ,p,e}
(\rho_1)\times sp_{\delta ,q}(\rho_2)\bigcup\bigcup_{p+q=k}
sp_{\delta ,p}(\rho_1)
\times sp_{\delta ,q,e}(\rho_2)).$$ 
In particular,
$$(\alpha ,\beta)\notin (\bigcup_{p+q=k-1}
 sp_{\delta ,p,e}
(\rho_1)\times sp_{\delta ,q}(\rho_2)\bigcup\bigcup_{p+q=k-1}
sp_{\delta ,p}(\rho_1)
\times sp_{\delta ,q,e}(\rho_2)).$$ Thus, by the inductive hypothesis
$(\alpha ,\beta)\notin sp_{\delta ,k-1,e}(\rho)$.\par

\indent In addition, since according to Theorem 5.1,
$$sp_{\delta ,k, e}(\rho)\subseteq sp_{\delta ,k}(\rho)\subseteq \
\bigcup_{p+q=k}sp_{\delta ,p}(\rho_1)\times sp_{\delta ,q}(\rho_2)$$
there are $p_0$ and $q_0$, $p_0+q_0=k$, such that $\alpha
\in sp_{\delta ,p_0}(\rho_1)$ and $\beta\in sp_{\delta ,q_0}(\rho_2)$.
Moreover, we may suppose that $p_0=min\{p,\hbox{} 0\le p\le k
:\alpha\in sp_{\delta ,p}(\rho_1)\}$. In particular,
it is easy to prove that the following assertions are
true:\par

(i) $\alpha\in sp_{\delta ,p_0}(\rho_1)$, $\alpha\notin
sp_{\delta ,p}(\rho_1)$, $p=0,\ldots ,p_0-1$, and
$\beta\notin sp_{\delta ,q_0,e}(\rho_2)$,\par

(ii) $\beta\in sp_{\delta ,q_0}(\rho_2)$, and either $\alpha\notin
sp_{\delta ,k,e}(\rho_1)$ and $\beta\in sp_{\delta ,0}(\rho_2)$,
or there is $p_1$, $p_0\le p_1\le k-1$,
such that $\alpha \notin sp_{\delta ,p_1,e}(\rho_1)$,
$\alpha \in sp_{\delta ,p_1+1,e}(\rho_1)$, and $\beta\notin sp_{\delta ,k-p_1-1}
(\rho_2)$.\par

\indent By means of assertions (i) and (ii), we prove that $\dim 
Ker(d_k)/R(d_{k+1})$ is finite, and that $Ker
(d_{k+1})$ is a complemented subspace. Since $(\alpha ,\beta )
\notin sp_{\delta ,k-1 ,e}(\rho)$, by [13, Theorem 2.7], we have  
$(\alpha ,\beta)\notin sp_{\delta ,k ,e}(\rho )$, which is impossible 
according to our assumption.\par

\indent On the other hand, we work with assertion (i) and 
the second part of
assertion (ii). The other case is similar and in fact easier.
\par

\indent By (i) and (ii) there are
bounded linear operators
$h_p\colon X_1\otimes\wedge^p L_1\to X_1\otimes\wedge^{p+1} L_1$,
$p=0,\ldots ,p_1$, and there are projectors with finite dimensional
range, $k_p\colon X_1\otimes\wedge^p L_1\to X_1\otimes\wedge^p L_1$,
$p=p_0,\ldots , p_1$, such that for $p=0,\ldots , p_0-1$,
$$
h_{p-1}d_p(\rho_1-\alpha)+d_{p+1}(\rho_1-\alpha )h_p=I_p,
$$
and for $p=p_0,\ldots , p_1$,
$$
h_{p-1}d_p(\rho_1-\alpha)+d_{p+1}(\rho-\alpha)h_p=I_p-k_p.
$$
\indent In addition, by (i) and (ii) there are bounded
linear maps $g_q\colon X_2\otimes\wedge^q L_2\to X_2\otimes
\wedge^{q+1} L_2$, $q=0,\ldots , q_0=
k-p_0$, 
and there are projectors with finite 
dimensional range, $k_q^{'}\colon X_2\otimes\wedge^q L_2\to 
X_2\otimes\wedge^q L_2$, $q=k-p_1,\ldots , q_0$, 
such that for $q=0,\ldots ,k-p_1-1$,
$$
g_{q-1}d_q(\rho_2-\beta )+d_{q+1}(\rho_2-\beta)g_q=I_q,
$$
and for $q=k-p_1,\ldots , q_0$,
$$
g_{q-1}d_q(\rho_2-\beta)+d_{q+1}(\rho_2-\beta)g_q=I_q-k^{'}_q.
$$
 
\indent In order to prove that $Ker(d_{k+1})$ is a complemented 
subspace of
$\bigoplus_{p+q=k+1}$\par \noindent $X_1\otimes\wedge^pL_1\tilde{\otimes }X_2\otimes
\wedge^q L_2$, we first characterize it and then show a  
complement.\par

\indent It is easy to prove that $Ker (d_{k+1})$ is the set of all 
$(x_{p,q})$, $p+q=k+1$, $x_{p,q}\in X_1\otimes\wedge^p L_1
\tilde{\otimes }X_2\otimes\wedge^q L_2$ such that in 
$X_1\otimes\wedge^{p-1}L_1\tilde{\otimes }X_2\otimes
\wedge^q L_2$ 
$$
d_p(\rho_1-\alpha)\otimes I_{q}(x_{p,q})+(-1)^{p-1}I_{p-1}\otimes 
d_{q+1}(\rho_2-\beta)(x_{p-1,q+1})=0.
$$
\indent According to Proposition 4.2, we know that for $q=0,\ldots ,k-p_1$
and $p+q=k+1$,
$$
X_1\otimes\wedge^p L_1\tilde{\otimes }X_2\otimes
\wedge^q L_2=R(I_{p}\otimes g_{q-1})\oplus Ker(I_p\otimes 
d_{q}(\rho_2-\beta )),
$$
$$
X_1\otimes\wedge^{p-1}L_1\tilde{\otimes }X_2\otimes
\wedge^q L_2=R(I_{p-1}\otimes g_{q-1})\oplus Ker(I_{p-1}\otimes 
d_{q}(\rho_2-\beta )),
$$
and for $q=0,\ldots,  k-p_1-1$,
$$
X_1\otimes\wedge^{p-1}L_1\tilde{\otimes }X_2\otimes
\wedge^{q+1} L_2=R(I_{p-1}\otimes g_q)\oplus Ker(I_{p-1}\otimes 
d_{q+1}(\rho_2 -\beta)).
$$
\indent In particular, we may present each $x_{p,q}\in X_1\otimes
\wedge^p L_1\tilde{\otimes }X_2\otimes
\wedge^q L_2$, $p+q=k+1$, $q=0,\ldots ,k-p_1$, as $x_{p,q}=(a_{p,q},b_{p,q})$,
where $a_{p,q}\in Ker(I_p\otimes d_{q}(\rho_2-\beta ))$ and $b_{p,q}\in
R(I_{p}\otimes g_{q-1})$. \par

\indent On the other hand, according to Proposition 4.2
$$
I_{p-1}\otimes g_{q}\colon Ker(I_{p-1}\otimes d_{q}(\rho_2-\beta
))\to R(I_{p-1}\otimes g_q)$$
is a topological isomorphism for $q=0,\ldots , k-p_1-1$. Then, an easy calculation shows that
$x_{k+1,0}=a_{k+1,0}$ and that $b_{p,q}=(-1)^{p+1}d_{p+1}(\rho_1-\alpha )\otimes g_{q-1}
(a_{p+1,q-1})$, for $q=1,\ldots , k-p_1$.\par

\indent Thus, $x_{p,q}$ is described for $q=0,\ldots ,k-p_1-1$
and $p$ such that $p+q=k+1$. However, we may continue this procedure
till $q=k-p_0$.\par

\indent In fact, according to Proposition 4.2 the above 
decompositions 
of the spaces $X_1\otimes\wedge^pL_1\tilde{\otimes }X_2\otimes
\wedge^q L_2$, $X_1\otimes\wedge^{p-1}L_1\tilde{\otimes }X_2\otimes
\wedge^q L_2$ and $X_1\otimes\wedge^{p-1}L_1\tilde{\otimes }X_2\otimes
\wedge^{q+1} L_2$ remain true for $q=k-p_1,\ldots , q_0+1=k-p_0+1$.
Moreover, according to Proposition 4.2, it is easy to prove that $Ker(I_{p-1}\otimes 
d_{q}(\rho_2-\beta))=
R(I_{p-1}\otimes k^{'}_q)\oplus R(I_{p-1}\otimes 
d_{q+1}(\rho_2-\beta ))$, $q=k-p_1,\ldots , k-p_0$,
and that
$$
I_{p-1}\otimes g_q\colon R(I_{p-1}\otimes d_{q+1}(\rho_2-\beta 
))\to R(I_{p-1}\otimes g_q)
$$
is a topological isomorphism. Then, if for $q=k-p_1,\ldots , k-p_0+1$
we decompose $x_{p,q}=((a_{p,q}^1,a_{p,q}^2,), b_{p,q})$,
where $a_{p,q}^1\in R(I_p\otimes d_{q+1}(\rho_2-\beta))$, 
$a^2_{p,q}\in
R(I_{p}\otimes k^{'}_q)$ and $b_{p,q}\in
R(I_{p}\otimes g_{q-1})$, an easy calculation shows that
$a_{p,q}^2\in R(I_{p}\otimes k^{'}_q)
\cap Ker(d_{p}(\rho_1-\alpha )\otimes I_q)$, $q=k-p_1,\ldots ,k-p_0$,
and $b_{p,q}=(-1)^{p+1}d_{p+1}(\rho_1-\alpha )\otimes g_{q-1}
(a_{p+1,q-1}^1)$, $q=k-p_1,\ldots , k-p_0+1$.\par

\indent On the other hand, by a similar argument, it is possible to
prove the following fact. 
If we consider for $p=0,\ldots , p_0$ the decomposition 
$$ 
X_1\otimes\wedge^pL_1\tilde{\otimes }X_2\otimes
\wedge^q L_2=R(h_{p-1}\otimes I_q)\oplus Ker(d_{p}
(\rho_1-\alpha )\otimes 
I_q),
$$
and we present $x_{p,q}$ as $x_{p,q}=(c_{p,q},d_{p,q})$,
where $c_{p,q}\in Ker(d_{p}(\rho_1-\alpha )\otimes I_q)$ and $d_{p,q}\in
R(h_{p-1}\otimes I_q)$, then $x_{0,k+1}=c_{0,k+1}$
and $d_{p,q}=(-1)^ph_{p-1}
\otimes d_{q+1}(\rho_2-\beta)(c_{p-1,q+1})$, for $p=1,\ldots , p_0$. \par

\indent Thus, if $(x_{p,q})$, $p+q=k+1$, belongs to $Ker(d_{k+1})$,
$x_{p,q}$ is described for $p=0,\ldots , p_0-1$ and $q=0,\dots ,
k-p_0$. In order to characterize $Ker(d_{k+1})$ in a complete way,  
we have to consider $X_1\otimes\wedge^{p_0} L_1\tilde{\otimes}
X_2\otimes \wedge^{k+1-p_0}L_2$.\par

\indent In $X_1\otimes\wedge^{p_0} L_1\tilde{\otimes}
X_2\otimes \wedge^{k+1-p_0}L_2$, we have two well-defined projectors,
$$
S=I_{p_0}\otimes g_{k-p_0}d_{k-p_0+1}(\rho_2-\beta),\hskip1cm T=
h_{p_0-1}d_{p_0}(\rho_1-\alpha)\otimes I_{k-p_0+1}.
$$
Moreover, since $S$ commutes with $T$, $X_1\otimes\wedge^{p_0} L_1\tilde{\otimes}
X_2\otimes \wedge^{k+1-p_0}L_2$ may be decomposed as the direct
sum of the ranges of the operators $ST$, $S(I-T)$, $(I-S)T$ and
$(I-S)(I-T)$, and each $x$ that belongs to this space may be
decomposed as $x=(x_{ST},x_{S(I-T)}, x_{(I-S)T}, x_{(I-S)(I-T)})$.
\par

\indent Now, if $(x_{p,q})$, $p+q=k+1$, belongs to $Ker (d_{k+1})$, 
in order to determine $x_{p_0,k-p_0+1}$ it is enough to consider the
equations in which it takes part, i.e.,
$$    
d_{p_0+1}(\rho_1-\alpha)\otimes I_{k-p_0}(x_{p_0+1,k-p_0})+(-1)^{p_0}
I_{p_0}\otimes d_{k+1-p_0}(\rho_2-\beta ))(x_{p_0,k+1-p_0})=0,
$$
$$
d_{p_0}(\rho_1-\alpha)\otimes I_{k+1-p_0}(x_{p_0,k+1-p_0})+(-1)^{p_0-1}
I_{p_0-1}\otimes d_{k+2-p_0}(\rho_2-\beta)(x_{p_0-1,k+2-p_0})=0.
$$

\indent In addition, an easy calculation shows that if we present 
$x_{p_0,k-p_0+1}=x$ in the above decomposition, $x_{ST}=0$,
$x_{(I-S)T}=d_{p_0,k+1-p_0}$, $x_{S(I-T)}=b_{p_0,k+1-p_0}$, 
and $x_{(I-S)(I-T)}$ is an arbitrary element in the range of 
$(I-S)(I-T)$.\par

\indent Thus, $Ker (d_{k+1})$ may be presented as the direct sum of
the following spaces. \par
(i) For $q=0,\ldots ,k-p_0$, the graph of the map $(-1)^pd_{p}(\rho_1-\alpha)
\otimes g_q\colon R(I_p\otimes d_{q+1}(\rho_2-\beta ))\to R(I_{p-1}
\otimes g_q)$, $p+q=k+1$.\par
(ii) For $q=k-p_1,\ldots , k-p_0$, $R(I_p\otimes k_q^{'})\cap Ker 
(d_{p}(\rho_1-\alpha)\otimes I_q)$, $p+q=k+1$.\par
(iii) For $p=0,\ldots , p_0-1$, the graph of the map $(-1)^ph_p
\otimes d_{q}(\rho_2-\beta )\colon R(d_{p+1}(\rho_1-\alpha)\otimes I_q)\to R(h_p
\otimes I_{q-1})$, $p+q=k+1$.\par
(iv) The range of the projector $(I-S)(I-T)$.\par
 
\indent In order to construct a direct complement of $Ker 
(d_{k+1})$ we need the following observations.\par

\indent First, if $X$ and $Y$ are Banach spaces and $T\in {\rm L}
(X,Y)$, then $X\oplus Y=Graph(T)\oplus Y$.\par

\indent Second, an easy calculation shows that $R(I_p\otimes k_q^{'})\cap Ker( d_{p}(\rho_1-\alpha)
\otimes I_q)\oplus R(h_{p-1}d_{p}(\rho_1-\alpha)\otimes k^{'}_q)=R(I_p\otimes
k_q^{'})$, for $q=k-p_1,\ldots 
,k-p_0$.\par

\indent Now, depending on $p$ and $q$, $p+q=k+1$, 
the space $X_1\otimes\wedge^p 
L_1\tilde{\otimes} X_2\otimes\wedge^q L_2$ is equal to the direct 
sum of the following spaces.\par
(i) For $p=0,\ldots ,p_0-1$, $R(d_{p+1}(\rho_1-\alpha)\otimes I_q)$ and 
$R(h_{p-1}\otimes I_q)$.\par 
(ii) For $q=0,\ldots ,k-p_0$, $R(I_p\otimes d_{q+1}(\rho_2-\beta))$, 
$R(I_p\otimes g_{q-1})$ and $R(I_p\otimes k_q^{'})$; 
when  $q=0,\ldots , k-p_1-1$, we have 
$k_q^{'}=0$.\par 
(iii) For $p=p_0$ and $q=k-p_0+1$, the ranges of the operators 
$ST$, $S(I-T)$, $(I-S)T$ and $(I-S)(I-T)$.\par

\indent Then, if we consider $V$, the space defined by the direct sum
of the sets, $R(h_{p-1}\otimes I_q)$, $p=0,\ldots ,p_0$,
$R(I_p\otimes g_{q-1})$, $q=0,\ldots ,k-p_0+1$,
$R(h_{p-1}d_{p}(\rho_1-\alpha)\otimes k^{'}_q)$, $q=k-p_1,\ldots 
,k-p_0$, and $R(ST)$ for $p=p_0$ and $q=k-p_0+1$,
we have $\bigoplus_{p+q=k+1}X_1\otimes\wedge^p 
L_1\tilde{\otimes} X_2\otimes\wedge^q L_2=Ker( d_{k+1})\oplus V$.
\par

\indent We now prove that $\dim Ker d_{k}/R(d_{k+1})$ is finite.\par

\indent As with 
$Ker (d_{k+1})$,
we may present $Ker (d_k)$ as the direct sum of the following spaces.\par
(i) For $q=0,\ldots , k-p_0-1$, the graph of $(-1)^pd_{p}(\rho_1-\alpha)
\otimes g_q\colon R(I_p\otimes d_{q+1}(\rho_2-\beta))\to R(I_{p-1}
\otimes g_q)$, $p+q=k$.\par
(ii) For $q=k-p_1,\ldots ,k-p_0-1$, $R(I_p\otimes k_q^{'})\cap 
Ker( d_{p}(\rho_1-\alpha)
\otimes I_q)$, $p+q=k$.\par
(iii) For $p=0,\ldots , p_0-1$, the graph of $(-1)^ph_p
\otimes d_{q}(\rho_2-\beta )\colon R(d_{p+1}(\rho_1-\alpha)\otimes I_q)\to R(h_p
\otimes I_{q-1})$, $p+q=k$.\par
(iv) For $p=p_0$ and $q=k-p_0$, the range of the projector $(I-S)(I-T)$,
where 
$$
S=I_{p_0}\otimes g_{k-p_0-1}d_{k-p_0}(\rho_2-\beta),\hskip1cm T=
h_{p_0-1}d_{p_0}(\rho_1-\alpha)\otimes I_{k-p_0}.
$$
\indent Now we consider $p$ and $q$ such that $p+q=k$ and
$q=0,\ldots , k-p_0-1$. Then, if we consider $(-1)^pI_{p}\otimes g_{q}
(a)$, $a\in R(I_p\otimes d_{q+1}(\rho_2-\beta))$, it is easy to 
prove that
$(a, (-1)^pd_{p}(\rho_1-\alpha)\otimes g_q(a))\in R(d_{k+1})$. Thus, the graph
of $(-1)^pd_{p}(\rho_1-\alpha)
\otimes g_q\colon R(I_p\otimes d_{q+1}(\rho_2-\beta ))\to R(I_{p-1}
\otimes g_q)$ is contained in $R(d_{k+1})$.\par

\indent In a similar way, we may prove that the graph of $(-1)^ph_p
\otimes d_{q}(\rho_2-\beta)\colon R(d_{p+1}(\rho_1$
$-\alpha)\otimes I_q)\to R(h_p
\otimes I_{q-1})$, $p+q=k$, $p=0,\ldots ,p_0-1$, is contained
in $R(d_{k+1})$.\par
 
\indent We denote the following spaces by $S_{p,q}$, $p+q=k$.\par
(i) For $q=k-p_1,\ldots , k-p_0-1$, $S_{p,q}=R(I_p\otimes k_q^{'})
\cap Ker (d_{p}(\rho_1-\alpha)\otimes I_q)$.\par
(ii)  For $p=p_0$ and $q=k-p_0$, $S_{p,q}=R(I-S)(I-T)$.\par

\indent Since $k-p_1\le q\le k-p_0$ and $p_0\le p\le p_1$,
we may consider the well-defined map
$$
H_{p,q}\colon X_1\otimes \wedge^p L_1\tilde{\otimes}
X_2\otimes \wedge^qL_2\to X_1\otimes \wedge^{p+1}L_1\tilde{\otimes}
X_2\otimes \wedge^q L_2\oplus X_1\otimes \wedge^p L_1
\tilde{\otimes}X_2\otimes \wedge^{q+1} L_2,
$$
$$
H_{p,q}=h_p\otimes I_q+k_p\otimes g_q.
$$
\indent Moreover, if we define $k_{p_0-1}=0$ and $k^{'}_{k-p_1-1}
=0$, then we may define the corresponding maps $H_{p-1,q}$ and
$H_{p,q-1}$, and an easy calculation shows that
$$
(H_{p-1,q}\oplus H_{p,q-1})d_k +d_{k+1}H_{p,q}=I-k_p\otimes 
k^{'}_q.
$$
However, since $S_{p,q}$ is contained in $Ker (d_k)$, 
$$
d_{k+1}( H_{p,q}(S_{p,q}) )+ k_p\otimes k{'}_q(S_{p,q})=
S_{p,q}.
$$
Thus, according to Proposition 6.1, the codimension of 
$R(d_{k+1})$ in $Ker( d_k)$  is finite.\par 

\indent The second statement of the theorem may be proved by a 
similar argument, using the second part of
Proposition 4.2.\par 
                              
\end{proof}

\indent As in the last section, we consider two nilpotent systems
of operators and prove a variant of Theorem 6.2 for this case.
In particular, in the commuting case we obtain
an extension of [14, Theorem 3.2].
 \par

\begin{thm}Let $X_1$ and $X_2$ be two complex Banach 
spaces. We suppose 
that there is a tensor product of $X_1$ and $X_2$ with respect 
to $\langle X_1,X_1{'}\rangle$ and $\langle X_2,X_2{'}\rangle$, $X_1\tilde{\otimes} 
X_2$. Let $a=(a_1,\ldots ,a_n)$ and $b=(b_1,\ldots ,b_m)$ be two 
tuples of operators, $a_i\in {\rm L}(X_1)$, $1\le i\le n$, and 
$b_j\in {\rm L}(X_2)$, $1\le j\le m$, such that the vector 
subspaces generated by them, $\langle a_i\rangle_{1\le i\le n}$ and
$\langle b_j\rangle_{1\le j\le m}$, are nilpotent Lie subalgebras
of ${\rm L}(X_1)$ and ${\rm L}(X_2)$ respectively.
We consider the $(n+m)$-tuple of  
operators defined in $X_1\tilde{\otimes}X_2$,
$c=(a_1\otimes I ,\ldots ,a_n\otimes I ,I\otimes b_1,\ldots ,
I\otimes b_m)$,
where $I$ denotes the identity of $X_2$ and $X_1$ respectively. Then
we have 

\begin{align*}
{\rm (i)}& \hbox{}\bigcup_{p+q=k}\sigma_{\delta ,p,e}(a)\times 
\sigma_{\delta ,q}(b)\bigcup\bigcup_{p+q=k} \sigma_{\delta ,p}(a)\times
\sigma_{\delta ,q,e}(b)\subseteq\sigma_{\delta ,k,e}(c)
\subseteq\\
&sp_{\delta ,k,e} (c)\subseteq \bigcup_{p+q=k}sp_{\delta ,p,e}
(a)\times sp_{\delta ,q}(b)\bigcup\bigcup_{p+q=k}sp_{\delta ,p}(a)
\times sp_{\delta ,q,e}(b),\\\end{align*}

\begin{align*}
 {\rm (ii)}& \hbox{}\bigcup_{p+q=k}\sigma_{\pi ,p,e}(a)\times 
\sigma_{\pi ,q}(b)\bigcup\bigcup_{p+q=k} \sigma_{\pi ,p}(a)\times
\sigma_{\pi ,q,e}(b)\subseteq\sigma_{\pi ,k,e}(c)
\subseteq \\
&sp_{\pi ,k,e} (c)\subseteq \bigcup_{p+q=k}sp_{\pi ,p,e}
(a)\times sp_{\pi ,q}(b)\bigcup\bigcup_{p+q=k}sp_{\pi ,p}(a)
\times sp_{\pi ,q,e}(b).\\\end{align*}
\end{thm}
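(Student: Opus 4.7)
The plan is to reduce Theorem 6.3 to Theorem 6.2 by the same device used in the proof of Theorem 5.2, with Proposition 3.11 (parts (ii) and (iv)) replacing its non-essential analogues. First I introduce the nilpotent Lie subalgebras $L_1 = \langle a_i\rangle_{1\le i\le n}\subseteq {\rm L}(X_1)$ and $L_2 = \langle b_j\rangle_{1\le j\le m}\subseteq {\rm L}(X_2)$, together with their inclusion representations $\iota_1\colon L_1\to {\rm L}(X_1)$ and $\iota_2\colon L_2\to {\rm L}(X_2)$. Since $L_1\times L_2$ is nilpotent and the tensor product $X_1\tilde{\otimes} X_2$ is given, Theorem 6.2 applied to the tensor product representation $\iota_1\times\iota_2\colon L_1\times L_2\to {\rm L}(X_1\tilde{\otimes} X_2)$ yields the desired chains of inclusions for $\sigma_{\delta,k,e}(\iota_1\times\iota_2)$, $sp_{\delta,k,e}(\iota_1\times\iota_2)$ and their $\pi$-counterparts, expressed as subsets of characters of $L_1\times L_2$.

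Next I transfer these inclusions from $\iota_1\times\iota_2$ to the tuple $c$. Let $L\subseteq {\rm L}(X_1\tilde{\otimes} X_2)$ be the Lie subalgebra generated by the entries of $c$ and let $\kappa\colon L\to {\rm L}(X_1\tilde{\otimes} X_2)$ be its inclusion representation. Axiom (T3) forces $[a_i\otimes I,\,I\otimes b_j]=0$ for all $i$, $j$, and, combined with the nilpotency of $L_1$ and $L_2$, this implies that $L$ is nilpotent; indeed it is a quotient of $L_1\times L_2$ under the natural Lie epimorphism $h\colon L_1\times L_2\to L$ determined by $h(a_i,0)=a_i\otimes I$ and $h(0,b_j)=I\otimes b_j$, so that $\iota_1\times\iota_2=\kappa\circ h$. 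Proposition 3.11, parts (ii) and (iv), then gives
$$
\sigma_{\delta,k,e}(\iota_1\times\iota_2)=\sigma_{\delta,k,e}(\kappa)\circ h,\qquad sp_{\delta,k,e}(\iota_1\times\iota_2)=sp_{\delta,k,e}(\kappa)\circ h,
$$
and the corresponding equalities for the $\pi$-essential joint spectra.

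Finally I unwind the definitions of $\sigma_{*}(a)$, $\sigma_{*}(b)$, $sp_{*}(a)$, $sp_{*}(b)$, $\sigma_{*}(c)$, $sp_{*}(c)$ as evaluations of characters, exactly as in the proof of Theorem 5.2. Under the identification $(L_1\times L_2)^{*}\cong L_1^{*}\times L_2^{*}$, each set of the form $\sigma_{\delta,p,e}(a)\times\sigma_{\delta,q}(b)$ is the image of $\sigma_{\delta,p,e}(\iota_1)\times\sigma_{\delta,q}(\iota_2)$ under evaluation on the generators $(a_1,\ldots,a_n,b_1,\ldots,b_m)$, and similarly for all the other terms in the statement; correspondingly, $\sigma_{\delta,k,e}(c)$ and $sp_{\delta,k,e}(c)$ are identified with the images of $\sigma_{\delta,k,e}(\kappa)$ and $sp_{\delta,k,e}(\kappa)$, which by the previous step equal the images of $\sigma_{\delta,k,e}(\iota_1\times\iota_2)$ and $sp_{\delta,k,e}(\iota_1\times\iota_2)$. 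Substituting into the chains of inclusions from the first step yields statement (i), and the argument for (ii) is identical with $\pi$ replacing $\delta$ throughout.

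Since all the hard analytic content has been packed into Theorem 6.2 and Proposition 3.11, the only real work is the combinatorial bookkeeping that matches the character identifications across the epimorphism $h$; I do not expect any essential obstacle beyond keeping the notation straight.
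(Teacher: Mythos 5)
Your proposal is correct and follows exactly the paper's intended route: the paper's own proof of this theorem says only ``Adapt the argument in Theorem 5.2,'' and your argument is precisely that adaptation, replacing Theorem 5.1 with Theorem 6.2 and invoking parts (ii) and (iv) of Proposition 3.11 for the essential spectra in place of parts (i) and (iii). The reduction through the Lie epimorphism $h\colon L_1\times L_2\to L$ and the subsequent unwinding of character identifications are identical to the steps in Theorem 5.2.
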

 
\begin{proof}
\indent Adapt the argument in Theorem 5.2.\par 

\end{proof}
 
\section{Joint spectra of the multiplication representation}

\indent In this section we deal with an operator ideal in the sense of
J. Eschmaier, see [14] or below. These operator ideals are 
naturally a tensor product of two Banach spaces, and since
the multiplication representation may be seen as a
tensor product representation, we shall extend the 
results in sections 5 and 6 to the multiplication representation. 
We begin with the definition
of an operator ideal in the sense of J. Eschmeier.\par
  
\begin{df} An operator ideal $J$ between Banach spaces $X_2$ 
and $X_1$ is a linear subspace of ${\rm L}(X_2,X_1)$ equipped 
with a 
space norm $\alpha$ such that\par
{\rm (i)}  $x_1\otimes x_2{'}\in J$ and $\alpha (x_1\otimes x_2{'})=
\parallel x_1
\parallel \parallel x_2{'}\parallel$,\par
 {\rm (ii)} $SAT\in J$ and $\alpha (SAT)\le \parallel S\parallel \alpha (A) 
\parallel T\parallel$, \par

for $x_1\in X_1$, $x_2{'}\in X_2{'}$, $A\in J$, 
$S\in {\rm L}
(X_1)$,
$T\in{\rm L}(X_2)$, and $x_1\otimes x_2{'}$ is the usual rank one
operator $X_2\to X_1$, $x_2\mapsto <x_2,x_2{'}>x_1$.\par
\end{df}

\indent Examples of this kind of ideals are given in [14, Section 1].\par

\indent We recall that such an operator ideal $J$ is naturally a tensor
product relative to $\langle X_1,X_1{'}\rangle$ and $\langle X_2{'},
X_2\rangle$, with the 
bilinear mappings
$$
X_1\times X_2{'}\to J,\hbox{  } (x_1,x_2{'})\mapsto x_1\otimes x_2{'},
$$
$$
{\mathcal L}(X_1)\times{\mathcal L}(X_2{'})\to {\rm L}(J),
\hbox{  } (S,T{'})\mapsto S\otimes T{'},
$$
where $S\otimes T{'} (A)=SAT$.\par 

\indent Now, let $L_1$  and $L_2$ be two complex solvable
finite dimensional Lie algebras, $X_1$ and $X_2$ two complex
Banach spaces, and $\rho_i\colon 
L_i\to {\rm L}(X_i)$, $i=1$, $2$, 
two representations of Lie algebras. 
We consider the Lie algebra 
$L_2^{op}$ and the adjoint representation  
$\rho_2^*\colon L_2^{op}\to {\rm L}(X_2{'})$. Now, if $L$ is the direct sum of $L_1$ and $L_2^{op}$,
$L=L_1\times L_2^{op}$, then the multiplication representation of $L$ 
in $J$ considered in Chpater 3, Section 3.6 of [21] is
$$
\tilde{\rho}\colon L\to {\rm L}(J),\hbox{  }\tilde{\rho}(l_1,l_2)(T)=\rho_1 (l_1)T+
T\rho_2 (l_2).  
$$
According to [21, Korollar 3.6.10] $\tilde{\rho}$ is a representation of $L$ in 
${\rm L} (J)$, and when $J$ is viewed as a tensor product of 
$X_1$ and 
$X_2{'}$ relative to $\langle X_1,X_1{'}\rangle$ and $\langle X_2{'},X_2\rangle$, 
$\tilde{\rho}$
coincides with the representation
$$
\rho_1\times \rho_2^*\colon L\to {\rm L}(X_1\tilde{\otimes}X_2{'}),
\hbox{}  \rho_1\times \rho_2^*(l_1,l_2)=\rho_1(l_1)\otimes I
+I\otimes \rho_2^*(l_2).
$$
\indent Moreover, by a similar argument to the one in Proposition 4.4, 
it is easy to prove that the complex 
$(X_1\otimes\wedge L_1,d(\rho_1))\tilde{\otimes}
(X_2\otimes\wedge L_2^{op},d(\rho_2^*))$ is well defined, and 
that it is isomorphic
to the complex $((X_1\tilde{\otimes}X_2{'})\otimes\wedge L,d(\rho_1\times
\rho_2^*))$, which may be
identified 
with the complex $(J\otimes\wedge L, d(\tilde{\rho}))$, when $J$ is viewed 
as a tensor product of $X_1$ and   
$X_2{'}$ relative to $\langle X_1,X_1{'}\rangle$ and 
$\langle X_2{'},X_2\rangle$.
\par

\indent In the following theorems we describe
the joint spectra of the representation $\tilde{\rho}$.\par

\begin{thm}  Let $L_1$ and $L_2$ be two complex solvable 
finite dimensional Lie algebras, $X_1$ and $X_2$ two complex Banach 
spaces, and $\rho_i \colon L_i\to {\rm L}(X_i)$, $i=1$, $2$, 
two representations of Lie algebras. 
We suppose that there is an operator ideal $J$ between $X_2$
and $X_1$ in the sense of 
Definition 7.1, and we present it as the 
tensor product of $X_1$ and $X_2{'}$ relative  
to $\langle X_1,X_1{'}\rangle$ and $\langle X_2{'},X_2\rangle$. Then, if we consider the 
multiplication representation $\tilde{\rho}\colon L_1\times L_2^{op}\to {\rm L}(J)$,
we have 

\begin{align*}
{\rm (i)} &\hbox{}\bigcup_{p+q=k}\sigma_{\delta , p}(\rho_1)\times 
(\sigma_{\pi , m-q}(\rho_2)-h_2)\subseteq\sigma_{\delta  ,k}
(\tilde{\rho})
\subseteq\\
&sp_{\delta , k} (\tilde{\rho})\subseteq
\bigcup_{p+q=k}sp_{\delta , p}
(\rho_1)\times (sp_{\pi , m-q}(\rho_2)-h_2),\\\end{align*}

\begin{align*}
 {\rm (ii)}&\hbox{}\bigcup_{p+q=k}\sigma_{\pi ,p}(\rho_1)\times 
(\sigma_{\delta , m-q}(\rho_2)-h_2)\subseteq
\sigma_{\pi ,k}(\tilde{\rho})
\subseteq\\ 
& sp_{\pi , k} (\tilde{\rho})\subseteq
\bigcup_{p+q=k}sp_{\pi , p}
(\rho_1)\times (sp_{\delta ,m-q}(\rho_2)-h_2),\\\end{align*}

where $h_2$ is the character of $L_2$ considered in Theorem 3.4.\par
\indent In particular, if $X_1$ and $X_2$ are Hilbert spaces, the above 
inclusions are equalities.\par
\end{thm}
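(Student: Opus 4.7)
\indent\textbf{Plan for the proof of Theorem 7.2.}

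The key observation, already recorded in the paragraph preceding the statement, is that when $J$ is viewed as the tensor product $X_1\tilde{\otimes}X_2{'}$ relative to $\langle X_1,X_1{'}\rangle$ and $\langle X_2{'},X_2\rangle$, the multiplication representation $\tilde{\rho}$ is literally the tensor product representation $\rho_1\times\rho_2^*$ of $L=L_1\times L_2^{op}$ on $X_1\tilde{\otimes}X_2{'}$, and its Koszul complex is naturally isomorphic to the complex $(X_1\otimes\wedge L_1,d(\rho_1))\tilde{\otimes}(X_2{'}\otimes\wedge L_2^{op},d(\rho_2^*))$. The strategy is thus to apply Theorem 5.1 to the pair $(\rho_1,\rho_2^*)$ and then convert the joint spectra of $\rho_2^*$ into joint spectra of $\rho_2$ by means of the $\delta$-$\pi$ duality already developed in section 3.

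I would first apply Theorem 5.1 to the representations $\rho_1$ and $\rho_2^*$; this produces, for every $0\le k\le n+m$, the chain of inclusions
$$\bigcup_{p+q=k}\sigma_{\delta,p}(\rho_1)\times\sigma_{\delta,q}(\rho_2^*)\subseteq\sigma_{\delta,k}(\tilde{\rho})\subseteq sp_{\delta,k}(\tilde{\rho})\subseteq\bigcup_{p+q=k}sp_{\delta,p}(\rho_1)\times sp_{\delta,q}(\rho_2^*),$$
together with the analogous statement with $\pi$ replacing $\delta$, and with equalities throughout when $X_1$ and $X_2$ are Hilbert, since in that case the Słodkowski and split joint spectra coincide.

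It then remains to rewrite $\sigma_{\delta,q}(\rho_2^*)$, $\sigma_{\pi,q}(\rho_2^*)$, $sp_{\delta,q}(\rho_2^*)$ and $sp_{\pi,q}(\rho_2^*)$ in terms of the corresponding joint spectra of $\rho_2$. For the essential Słodkowski and essential split spectra this is exactly the content of Theorem 3.4 and Theorem 3.9, which exhibit a character $h_2$ of $L_2$ depending only on the Lie structure of $L_2$ and swap $\delta$ with $\pi$ up to a shift by $h_2$ and an index reversal $q\mapsto m-q$. The non-essential identities
$$\sigma_{\delta,q}(\rho_2^*)=\sigma_{\pi,m-q}(\rho_2)-h_2,\qquad \sigma_{\pi,q}(\rho_2^*)=\sigma_{\delta,m-q}(\rho_2)-h_2,$$
and their split versions follow from the same line of reasoning: Proposition 3.3 together with [5, Theorem 7] and [21, Korollar 2.4.5] for the Słodkowski case, and the isomorphisms of Propositions 3.7 and 3.8 together with [21, Satz 2.4.4] for the split case. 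Substituting these four identities into the inclusions produced by Theorem 5.1 gives precisely the statement of Theorem 7.2.

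The point I expect to be the main obstacle is the non-essential split version of the duality. In the Fredholm case Theorem 3.9 reduces $sp_{\delta,k,e}(\rho)$ and $sp_{\pi,k,e}(\rho)$ to ordinary Słodkowski joint spectra of the quotient representations $\tilde{L}_{\rho}$ and $\tilde{R}_{\rho}$, for which the shift by $h_2$ is a direct consequence of [21, Korollar 2.4.5]; in the ordinary split case the argument must be carried out directly on the Koszul complex, using that the split homotopies constructed in the proof of Theorem 5.1 dualize to split homotopies for the adjoint complex. Once this duality is in place the proof is a purely formal substitution, and the Hilbert space equalities are automatic from the coincidence of the Słodkowski and split joint spectra in that setting.
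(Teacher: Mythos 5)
Your plan treats the theorem as an immediate corollary of Theorem 5.1: apply Theorem 5.1 to the pair $(\rho_1,\rho_2^*)$, then relabel the spectra of $\rho_2^*$ by the $\delta$--$\pi$ duality. That substitution step is fine, but the first step does not go through as stated, and this is where the real content of the theorem lies.

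Theorem 5.1 is proved under the hypothesis that $X_1\tilde{\otimes}X_2$ is a tensor product relative to the dual pairings $\langle X_1,X_1{'}\rangle$ and $\langle X_2,X_2{'}\rangle$, both of type (A); in its proof the well-definedness of the tensor-product homotopy $H_{p,q}=h_p\otimes I_q$ is justified precisely by ${\mathcal L}(X_1\otimes\wedge^p L_1,\,X_1\otimes\wedge^{p+1}L_1)={\rm L}(X_1\otimes\wedge^p L_1,\,X_1\otimes\wedge^{p+1}L_1)$, which holds only for type~(A) pairings. Here the operator ideal $J$ is a tensor product of $X_1$ and $X_2{'}$ relative to $\langle X_1,X_1{'}\rangle$ and $\langle X_2{'},X_2\rangle$, and the second pairing is of type (B), so that ${\mathcal L}(X_2{'})=\{T^*\colon T\in{\rm L}(X_2)\}\neq{\rm L}(X_2{'})$. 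Consequently the bilinear map ${\mathcal L}(X_1)\times{\mathcal L}(X_2{'})\to{\rm L}(J)$ only accepts adjoint operators in the second slot, and an arbitrary split homotopy $g_q$ for the Koszul complex of $\rho_2^*-\beta$ cannot be fed into $I_p\otimes g_q$ unless each of its matrix entries is an adjoint. The paper's proof therefore does not invoke Theorem 5.1 as a black box; it re-runs the homotopy construction of Theorem 5.1, and the crucial extra step is to observe, via the duality of the Koszul complex of $\rho_2$ and elementary properties of adjoints, that the homotopy $(g_s)$ for $(X_2{'}\otimes\wedge L_2^{op},d(\rho_2^*-\beta))$ can be chosen so that every component is an adjoint operator, and that Proposition 4.2 has an analogue for tensor products of the form $Y\tilde{\otimes}X{'}$ relative to $\langle Y,Y{'}\rangle$ and $\langle X{'},X\rangle$. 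You do mention "split homotopies... dualize to split homotopies for the adjoint complex," but you file it under establishing the equality $sp_{\delta,q}(\rho_2^*)=sp_{\pi,m-q}(\rho_2)-h_2$; its actual role is more fundamental: without it the tensor-product homotopy on $J$ is not even defined, and the entire right-hand inclusion collapses. So the gap is that you have not noticed Theorem 5.1 is inapplicable as stated in this setting and that its argument must be adapted to the type-(B) pairing, with the adjoint-valued homotopies being the device that makes the adaptation work.
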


\begin{proof}

\indent We begin with the first statement.\par

\indent We consider the complexes $(X_1\otimes \wedge L_1,
d(\rho_1))$ and $(X_2{'}\otimes \wedge L_2^{op},
d(\rho_2^*))$. Since the complex 
$(J\otimes \wedge L,d(\tilde{\rho}))$ is isomorphic to 
$(X_1\otimes \wedge L_1,
d(\rho_1))\tilde{\otimes}(X_2{'}\otimes \wedge L_2^{op},
d(\rho_2^*))$, we work with the latter.\par

\indent In addition, if we consider the differentiable spaces associated
to the Koszul complexes 
defined by the representations $\rho_1$ and $\rho_2^*$,
$({\mathcal X}_1,\partial_1)$ and $({\mathcal X}_2{'},\partial^*_2)$
respectively, since $\partial_1\in{\mathcal L}({\mathcal X}_1)$
and $\partial_2^*\in{\mathcal L}({\mathcal X }_2{'})$, we may
consider the tensor product of $({\mathcal X}_1,\partial_1)$ 
and $({\mathcal X}_2{'},\partial^*_2)$ relative to $\langle{\mathcal X}_1,
{\mathcal X}_1{'}\rangle$ and $\langle{\mathcal X}_2{'},{\mathcal X}_2\rangle$, 
$({\mathcal X}_1,\partial_1)\tilde{\otimes}({\mathcal X}{'}_2,
\partial_2^*)$, which has 
the boundary $\tilde{\partial}=
\partial_1\otimes I+\eta\otimes\partial_2^*$; see [14] or
section 4. However,  
$({\mathcal X}_1,\partial_1)\tilde{\otimes}({\mathcal X}{'}_2,
\partial_2^*)$ is the differentiable space associated to
the complex $(X_1\otimes \wedge L_1,
d(\rho_1))\tilde{\otimes}(X_2{'}\otimes \wedge L_2^{op},
d(\rho_2^*))$; see section 4 or [14].\par
   
\indent Now we consider $\alpha\in \sigma_{\delta , p}
(\rho_1)$ and $\beta\in \sigma_{\pi , m-q}(\rho_2)-h_2$,
$p+q=k$. Then, by the duality property of the S\l odkowski
joint spectra, [5, Theorem 7] and [21, Lemma 2.11.4],
$\beta\in\sigma_{\delta ,q}(\rho_2^*)$.
Now, if we consider the Koszul complexes associated to the
representations $\rho_1-\alpha\colon L_1\to {\rm L}(X_1)$
and $\rho_2^*-\beta\colon L_2^{op}\to {\rm L}(X_2{'})$,
the differentiable spaces associated to them, $({\mathcal X}_1,
\partial_1)$ and $({\mathcal X}_2,
\partial_2^*)$ respectively, and the tensor product $({\mathcal X}_1,\partial_1)\tilde{\otimes}({\mathcal X}{'}_2,
\partial_2^*)$, then we may apply [14, Theorem 2.2], and a similar argument 
to the one in Theorem 5.1 shows the inclusion on the left.\par  

 \indent The middle inclusion is clear.\par

\indent With regard to the inclusion on the right, we  
adapt the corresponding argument in Theorem 5.1 to the present situation.\par 

\indent We consider the complex $(X_2\otimes \wedge 
L_2, d(\rho_2))$. By the duality property of the Koszul complex 
associated to $\rho_2$ (see
[5, Theorem  1] and [21, Korollar  2.4.5]), if $\beta\notin 
(sp_{\pi , m-q}(\rho_2)-h_2)$, then $\beta\notin sp_{\delta ,q}
(\rho_2^*)$. In particular, if $(\alpha ,\beta)\notin\bigcup_{p+q=k}
sp_{\delta ,p}(\rho_1)\times (sp_{\pi , m-q}(\rho_2)-h_2)$,
then $(\alpha ,\beta)\notin \bigcup_{p+q=k}sp_{\delta ,p}
(\rho_1)\times sp_{\delta ,q}(\rho_2^*)$.\par

\indent In addition, by the duality property of the Koszul complex
of the representation $\rho_2$ and by elementary properties
of the adjoint of an operator, it is easy to prove that  if $\beta\notin
(sp_{\pi , m-t}(\rho_2)-h_2)$, then there is a homotopy 
for the complex $(X_2{'}\otimes L_2^{op},d(\rho_2^*-\beta))$,
$(g_{s})_{0\le s\le t}$, which satisfies the
preliminaries facts recalled before Proposition 4.2. Besides, if
for each $s=0,\ldots , t$ we think each map $g_s$ as a matrix of operators,
then each component of this matrix is an adjoint operator. 
\par

\indent Now, according to the properties of the axiomatic tensor
product introduced in [14], if there 
is a tensor product of a Banach space $Y$ and $X{'}$
relative to $\langle Y,Y{'}\rangle$ 
and $\langle X{'}, X\rangle$, $Y\tilde{\otimes} X{'}$,
then it is possible to prove similar results
to the ones in Proposition 4.2.
In particular, it is possible to adapt the proof in Theorem 5.1 to
the present case in order to prove the inclusion on the right.\par
 
\indent The second statement may be proved by a
similar argument.\par

\end{proof}

\indent Now we describe the essential S\l odkowski and the essential
split joint spectra of the multiplication representation $\tilde{\rho}$.\par

\begin{thm}  Let $L_1$ and $L_2$ be two complex solvable 
finite dimensional Lie algebras, $X_1$ and $X_2$ two complex Banach 
spaces, and $\rho_i \colon L_i\to {\rm L}(X)$, $i=1$, $2$, 
two representations of Lie algebras. 
We suppose that there is an operator ideal $J$ between $X_2$
and $X_1$ in the sense of 
Definition 7.1, and we present it as the 
tensor product of $X_1$ and $X_2{'}$ relative to 
to $\langle X_1,X_1{'}\rangle$ and $\langle X_2{'},X_2\rangle$. 
Then, if we consider the multiplication
representation $\tilde{\rho}\colon L_1\times L_2^{op}\to {\rm L}(J)$,
we have 

\begin{align*}
{\rm (i)}&\hbox{}\bigcup_{p+q=k}\sigma_{\delta , p,e}(\rho_1)\times 
(\sigma_{\pi ,m-q}(\rho_2)-h_2)\bigcup\bigcup_{p+q=k}\sigma_{\delta , p}
(\rho_1)\times (\sigma_{\pi , m-q ,e}(\rho_2)-h_2)\\
&\subseteq\sigma_{\delta ,k ,e}(\tilde{\rho})\subseteq  sp_{\delta , k,e} (\tilde{\rho})\subseteq \\
&\bigcup_{p+q=k}sp_{\delta ,p ,e}
(\rho_1)\times (sp_{\pi , m-q}(\rho_2)-h_2)\bigcup\bigcup_{p+q=k}
sp_{\delta , p}(\rho_1)\times (sp_{\pi , m-q ,e}(\rho_2)-h_2),\\\end{align*}

\begin{align*}
 {\rm (ii)} &\hbox{}\bigcup_{p+q=k}\sigma_{\pi , p, e}(\rho_1)\times 
(\sigma_{\delta ,m-q}(\rho_2)-h_2)\bigcup\bigcup_{p+q=k}
\sigma_{\pi ,p}(\rho_1)\times (\sigma_{\delta ,m-q ,e}(\rho_2)-h_2)\\
&\subseteq\sigma_{\pi , k ,e}(\tilde{\rho})
\subseteq sp_{\delta , k ,e} (\tilde{\rho})\subseteq\\
& \bigcup_{p+q=k}sp_{\pi ,p ,e}
(\rho_1)\times (sp_{\delta , m-q}(\rho_2)-h_2)\bigcup
\bigcup_{p+q=k}sp_{\pi , p}(\rho_1)\times (sp_{\delta ,m-q ,e}(\rho_2)
-h_2),\\\end{align*}

where $h_2$ is the character of $L_2$ considered in Theorem 3.4.\par
\indent In particular, if $X_1$ and $X_2$ are Hilbert spaces, the above 
inclusions are equalities.\par
\end{thm}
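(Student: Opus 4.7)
My plan is to mimic the strategy of Theorem 7.2, reducing the multiplication representation to a tensor product representation and then invoking Theorem 6.2 together with the duality results of Section 3. As explained just before Theorem 7.2, when $J$ is viewed as the tensor product of $X_1$ and $X_2{'}$ relative to $\langle X_1,X_1{'}\rangle$ and $\langle X_2{'},X_2\rangle$, the multiplication representation $\tilde{\rho}$ coincides with $\rho_1\times\rho_2^*\colon L_1\times L_2^{op}\to{\rm L}(X_1\tilde{\otimes}X_2{'})$, and the Koszul complex $(J\otimes\wedge L,d(\tilde{\rho}))$ is isomorphic to $(X_1\otimes\wedge L_1,d(\rho_1))\tilde{\otimes}(X_2{'}\otimes\wedge L_2^{op},d(\rho_2^*))$. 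So the task reduces to describing the essential S\l odkowski and split joint spectra of the tensor product representation $\rho_1\times\rho_2^*$.

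For the inclusion on the left of part (i), I would follow the argument in Theorem 6.2: given $\alpha\in\sigma_{\delta,p,e}(\rho_1)$ and $\beta\in\sigma_{\pi,m-q}(\rho_2)-h_2$ (the other symmetric case is analogous), the duality formula of Theorem 3.4(ii) gives $\beta\in\sigma_{\delta,q}(\rho_2^*)$; similarly Theorem 3.4 yields $\beta\in\sigma_{\delta,q,e}(\rho_2^*)$ when needed. Applying the K\"unneth-style argument of [14, Theorem 2.2] to the differential spaces associated to $\rho_1-\alpha$ and $\rho_2^*-\beta$ as in Theorem 6.2 produces a homology class in the right degree for the tensor product complex, which by Proposition 4.4 (adapted as in the discussion preceding Theorem 7.2) sits in $H_{p_1+q_2}$ of the Koszul complex of $\tilde{\rho}-(\alpha,\beta)$; since one of the two homology spaces is infinite dimensional, so is the tensor product's, giving $(\alpha,\beta)\in\sigma_{\delta,k,e}(\tilde{\rho})$. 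The middle inclusion is immediate from $\sigma_{\delta,k,e}\subseteq sp_{\delta,k,e}$.

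The inclusion on the right is the main obstacle and requires adapting the induction on $k$ developed in Theorem 6.2. I would take $(\alpha,\beta)$ outside the right-hand union; then by the duality of Theorem 3.4 applied to $\rho_2$, we have $\beta\notin sp_{\delta,q,e}(\rho_2^*)$ or $\beta\notin sp_{\delta,q}(\rho_2^*)$ in the appropriate ranges, and the homotopies plus finite-rank projectors that exist in $(X_2\otimes\wedge L_2,d(\rho_2-\beta+h_2))$ can be transferred to the dual complex $(X_2{'}\otimes\wedge L_2^{op},d(\rho_2^*-\beta))$ by taking adjoints (this is also implicit in the proof of Theorem 7.2). The essential point is that the resulting operators on $X_2{'}$ all belong to ${\mathcal L}(X_2{'})$ in the sense of Section 4, so that tensoring them with the identity on $X_1$ yields well-defined bounded operators on $X_1\tilde{\otimes}X_2{'}$. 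With these building blocks in place, the five-case construction of the homotopy $H_{p,q}=\bigoplus H_{p,q}$ from the proof of Theorem 6.2, combined with Proposition 4.2 (and Remark 4.3, to swap the order of the tensor factors where necessary) and Proposition 6.1 (to guarantee that the error term $k_p\otimes k_q{'}$ has finite-dimensional range), produces a Fredholm split in degree $k$ for $(X_1\otimes\wedge L_1,d(\rho_1-\alpha))\tilde{\otimes}(X_2{'}\otimes\wedge L_2^{op},d(\rho_2^*-\beta))$. By Proposition 4.4 this means $(\alpha,\beta)\notin sp_{\delta,k,e}(\tilde{\rho})$.

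Part (ii) follows from the same scheme with the roles of $\delta$ and $\pi$ interchanged, using Theorem 3.4(i) instead of (ii) for the duality step, and the second halves of Propositions 4.2 and 6.1. For the Hilbert space case, the inclusions $\sigma_{*,k,e}\subseteq sp_{*,k,e}$ become equalities (as noted after Definition 2.4 and in the analogous passages of Sections 5 and 6), so the two chains of inclusions collapse and all sets coincide. The main technical obstacle, as in Theorem 6.2, is the bookkeeping of the five cases in the inductive construction of the homotopy, now complicated by the fact that the second tensor factor lives in the dual space $X_2{'}$ and carries the opposite-algebra representation $\rho_2^*$.
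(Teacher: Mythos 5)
Your proposal is correct and follows the paper's intended route: you unwind the paper's one-line proof ``Adapt the proof of Theorem 6.2'' by first performing the reduction (set up in the preamble to Theorem 7.2) of $\tilde{\rho}$ to the tensor product representation $\rho_1\times\rho_2^*$ on $X_1\tilde{\otimes}X_2{'}$, translating $\pi$-spectra of $\rho_2$ into $\delta$-spectra of $\rho_2^*$ via the duality of Theorem 3.4 (and its non-essential analogue used in Theorem 7.2), and then replaying the Fredholm-homotopy machinery of Theorem 6.2 with Propositions 4.2 and 6.1 and [14, Theorem 2.2], observing as the paper does in Theorem 7.2 that the transferred homotopies and finite-rank projectors are adjoints and hence lie in ${\mathcal L}(X_2{'})$. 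This is exactly the adaptation the paper's proof is asking for.
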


\begin{proof}
 
\indent Adapt the 
proof of Theorem 6.2.\par
 
\end{proof}

\indent As in section 5 and 6 we consider nilpotent systems of 
operators, and we obtain variants of Theorems 7.2 and 7.3
for this case. \par

\begin{thm} Let $X_1$ and $X_2$ be two complex Banach
spaces, and 
$a=(a_1,\ldots ,a_n)$ and $b=(b_1,\ldots ,b_m)$ two 
tuples of operators, $a_i\in {\rm L}(X_1)$, $1\le i\le n$, and 
$b_j\in {\rm L}(X_2)$, $1\le j\le m$, such that the vector 
subspace generated by them, $\langle a_i\rangle_{1\le i\le n}$ and
$\langle b_j\rangle_{1\le j\le m}$, are nilpotent Lie subalgebras
of ${\rm L}(X_1)$ and ${\rm L}(X_2)$ respectively.
We consider $J\subseteq {\rm L}(X_2,X_1)$ an
operator ideal between $X_2$ and $X_1$ in the sense of Definition 7.1, 
and the $(n+m)$-tuple of operators
defined in ${\rm L}(J)$, $c=(L_{a_1},
\ldots , L_{a_n},R_{b_1},\ldots ,R_{b_m}) $,
where if $S\in {\rm L }(X_1)$ and if $T\in{\rm L}(X_2)$,
the maps $L_S$, $R_T \colon J\to J$ are defined by
$$
L_S(U)=SU,\hskip1cm R_T (U)= UT.
$$
Then, we have
$$
{\rm (i)} \hbox{}\bigcup_{p+q=k}\sigma_{\delta , p}(a)\times 
\sigma_{\pi , m-q}(b)\subseteq\sigma_{\delta ,k}(c)
\subseteq sp_{\delta , k} (c)\subseteq \bigcup_{p+q=k}sp_{\delta ,p}
(a)\times sp_{\pi ,m-q}(b),
$$
$$
 {\rm (ii)}\hbox{}\bigcup_{p+q=k}\sigma_{\pi ,p}(a)\times 
\sigma_{\delta , m-q}(b)\subseteq
\sigma_{\pi , k}(c)
\subseteq sp_{\pi , k} (c)\subseteq \bigcup_{p+q=k}sp_{\pi , p}
(a)\times sp_{\delta , m-q}(b).
$$
\end{thm}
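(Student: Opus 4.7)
The plan is to reduce Theorem 7.4 to Theorem 7.2 by the same device used in the proof of Theorem 5.2, but now combined with the observation that the character $h_2$ of Theorem 3.4 vanishes in the nilpotent case. First I would set $L_1=\langle a_i\rangle_{1\le i\le n}\subseteq{\rm L}(X_1)$ and $L_2=\langle b_j\rangle_{1\le j\le m}\subseteq{\rm L}(X_2)$; by hypothesis these are nilpotent Lie subalgebras. Let $\iota_1\colon L_1\to{\rm L}(X_1)$ and $\iota_2\colon L_2\to{\rm L}(X_2)$ be the inclusion representations, and consider the multiplication representation $\tilde{\iota}\colon L_1\times L_2^{op}\to{\rm L}(J)$, $\tilde{\iota}(l_1,l_2)(T)=l_1T+Tl_2$.

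Next I would apply Theorem 7.2 to $\tilde{\iota}$, which yields the desired inclusions for $\sigma_{\delta,k}(\tilde{\iota})$ and $sp_{\delta,k}(\tilde{\iota})$ (and their $\pi$-counterparts) in terms of $\sigma_{\ast,p}(\iota_1)$ and $\sigma_{\ast,m-q}(\iota_2)-h_2$. The crucial point is that since $L_2$ is nilpotent, every $\operatorname{ad}(l_2)$ is a nilpotent endomorphism, so the character $h_2$ of $L_2$ supplied by Theorem 3.4 (which, by [5, Theorem 1] and [21, Korollar 2.4.5], reduces to a trace of $\operatorname{ad}$) is identically zero. Hence the $-h_2$ shift disappears and the bounds read purely in terms of $\sigma_{\pi,m-q}(\iota_2)$ or $sp_{\pi,m-q}(\iota_2)$.

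Then I would introduce the nilpotent Lie subalgebra $L=\langle L_{a_i},R_{b_j}\rangle\subseteq{\rm L}(J)$ and check, using $[L_{a_i},L_{a_j}]=L_{[a_i,a_j]}$, $[R_{b_i},R_{b_j}]=R_{[b_i,b_j]^{op}}$ and $[L_{a_i},R_{b_j}]=0$, that the map
\[
f\colon L_1\times L_2^{op}\to L,\qquad f(l_1,l_2)=L_{l_1}+R_{l_2},
\]
is a well-defined Lie algebra epimorphism, and that $\tilde{\iota}=\iota_L\circ f$ with $\iota_L\colon L\to{\rm L}(J)$ the inclusion. Proposition 3.11 then gives $\sigma_{\ast,k}(\tilde{\iota})=\sigma_{\ast,k}(\iota_L)\circ f$ and $sp_{\ast,k}(\tilde{\iota})=sp_{\ast,k}(\iota_L)\circ f$, which translates, by the definition of $\sigma_{\ast,k}(c)$ and $sp_{\ast,k}(c)$ given before Theorem 5.2, into joint spectra of the tuple $c$ evaluated at its components. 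Combining with step two yields the four inclusions stated in (i) and (ii).

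The main obstacle I anticipate is bookkeeping rather than conceptual: one has to justify carefully that the identifications
\[
\sigma_{\delta,p}(a)\times\sigma_{\pi,m-q}(b)
=\bigl\{(\alpha(a_1),\ldots,\alpha(a_n),\beta(b_1),\ldots,\beta(b_m)):(\alpha,\beta)\in\sigma_{\delta,p}(\iota_1)\times\sigma_{\pi,m-q}(\iota_2)\bigr\}
\]
and the analogous ones for $sp_*$ pull through the epimorphism $f$ in exactly the shape produced by Theorem 7.2, and that the vanishing of $h_2$ is not obstructed by the passage to $L_2^{op}$ (which is of course also nilpotent). Once these are settled, the proof reduces to the routine argument used in Theorem 5.2.
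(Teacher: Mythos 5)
Your proposal is correct and follows the same route as the paper: reduce to Theorem 7.2 via the inclusion representations $\iota_1,\iota_2$ and the multiplication representation, then pass through the nilpotent epimorphism $f\colon L_1\times L_2^{op}\to L$ and Proposition 3.11 exactly as in the proof of Theorem 5.2. Your explicit remark that the character $h_2$ vanishes because $L_2$ (and hence $L_2^{op}$) is nilpotent is a welcome addition — the paper's statement of Theorem 7.4 omits $h_2$ without comment, and this observation is in fact needed to reconcile the tuple form of Theorem 7.4 with the $-h_2$ shift appearing in Theorem 7.2.
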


\begin{proof}

\indent As in Theorem 5.2, we consider the Lie algebras
$L_1=\langle a_i\rangle_{1\le i\le n}$ and $L_2=\langle b_j\rangle_{1\le j\le m}$,
the representations of the above algebras defined by the 
inclusion, i.e., $\iota_1\colon L_1\to {\rm L}(X_1)$
and $\iota_2\colon L_2\to {\rm L}(X_2)$, and
the representation $\iota =\iota_1\times\iota_2^*
\colon L_1\times L_2^{op}\to {\rm L}(X_1\tilde{\otimes}
X_2{'})$. Then, if $J$ is viewed as a tensor product of 
$X_1$ and $X_2$ relative to $\langle X_1,X_1{'}\rangle$ and 
$\langle X_2{'},X_2\rangle$,
$\iota$ coincides with the representation $\rho\colon L_1\times 
L_2^{op}\to {\rm L}(J)$, $\rho (A,B)(T)=AT+TB$.\par

\indent Now, the argument in Theorem 5.2 may be adapted to the 
present situation using Proposition 3.11 and Theorem
7.2 instead of Theorem 5.1.\par

\end{proof} 
\begin{thm} In the conditions of Theorem 7.4

\begin{align*}
{\rm (i)} &\hbox{}\bigcup_{p+q=k}\sigma_{\delta , p, e}(a)\times 
\sigma_{\pi ,m-q}(b)\bigcup\bigcup_{p+q=k}\sigma_{\delta ,p}(a)
\times\sigma_{\pi ,m-q , e}(b)
\subseteq\sigma_{\delta ,k ,e}(c)
\subseteq \\
& sp_{\delta , k, e} (c)\subseteq \bigcup_{p+q=k}sp_{\delta ,p ,e}
(a)\times sp_{\pi , m-q}(b)\bigcup\bigcup_{p+q=k}sp_{\delta , p}(a)
\times sp_{\pi ,m-q ,e}(b),\\\end{align*}

\begin{align*}
 {\rm (ii)}&\hbox{}\bigcup_{p+q=k}\sigma_{\pi ,p , e}(a)\times 
\sigma_{\delta ,m-q}(b)\bigcup \bigcup_{p+q=k}
\sigma_{\pi ,p}(a)\times \sigma_{\delta ,m-q ,e}(b)\subseteq
\sigma_{\pi ,k ,e}(c)\subseteq \\
& sp_{\pi ,k ,e} (c)\subseteq \bigcup_{p+q=k}sp_{\pi ,p ,e}
(a)\times sp_{\delta ,m-q}(b)\bigcup\bigcup_{p+q=k}sp_{\pi ,p}(a)
\times sp_{\delta ,m-q ,e}(b).\\\end{align*}

\end{thm}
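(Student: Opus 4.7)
The plan is to mirror the argument used in Theorem 7.4, using Theorem 7.3 in place of Theorem 7.2. Specifically, introduce the nilpotent Lie subalgebras $L_1 = \langle a_i \rangle_{1 \le i \le n}$ and $L_2 = \langle b_j \rangle_{1 \le j \le m}$ of ${\rm L}(X_1)$ and ${\rm L}(X_2)$ respectively, together with the inclusion representations $\iota_1 \colon L_1 \to {\rm L}(X_1)$ and $\iota_2 \colon L_2 \to {\rm L}(X_2)$. Viewing $J$ as a tensor product of $X_1$ and $X_2{'}$ relative to $\langle X_1, X_1{'} \rangle$ and $\langle X_2{'}, X_2 \rangle$, form the multiplication representation
$$
\iota = \iota_1 \times \iota_2^* \colon L_1 \times L_2^{op} \to {\rm L}(J),
$$
which coincides with the representation $(A,B) \mapsto L_A + R_B$.

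Next, apply Theorem 7.3 to $\iota$. Since $L_2$ is nilpotent, its adjoint representation is nilpotent, so the character $h_2$ of $L_2$ appearing in Theorem 3.4 vanishes identically; this removes the $-h_2$ shifts from the conclusion of Theorem 7.3. One thus obtains the desired inclusions for $\sigma_{\delta ,k ,e}(\iota)$, $sp_{\delta ,k ,e}(\iota)$ and their $\pi$-analogues, all expressed in terms of the (essential) S\l odkowski and split joint spectra of $\iota_1$ and $\iota_2$.

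To pass from the representation $\iota$ to the tuple $c$, let $L \subseteq {\rm L}(J)$ denote the nilpotent Lie subalgebra generated by the components of $c$, and let $\iota_L \colon L \to {\rm L}(J)$ be the inclusion representation. There is a Lie epimorphism $h \colon L_1 \times L_2^{op} \to L$ determined by $a_i \mapsto L_{a_i}$ and $b_j \mapsto R_{b_j}$, such that $\iota = \iota_L \circ h$. By Proposition 3.11 (ii) and (iv), the essential S\l odkowski and essential split joint spectra of $\iota$ equal the corresponding spectra of $\iota_L$ composed with $h$. Evaluating characters on the generators $a_i$ and $b_j$ then converts the Lie algebra statements into tuple statements, yielding the inclusions claimed for $c$. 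The second assertion follows by a similar argument.

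The main obstacle I anticipate is verifying cleanly that the character $h_2$ truly disappears in the nilpotent tuple setting, so that no residual shift survives in the final formulation; a secondary subtlety is that one must check that $L_1 \times L_2^{op}$ is nilpotent (which follows since both factors are nilpotent, as the opposite-algebra construction preserves the descending central series), so that Proposition 3.11 applies. Everything else is a direct translation of the strategy of Theorem 7.4.
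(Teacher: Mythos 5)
Your proposal is correct and follows essentially the same route as the paper: the paper's own proof is a one-line instruction to adapt the argument of Theorem 6.3 (itself a downstream adaptation of Theorem 5.2), invoking Proposition 3.11 together with Theorem 7.3 in place of Theorem 7.2, which is exactly the strategy you lay out. You supply one piece of justification that the paper leaves implicit, namely that the character $h_2$ vanishes because $L_2 = \langle b_j \rangle$ is nilpotent, so $\mathrm{ad}(l)$ is nilpotent and hence traceless for every $l \in L_2$; this is why Theorems 7.4 and 7.5 carry no $-h_2$ shift even though Theorems 7.2 and 7.3 do, and the same silent cancellation already occurs in passing from Theorem 7.2 to Theorem 7.4. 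Your secondary remark that $L_1 \times L_2^{op}$ is nilpotent (since $(L^{op})^k = L^k$ under the sign-flipped bracket, and a direct sum of nilpotent algebras is nilpotent) is also correct and is the hypothesis needed for Proposition 3.11 to apply to the epimorphism $h \colon L_1 \times L_2^{op} \to L$.
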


\begin{proof}

\indent Adapt the argument 
in Theorem 6.3, using Proposition 3.11 and Theorem 7.3 instead
of Theorem 6.2.\par

\end{proof}

\indent We observe that similar remarks to the ones in section 5 and
6 may be made for the theorems in this section. In particular,
Theorem 7.2 and 7.4 are extensions of [21, Korollar  3.6.10] and [21, Satz 3.7.4] 
respectively, for the tensor product
introduced in [14].
In addition, Theorem 7.3 and 7.5 extend [14, Theorem 3.1] and [14, Theorem 3.2]
respectively for the essential joint spectra.\par

\section{Applications}

\indent In this section we apply the results that we obtained in sections 5, 6, and 7 to
particular representations of nilpotent Lie algebras.  
\par

\indent We consider two complex Banach spaces $X_1$ and $X_2$,
a complex nilpotent finite dimensional Lie algebra $L$,
and two representations of $L$, $\rho_1\colon L\to {\rm L}(X_1)$
and $\rho_2\colon L\to{\rm L}(X_2)$. We suppose that 
there is a tensor product of $X_1$ and $X_2$ relative
to $\langle X_1, X_1{'}\rangle$ and $\langle X_2,X_2{'}\rangle$, 
$X_1\tilde{\otimes}X_2$. Thus, we may
consider the tensor product representation 
$$
\rho=\rho_1\times\rho_2\colon L\times L\to {\rm L}(X_1\tilde{\otimes}X_2),\hskip1cm
\rho=\rho_1\otimes I+I\otimes \rho_2.
$$

\indent Now we consider the diagonal map
$$
\Delta \colon L\to L\times L,\hskip2cm \Delta (l)=(l,l),
$$  
and we identify $L$ with $\Delta (L)$. In addition, 
we may consider the representation
$$
\theta=\rho\circ\Delta\colon L\to{\rm L}(X_1\tilde{\otimes}X_2),\hskip1cm \theta (l)=\rho_1 (l)\otimes I
+I\otimes \rho_2 (l). 
$$

\indent In the following theorem we describe the S\l odkowski, the split, 
the essential S\l odkowski and the essential split joint spectra of the representation $\theta$.\par

\begin{thm}  Let $L$ be a complex nilpotent 
finite dimensional Lie algebras, $X_1$ and $X_2$ two complex Banach 
spaces, and $\rho_i \colon L\to {\rm L}(X_i)$, $i=1$, $2$,
two representations of the Lie algebra $L$. 
We suppose that there is a tensor product of $X_1$ and $X_2$ relative to 
$\langle X_1,X_1{'}\rangle$ and $\langle X_2,X_2{'}\rangle$, $X_1\tilde{\otimes} 
X_2$. Then, if we consider the representation $\theta\colon L
\to {\rm L}(X_1\tilde{\otimes} X_2)$,
we have 
$$
{\rm (i)} \hbox{}\bigcup_{p+q=k}(\sigma_{\delta , p}(\rho_1)+ 
\sigma_{\delta ,q}(\rho_2))\subseteq\sigma_{\delta ,k}(\theta)
\subseteq sp_{\delta ,k} (\theta)\subseteq \bigcup_{p+q=k}(sp_{\delta ,p}
(\rho_1)+ sp_{\delta ,q}(\rho_2)),
$$
$$
 {\rm (ii)}\hbox{}\bigcup_{p+q=k}(\sigma_{\pi, p}(\rho_1)+ 
\sigma_{\pi ,q}(\rho_2))\subseteq\sigma_{\pi , k}(\theta)
\subseteq sp_{\pi ,k} (\theta)\subseteq \bigcup_{p+q=k}(sp_{\pi , p}
(\rho_1)+ sp_{\pi ,q}(\rho_2)),
$$

\begin{align*}
 {\rm (iii)} \hbox{}&\bigcup_{p+q=k}(\sigma_{\delta , p ,e}(\rho_1)+ 
\sigma_{\delta ,q}(\rho_2 ))\bigcup\bigcup_{p+q=k}(\sigma_{\delta ,p}(\rho_1)+
\sigma_{\delta , q ,e}(\rho_2))
\subseteq\sigma_{\delta ,k ,e}(\theta)
\subseteq \\
&sp_{\delta ,k ,e} (\theta)\subseteq \bigcup_{p+q=k}(sp_{\delta ,p, e}
(\rho_1)+ sp_{\delta ,q}(\rho_2))\bigcup\bigcup_{p+q=k}
(sp_{\delta ,p}(\rho_1)
+ sp_{\delta ,q , e}(\rho_2)),\\\end{align*}

\begin{align*}
 {\rm (iv)} \hbox{}&\bigcup_{p+q=k}(\sigma_{\pi ,p ,e}(\rho_1)+ 
\sigma_{\pi ,q}(\rho_2 ))\bigcup\bigcup_{p+q=k}(\sigma_{\pi ,p}(\rho_1)
+\sigma_{\pi ,q ,e}(\rho_2))
\subseteq\sigma_{\pi ,k ,e}(\theta)\subseteq \\
&sp_{\pi ,k , e} (\theta)\subseteq \bigcup_{p+q=k}(sp_{\pi ,p ,e}
(\rho_1) + sp_{\pi ,q}(\rho_2))\bigcup\bigcup_{p+q=k}(sp_{\pi ,p}
(\rho_1)+ sp_{\pi ,q ,e}(\rho_2)).\\\end{align*}

In particular, if $X_1$ and $X_2$ are Hilbert spaces, the above 
inclusions are equalities.\par
\end{thm}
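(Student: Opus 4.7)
The plan is to realize $\theta$ as the restriction of $\rho$ to a subalgebra of $L\times L$ and then apply the projection property together with Theorems 5.1 and 6.2. Concretely, since $L$ is nilpotent, $L\times L$ is nilpotent as well, and $\Delta(L)$ is a (nilpotent) subalgebra of $L\times L$. Under the identification $l\mapsto(l,l)$ of $L$ with $\Delta(L)$, the representation $\theta\colon L\to {\rm L}(X_1\tilde{\otimes}X_2)$ coincides with the restriction $\rho\!\mid\!\Delta(L)$.

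The next step is to describe the restriction map $\pi\colon(L\times L)^{*}\to\Delta(L)^{*}$ explicitly. Using the identification $(L\times L)^{*}\cong L^{*}\times L^{*}$ of Section 5, a pair $(f_{1},f_{2})\in L^{*}\times L^{*}$ evaluated on $(l,l)$ gives $f_{1}(l)+f_{2}(l)$, so $\pi$ is the sum map $(f_{1},f_{2})\mapsto f_{1}+f_{2}$ once $\Delta(L)^{*}$ is identified with $L^{*}$. I would then invoke the projection property for subalgebras of nilpotent Lie algebras, as established for each of the joint spectra in question: Theorem 3.2 for $\sigma_{\delta,k,e}$, Theorem 3.5 for $\sigma_{\pi,k,e}$, Theorem 3.10 for the essential split spectra, and the analogous (already known) statements for $\sigma_{\delta,k}$, $\sigma_{\pi,k}$, $sp_{\delta,k}$, $sp_{\pi,k}$ recorded in Section 2 and Theorem 2 there. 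This yields, for every joint spectrum $\tau$ in the list,
$$\tau(\theta)=\pi\bigl(\tau(\rho)\bigr)=\{f_{1}+f_{2}:(f_{1},f_{2})\in\tau(\rho)\}.$$

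Now I would feed the inclusions of Theorem 5.1 and Theorem 6.2 through $\pi$. Since $\pi$ sends a cartesian product $A\times B\subseteq L^{*}\times L^{*}$ to the Minkowski sum $A+B\subseteq L^{*}$, the four chains of inclusions in the statement of Theorem 8.1 follow directly from the corresponding chains for $\rho=\rho_{1}\times\rho_{2}$. The middle inclusions $\sigma_{*,k}(\theta)\subseteq sp_{*,k}(\theta)$ and their essential versions are already noted in Sections 2 and 3. The Hilbert-space addendum is immediate: in that setting the inclusions in Theorems 5.1 and 6.2 are equalities, so applying $\pi$ preserves equality throughout.

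I do not anticipate a serious obstacle; the argument is a clean combination of the projection property (Theorems 3.2, 3.5, 3.10 and their classical analogues) with the tensor-product descriptions of Sections 5 and 6. The only point requiring a little care is verifying that $\pi$, under the natural identifications, really is the addition map $L^{*}\times L^{*}\to L^{*}$ and that the Minkowski sum $A+B$ appearing on both sides of the chains is the correct image of the cartesian products $A\times B$ from Theorems 5.1 and 6.2; this is a direct computation from the definition of $\Delta$ and the splitting $(L\times L)^{*}\cong L^{*}\times L^{*}$.
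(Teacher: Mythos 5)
Your proposal is correct and follows essentially the same route as the paper's proof: realize $\theta$ as $\rho\mid\Delta(L)$ composed with the isomorphism $\Delta$, invoke Proposition 3.11 and the projection property for subalgebras of a nilpotent Lie algebra (Theorems 2.4, 3.2, 3.5, 3.10), and push the chains of Theorems 5.1 and 6.2 through the sum map $(f_1,f_2)\mapsto f_1+f_2$. The only cosmetic difference is that the paper explicitly cites Proposition 3.11 where you absorb it into "under the identification," but the content is identical.
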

\begin{proof}

\indent In order to prove the first statement we recall that according to Theorem 5.1 we have
$$
\bigcup_{p+q=k}\sigma_{\delta ,p}(\rho_1)\times 
\sigma_{\delta ,q}(\rho_2 )
\subseteq\sigma_{\delta ,k }(\rho)
\subseteq 
sp_{\delta ,k} (\rho)\subseteq \bigcup_{p+q=k}sp_{\delta , p}
(\rho_1)\times sp_{\delta ,q}(\rho_2).
$$
\indent Now, the map $\Delta\colon L\to L\times L$ is an identification
between $L$ and $\Delta (L)$, which is a subalgebra of the nilpotent Lie
algebra $L\times L$. Then, if we consider the representation
$\rho\mid\Delta (L)\colon \Delta (L)\to {\rm L}(X_1\tilde
{\otimes}X_2)$, since $\theta=\rho\mid\Delta (L)\circ 
\Delta$, according to Proposition 3.11 we have  
$$
\sigma_{\delta , k}(\theta)=\sigma_{\delta , k}(\rho\mid \Delta(L))
\circ \Delta=\{\alpha\circ \Delta:\hbox{ }\alpha\in \sigma_{\delta , k}
(\rho\mid \Delta(L))\},
$$
and 
$$
sp_{\delta , k}(\theta)=sp_{\delta , k}(\rho\mid \Delta(L))
\circ \Delta=\{\alpha\circ \Delta:\hbox{ }\alpha\in sp_{\delta , k}
(\rho\mid \Delta(L))\}.
$$
\indent In addition, since $\Delta (L)$
is a subalgebra of the nilpotent Lie algebra $L\times L$, by
the projection property for the S\l odkowski and the split joint
spectra, [21, Satz 2.11.5], [21, Satz 3.1.5] and Theorem 2.4, we have 
$$
\pi (\sigma_{\delta ,k}(\rho))=\sigma_{\delta ,k}(\rho\mid \Delta (L)),
\hskip1cm \pi (sp_{\delta ,k}(\rho))=sp_{\delta ,k}(\rho\mid \Delta (L)),
$$
where $\pi\colon (L\times L)^*\to \Delta (L)^*$ denotes the
restrictiton map.\par 

\indent However, it is easy to prove that 
$$
\pi(\bigcup_{p+q=k}\sigma_{\delta ,p}(\rho_1)\times 
\sigma_{\delta ,q}(\rho_2 ))\circ \Delta=
\bigcup_{p+q=k}(\sigma_{\delta ,p}(\rho_1)+ \sigma_{\delta , q}(\rho_2 )),
$$
and that
$$
\pi (\bigcup_{p+q=k}sp_{\delta , p}(\rho_1)\times sp_{\delta ,q}(\rho_2))\circ \Delta=
\bigcup_{p+q=k}(sp_{\delta ,p}(\rho_1)+ sp_{\delta , q}(\rho_2)).
$$

\indent Thus, we proved the first part of the theorem.\par

\indent The other statements may be proved by similar 
arguments, using for (ii) Theorem 5.1 and the projection property for the
S\l odkowki and the split joint spectra, and for (iii) and (iv) Theorem 6.2
and the projection property for the essential S\l odkowski and the
essential split joint spectra, Theorems
3.2, 3.5 and 3.10.\par

\end{proof}

\indent Now we consider two complex Banach spaces $X_1$ and $X_2$, an
operator ideal between $X_2$ and $X_1$ in the sense of [14],
a complex nilpotent Lie algebra $L$, two representations of $L$, $\rho_1\colon L\to{\rm L}(X_1)$
and  $\rho_2\colon L\to{\rm L}(X_2)$, and the 
representation of $L^{op}$,
$\nu =-\rho_2\colon L^{op}\to {\rm L}(X_2)$. 
As in section 7, we may consider the 
multiplication representation
$$
\tilde{\rho}\colon L\times L\to{\rm L}(J),\hskip 1cm 
\tilde{\rho} (l_1,l_2)(T)= \rho_1(l_1) T-T\rho_2(l_2).
$$ 
\indent As above, we may consider the representation 
$$
\tilde{\theta}=\tilde{\rho}\circ \Delta\colon L\to{\rm L}(J).
$$

\indent In the following theorem we describe the S\l odkowski, 
the split, the essential S\l odkowski
and the essential split joint spectra of the representation 
$\tilde{\theta}\colon L\to{\rm L}(J)$.\par

\begin{thm}  Let $L$ be a complex nilpotent 
finite dimensional Lie algebra, $X_1$ and $X_2$ two complex Banach 
spaces, and $\rho_i \colon L\to {\rm L}(X_i)$, $i=1$, $2$,
two representations of the Lie algebra $L$. 
We suppose that there is an operator ideal $J$ between $X_2$
and $X_1$ in the sense of 
Definition 7.1. Then, if we consider the 
representation $\tilde{\theta}\colon L\to {\rm L}(J)$,
we have 

\begin{align*}
{\rm (i)} &\hbox{}\bigcup_{p+q=k}(\sigma_{\delta , p}(\rho_1)- 
\sigma_{\pi ,m-q}(\rho_2)+h_2)\subseteq\sigma_{\delta ,k}
(\tilde{\theta})
\subseteq sp_{\delta ,k} (\tilde{\theta})\subseteq\\
&\bigcup_{p+q=k}(sp_{\delta , p}
(\rho_1)- sp_{\pi ,m-q}(\rho_2)+h_2),\\\end{align*}

\begin{align*}
 {\rm (ii)}&\hbox{}\bigcup_{p+q=k}(\sigma_{\pi ,p}(\rho_1)- 
\sigma_{\delta ,m-q}(\rho_2)+h_2)\subseteq
\sigma_{\pi ,k}(\tilde{\theta})
\subseteq sp_{\pi ,k} (\tilde{\theta})\subseteq\\
& \bigcup_{p+q=k}(sp_{\pi ,p}
(\rho_1)- sp_{\delta ,m-q}(\rho_2)+h_2),\\\end{align*}

\begin{align*}
 {\rm (iii)} &\hbox{}\bigcup_{p+q=k}(\sigma_{\delta, p ,e}(\rho_1)- 
\sigma_{\pi ,m-q}(\rho_2)+h_2)\bigcup\bigcup_{p+q=k}(\sigma_{\delta ,p}
(\rho_1)-\sigma_{\pi ,m-q ,e}(\rho_2)+h_2)\\
&\subseteq\sigma_{\delta ,k ,e}(\tilde{\theta})\subseteq  
sp_{\delta ,k ,e} (\tilde{\theta})\subseteq \\
&\bigcup_{p+q=k}(sp_{\delta ,p ,e}
(\rho_1)-sp_{\pi ,m-q}(\rho_2)+h_2)\bigcup\bigcup_{p+q=k}
(sp_{\delta , p}(\rho_1)- sp_{\pi , m-q, e}(\rho_2)+h_2),\\\end{align*}

\begin{align*}
 {\rm (iv)} &\hbox{}\bigcup_{p+q=k}(\sigma_{\pi , p,e}(\rho_1)- 
\sigma_{\delta , m-q}(\rho_2)+h_2)\bigcup\bigcup_{p+q=k}
(\sigma_{\pi ,p}(\rho_1)- \sigma_{\delta ,m-q ,e}(\rho_2)+h_2)\\
&\subseteq\sigma_{\pi ,k ,e}(\tilde{\theta})
\subseteq sp_{\delta ,k ,e} (\tilde{\theta})\subseteq\\
&\bigcup_{p+q=k}(sp_{\pi ,p ,e}
(\rho_1)- sp_{\delta ,m-q}(\rho_2)+h_2)\bigcup
\bigcup_{p+q=k}(sp_{\pi ,p}(\rho_1)- sp_{\delta ,m-q ,e}(\rho_2)
+h_2),\\\end{align*}

where $h_2$ is the character of $L_2$ considered in Theorem 3.4.\par
\indent In particular, if $X_1$ and $X_2$ are Hilbert spaces, the above 
inclusions are equalities.\par
\end{thm}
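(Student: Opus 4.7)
The plan is to mirror the proof of Theorem 8.1, with the multiplication representation of section~7 playing the role of the tensor product representation of section~5, and with careful bookkeeping of the minus sign built into $\tilde{\theta}(l)(T)=\rho_1(l)T-T\rho_2(l)$. First I would connect $\tilde{\rho}$ to the setting of section~7 as follows. Let $\tilde{\rho}_{7}\colon L\times L^{op}\to {\rm L}(J)$ be the section~7 multiplication representation associated to $(\rho_1,\rho_2)$, namely $\tilde{\rho}_{7}(l_1,l_2)(T)=\rho_1(l_1)T+T\rho_2(l_2)$. The vector space map $\phi\colon L\times L\to L\times L^{op}$, $\phi(l_1,l_2)=(l_1,-l_2)$, is a Lie algebra isomorphism (a direct bracket computation), and one checks that $\tilde{\rho}=\tilde{\rho}_{7}\circ\phi$. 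Hence $\tilde{\theta}=\tilde{\rho}_{7}\circ(\phi\circ\Delta)$, where $\phi\circ\Delta\colon L\to L\times L^{op}$, $l\mapsto (l,-l)$, is a Lie algebra monomorphism whose image is a subalgebra of the nilpotent Lie algebra $L\times L^{op}$.

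Next I would apply Theorems 7.2 and 7.3 to $\tilde{\rho}_{7}$ to obtain, for each of the six joint spectra at stake, the inclusions
$$\bigcup_{p+q=k}\sigma_{\delta,p}(\rho_1)\times(\sigma_{\pi,m-q}(\rho_2)-h_2)\subseteq \sigma_{\delta,k}(\tilde{\rho}_{7})\subseteq sp_{\delta,k}(\tilde{\rho}_{7})\subseteq \bigcup_{p+q=k}sp_{\delta,p}(\rho_1)\times(sp_{\pi,m-q}(\rho_2)-h_2),$$
together with the three analogous chains of inclusions for the $\pi$-S{\l}odkowski, essential S{\l}odkowski and essential split joint spectra, and the corresponding Hilbert-space equalities. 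Invoking Proposition~3.11 for the composition with $\phi\circ\Delta$, each such inclusion transports to an inclusion in which $\tilde{\rho}_{7}$ is replaced by $\tilde{\theta}$, modulo the operation $(\alpha,\beta)\mapsto(\alpha,\beta)\circ(\phi\circ\Delta)$ on characters.

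To identify the pulled-back spectra, I would use the projection property provided by Theorems 2.4, 3.2, 3.5 and 3.10, which is applicable because $\phi\circ\Delta(L)$ is a subalgebra of the nilpotent $L\times L^{op}$. Under the canonical identification $\phi\circ\Delta(L)\cong L$ via $l\mapsto (l,-l)$, the restriction-pullback map $(L\times L^{op})^{*}\to L^{*}$ sends $(\alpha,\beta)$ to $\alpha-\beta$. Consequently every Cartesian product of the form $A\times(B-h_2)$ that appears as a factor in the section~7 inclusions becomes the Minkowski expression $A-B+h_2$, which is precisely the shape of every factor in the four statements of Theorem~8.2.

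The step I expect to require the most care is the combinatorial bookkeeping needed to verify that each of the four chains of inclusions in Theorem~8.2 really does match, term by term, the corresponding chain in Theorems~7.2 and 7.3 after the substitution $(\alpha,\beta)\mapsto\alpha-\beta$: one must simultaneously respect the position of $h_2$ (which depends only on the Lie structure of $L$ and is therefore unaffected by passing through $\phi$), the swap between $\sigma_{\delta}$ and $\sigma_{\pi}$ dictated by the duality of Theorem~3.4, and the symmetric splitting of the essential spectra into $e$-marked and unmarked factors. Once this has been checked for one of the chains, the remaining three follow by the identical template, and the Hilbert-space equalities in Theorem~8.2 drop out immediately from those in Theorems~7.2 and 7.3, since both Proposition~3.11 and the projection property preserve equalities.
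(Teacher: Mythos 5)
Your proposal is correct and follows the same template the paper uses for Theorem 8.1 and cites as the model for Theorem 8.2: reduce to Theorems 7.2 and 7.3, then transport via Proposition 3.11 and the projection property through a diagonal-type subalgebra of a nilpotent product. The one cosmetic difference is that you absorb the minus sign in $\tilde{\theta}(l)(T)=\rho_1(l)T-T\rho_2(l)$ into the Lie algebra isomorphism $\phi(l_1,l_2)=(l_1,-l_2)$ before restricting to a subalgebra, whereas the paper's section~8 preamble instead bakes it into the auxiliary representation $\nu=-\rho_2$ of $L^{op}$ and would then need (implicitly) the translations $\sigma_\ast(\nu)=-\sigma_\ast(\rho_2)$ and $h_{L^{op}}=-h_L$ (both trivially $0$ since $L$ is nilpotent); the two devices are equivalent, and yours has the modest advantage of letting Theorems 7.2 and 7.3 be invoked directly for the original $\rho_2$ with no translation step.
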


\begin{proof}
 
\indent The theorem may be proved by a similar argument 
to the one in Theorem 8.1, using Theorems 7.2 and 7.3
instead of Theorems 5.1 and 6.2.\par

\end{proof}

\indent Finally, Theorems 8.1 and 8.2 provide an extension of two 
of the main results in Chapter 3, Section 3.8 of [21] for the 
tensor product introduced in [14].\par

\bibliographystyle{amsplain}

\vskip.5cm

\noindent Enrico Boasso\par
\noindent E-mail address: enrico\_odisseo@yahoo.it

\end{document}